\newlength{\standardunitlength}
\newtheorem{prop}{Proposition}[section]
\newtheorem{lemma}[prop]{Lemma}
\newtheorem{cor}[prop]{Corollary}
\newtheorem{theorem}[prop]{Theorem}
\newtheorem{remark}[prop]{Remark}
\numberwithin{equation}{section}
\renewcommand{\setminus}{\smallsetminus}
\newcommand{\F}{\mathbb{F}}
\newcommand{\GL}{\mathrm{GL}}
\newcommand{\GU}{\mathrm{GU}}
\newcommand{\Sp}{\mathrm{Sp}}
\newcommand{\GLnq}{\mathrm{GL}_n(q)}
\newcommand{\GLtwoq}{\mathrm{GL}_2(q)}
\newcommand{\SLtwoq}{\mathrm{SL}_2(q)}
\newcommand{\GUnq}{\mathrm{GU}_n(q)}
\newcommand{\GUkq}{\mathrm{GU}_k(q)}
\newcommand{\GUiq}{\mathrm{GU}_i(q)}
\newcommand{\Spnq}{\mathrm{Sp}_{2n}(q)}
\newcommand{\Spkq}{\mathrm{Sp}_{2k}(q)}
\newcommand{\Spiq}{\mathrm{Sp}_{2i}(q)}
\newcommand{\Spfourq}{\mathrm{Sp}_4(q)}
\newcommand{\PV}{\mathsf{P}}
\newcommand{\QV}{\mathsf{Q}}
\newcommand{\EV}{\mathsf{E}}
\newcommand{\Var}{\mathsf{Var}}
\newcommand{\Irr}{\mathrm{Irr}}
\newcommand{\triv}{{\mathds{1}}}
\newcommand{\Id}{\mathrm{Id}}
\begin{document}

\title{Tensor product Markov chains and Weil representations}

\author{Jason Fulman}

\address{Department of Mathematics\\
       University of Southern California\\
       Los Angeles, CA, 90089, USA}
\email{fulman@usc.edu}

\author{Michael Larsen}
\address{Department of Mathematics\\
    Indiana University\\
    Bloomington, IN 47405\\
    U.S.A.}
\email{mjlarsen@indiana.edu}

\author{Pham Huu Tiep}
\address{Department of Mathematics\\ Rutgers University\\ Piscataway, NJ 08854\\ U.S.A.}
\email{tiep@math.rutgers.edu}

\thanks{The first author was supported by Simons Foundation grant 917224. The second author was supported by NSF grant DMS-2401098.
The third author gratefully acknowledges the support of the NSF (grant DMS-2200850) and the Joshua Barlaz Chair in Mathematics.}

\thanks{{\it 2020 AMS Subject Classification}: 20C33, 05E10, 60J10 }

\keywords{Weil representation, Plancherel measure, Markov chain, transvection}

\date{July 16, 2024}

\begin{abstract} We obtain sharp bounds on the convergence rate of Markov chains on irreducible representations of
finite general linear, unitary, and symplectic groups (in both odd and even characteristic) given by tensoring with Weil representations. 
\end{abstract}

\maketitle

\tableofcontents

\section{Introduction} In recent years there has been much work on convergence rates of tensor product Markov chains:
see \cite{BDLP}, \cite{F}, \cite{F2}, \cite{LT}, and the references therein. As explained in these papers, tensor product chains are connected to
many parts of mathematics: to the Burnside-Brauer theorem for building irreducible representations, to the McKay correspondence and McKay graphs,
to Pitman's 2M-X theorem, to quantum computing, and to using Stein's method to study Plancherel measure of finite groups. 

These tensor product Markov chains are defined as follows. The state space is $\Irr(G)$, the set of irreducible representations 
of a finite group $G$. Let $\eta$ be a fixed representation of $G$ (not necessarily irreducible or with a real-valued
character). Letting $d_{\lambda}$ be the
dimension of a representation $\lambda$ and letting $m_{\rho}(\lambda \otimes \eta)$ denote the multiplicity of $\rho$ in
the tensor product $\lambda \otimes \eta$, this Markov chain has transition probabilities
\begin{equation} \label{tensdefine} K(\lambda,\rho) = \frac{m_{\rho}(\lambda \otimes \eta) d_{\rho}}{d_{\lambda} d_{\eta}}.
\end{equation} The stationary distribution $\pi$ of $K$ in our examples is the Plancherel measure of $G$,
defined by $\pi(\rho)=\frac{d_{\rho}^2}{|G|}$.

In this paper we study the case that $\eta$ is a (reducible) {\it Weil representation} of a finite general linear, unitary, or symplectic
group. These representations were studied by A. Weil 
\cite{Weil} for classical groups over local fields, following work of Segal and Shale. Weil mentioned that 
the finite field case may be considered analogously. This was developed in 
detail by R. E. Howe \cite{Ho} and P. G\'erardin \cite{Ger}, for characteristic zero
representations. The same representations, still in characteristic zero, 
were introduced independently by 
I. M. Isaacs \cite{I1} and H. N. Ward \cite{Ward} 
for symplectic groups $\Spnq$ with $q$ odd, and by G. M. Seitz \cite{Sei} for unitary groups $\GUnq$. (These representations for $\mathrm{Sp}_{2n}(p)$ were also constructed in \cite{BRW}.) 
Weil representations of finite symplectic groups $\mathrm{Sp}_{2n}(q)$ with $2|q$ were constructed 
by R. M. Guralnick and the third author in \cite{GT}.
Weil representations attract much attention 
because of their many interesting features, cf. for instance 
\cite{Gr}, \cite{T}, \cite{TZ}, \cite{Zal}. 

The tensor product chains of Weil representations
can be diagonalized using representation theory (see \cite{F} or \cite{BDLP}), but this is not too helpful in the case that
$\eta$ is not real-valued.

In Section \ref{main} we provide a self-contained approach to convergence rates of tensor product chains which does not
rely on this diagonalization (see Theorem \ref{charbound}). For all the Weil representations $\eta$ studied here,
the absolute value of the corresponding character value $\chi^{\eta}(g)$ depends only on the dimension of the fixed space of $g$ (that is
the kernel of $g-1$). Combining this
with work of Rudvalis and Shinoda \cite{RS} on the proportion of elements in a finite classical group with fixed space of
a given dimension, we are able to get upper bounds on the convergence rate of the tensor product chain. To see that
these bounds are sharp (that is to get matching lower bounds), we need to study properties of the product of two
uniformly chosen random transvections. This turns out to be intricate, especially for odd characteristic symplectic groups
(for which transvections split into two conjugacy classes). We remark that our bounds show that these Markov chains
have a cutoff in the sense of \cite{Dia}.

We mention that in the case of $\GLnq$, the convergence rate of the Weil representation was analyzed in \cite{F}.
The calculations in our upper bound is similar, but the method in \cite{F} for studying the lower bound does
not seem to extend to other finite classical groups. So in this paper we provide a different approach to lower bounds,
based on studying the product of two random transvections. This approach does work for other finite classical groups.

In this paper we do not treat finite orthogonal groups. Our method of proof should carry over, but the calculations are
very similar, tedious, and repetitive (one needs to treat plus/minus type, odd/even dimension, and odd/even characteristic).
We also note that, in principle, the {\it level theory} as developed in \cite{GLT}, \cite{GLT2} also allows one to work out some upper and
lower bounds of convergence rates of tensor product chains, which however may not be as strong as the bounds proved in this paper.
 
To close the introduction, we mention a bit of notation. If $\PV$ is a probability measure on a finite set $X$
and $A$ is a subset of $X$, we let $\PV(A)$ be the probability of $A$. If $W$ is a random variable, we let $\EV_{\PV}(W)$
denote the expected value of $W$ under the probability distribution $P$, and we let $\Var_{\PV}(W)$ denote the variance
of $W$ under the probability distribution $P$.  We remind the reader that $\Var_{\PV}(W) = \EV_{\PV}(W^2) - (\EV_{\PV}(W))^2$.
Also, when proving lower bounds for convergence rates, we will frequently use Chebyshev's inequality, which states
that 
$$\PV(|W-\EV(W)| \geq k) \leq \frac{\Var(W)}{k^2}.$$ 
Some notation about Markov chains will appear in the next section.

\section{Tensor product Markov chains: general bounds} \label{main}

This section derives a general convergence rate upper bound for tensor product Markov chains and recalls a tool that
will be useful for obtaining lower bounds.

The total variation distance between probability distributions $\PV$ and $\QV$ on a finite set $X$ is given by
\[ ||\PV-\QV||_{\mathsf{TV}} = \frac{1}{2} \sum_{x \in X} |\PV(x)-\QV(x)| = \max_{A \subseteq X} |\PV(A)-\QV(A)|.\] We let $K_x^r$ be the distribution on $X$ after $r$ steps of the Markov chain $K$ started from $x$. (Our Markov chains will all be on the set of irreducible representations of a finite classical group, and will be started at the
trivial representation $\triv$, so we usually write $K^r$ instead of $K^r_{\triv}$). We  let $K^r(x,y)$ denote the chance of going from $x$ to $y$ in $r$ steps. We always let $\pi$ denote the stationary distribution of the Markov chain $K$. A distribution $\pi$ is called {\it stationary} if
\[ \pi(y) = \sum_x \pi(x) K(x,y) \] for all $y$. Since $X$ is finite, stationary distributions exist but are not always unique (see Chapter 1 of \cite{LPW}).
For our examples,
$\pi$ is Plancherel measure (and is unique) and $K^r \rightarrow \pi$
as $r \rightarrow \infty$. The Markov chain $K$ is said to be {\it reversible} if 
\[ \pi(x) K(x,y) = \pi(y) K(y,x) \] for all $x$ and $y$. The Markov chains studied in this paper are not reversible in
general, but Theorem \ref{charbound} shows that for our purposes they behave as if they were reversible.

We begin with a general known elementary bound and then consider tensor product Markov chains.

\begin{prop} \label{general} Let $K$ be a Markov chain on a finite set $X$ with stationary distribution $\pi$
satisfying the property that $\pi(y)>0$ for all $y$.

\begin{enumerate}[\rm(i)]
\item 
\[ 4 ||K_x^r - \pi||_{\mathsf{TV}}^2 \leq \sum_y \frac{K^r(x,y)^2}{\pi(y)} - 1.\]
\item
If the chain is reversible, then
\[ 4 ||K_x^r - \pi||_{\mathsf{TV}}^2 \leq \frac{K^{2r}(x,x)}{\pi(x)} - 1.\]
\end{enumerate}
\end{prop}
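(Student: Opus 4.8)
The plan is to deduce both parts from the Cauchy--Schwarz inequality applied to the chi-square distance between $K_x^r$ and $\pi$, together with an elementary identity valid in the reversible case.

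\textbf{Part (i).} First I would write $2\|K_x^r-\pi\|_{\mathsf{TV}} = \sum_y |K^r(x,y)-\pi(y)|$ and insert a factor $\sqrt{\pi(y)}/\sqrt{\pi(y)}$, which is legitimate since $\pi(y)>0$ for all $y$. Cauchy--Schwarz then gives
\[ 4\|K_x^r-\pi\|_{\mathsf{TV}}^2 \leq \Bigl(\sum_y \frac{(K^r(x,y)-\pi(y))^2}{\pi(y)}\Bigr)\Bigl(\sum_y \pi(y)\Bigr) = \sum_y \frac{(K^r(x,y)-\pi(y))^2}{\pi(y)}. \]
Expanding the square and using $\sum_y K^r(x,y) = \sum_y \pi(y) = 1$ collapses the right-hand side to $\sum_y \frac{K^r(x,y)^2}{\pi(y)} - 1$, which is the claimed bound.

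\textbf{Part (ii).} Here the extra input is the identity $\sum_y \frac{K^r(x,y)^2}{\pi(y)} = \frac{K^{2r}(x,x)}{\pi(x)}$ for reversible chains; granting it, (ii) is immediate from (i). To prove the identity I would first check, by induction on $r$, that detailed balance propagates to $r$-step transitions: $\pi(x)K^r(x,y) = \pi(y)K^r(y,x)$ for all $x,y$ and all $r\geq 1$. The base case is the definition of reversibility, and the inductive step rewrites $\pi(x)K^{r+1}(x,y) = \sum_z \pi(x)K^r(x,z)K(z,y)$, applies the inductive hypothesis to the first factor and one-step reversibility to $K(z,y)$, and recognizes the result as $\pi(y)K^{r+1}(y,x)$. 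Then
\[ K^{2r}(x,x) = \sum_y K^r(x,y)K^r(y,x) = \sum_y K^r(x,y)\cdot\frac{\pi(x)}{\pi(y)}K^r(x,y) = \pi(x)\sum_y \frac{K^r(x,y)^2}{\pi(y)}, \]
which rearranges to the identity.

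\textbf{Main obstacle.} There is no serious obstacle here: the whole proposition is elementary. The only point demanding a little care is the propagation of detailed balance to $r$-step transition probabilities used in part (ii); everything else is a direct computation with Cauchy--Schwarz and the normalizations $\sum_y K^r(x,y) = \sum_y \pi(y) = 1$.
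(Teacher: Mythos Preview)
Your proof is correct and follows essentially the same route as the paper: Cauchy--Schwarz for part (i), then the identity $\sum_y K^r(x,y)^2/\pi(y) = K^{2r}(x,x)/\pi(x)$ via $r$-step detailed balance for part (ii). You are in fact slightly more careful than the paper, which simply invokes ``reversibility'' for the step $K^r(x,y)/\pi(y) = K^r(y,x)/\pi(x)$ without spelling out the inductive propagation of detailed balance to $r$-step transitions.
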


\begin{proof} For the first assertion,
\begin{eqnarray*}
2 ||K_x^r - \pi||_{\mathsf{TV}} & = & \sum_y |K^r(x,y) - \pi(y)| \\
& = & \sum_y \frac{|K^r(x,y) - \pi(y)|}{\sqrt{\pi(y)}} \sqrt{\pi(y)}.
\end{eqnarray*} 

By the Cauchy-Schwarz inequality this is at most
\[ \sqrt{\sum_y \frac{|K^r(x,y) - \pi(y)|^2}{\pi(y)}} \sqrt{\sum_y \pi(y)} = \sqrt{\sum_y
 \frac{|K^r(x,y) - \pi(y)|^2}{\pi(y)}}.\] 
Squaring gives that
\[ 4 ||K_x^r - \pi||_{\mathsf{TV}}^2 \leq \sum_y
 \frac{|K^r(x,y) - \pi(y)|^2}{\pi(y)} =  \sum_y \frac{K^r(x,y)^2}{\pi(y)} - 1.\] 

For the second assertion, observe that
\begin{eqnarray*}
\sum_y \frac{K^r(x,y)^2}{\pi(y)} - 1 & = & \sum_y K^r(x,y) \frac{K^r(x,y)}{\pi(y)} -1 \\
& = & \sum_y K^r(x,y) \frac{K^r(y,x)}{\pi(x)} - 1 \\
& = & \frac{K^{2r}(x,x)}{\pi(x)} - 1,
\end{eqnarray*} where the second equality used reversibility.
\end{proof} 

We now specialize to the tensor product Markov chain $K$ defined in \eqref{tensdefine}.

Lemma \ref{used0} collects two useful facts. Note that it is not necessary to assume that $K$ is reversible.

\begin{lemma} \label{used0}
\begin{enumerate}[\rm(i)]
\item Letting $\triv$ denote the trivial representation, one has that
\begin{equation} \label{obvious}  K^r(\triv,\rho) = \frac{d_{\rho} m_{\rho} (\eta^r)}{d_{\eta}^r},
\end{equation}
where $m_{\rho} (\eta^r)$ denotes the multiplicity of $\rho$ in $\eta^r$.

\item A stationary distribution of this Markov chain is given by Plancherel measure
of the group $G$: \[ \pi(\rho) = \frac{d_{\rho}^2}{|G|}.\] 
\end{enumerate}
\end{lemma}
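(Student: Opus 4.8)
The plan is to prove (i) by induction on $r$, using the definition \eqref{tensdefine} of $K$ together with basic facts about multiplicities in tensor products. For the base case $r=0$ (or $r=1$, whichever is cleaner), observe that $K^1(\triv,\rho) = K(\triv,\rho) = m_\rho(\triv\otimes\eta)d_\rho/(d_\triv d_\eta) = m_\rho(\eta)d_\rho/d_\eta$, since $\triv\otimes\eta\cong\eta$ and $d_\triv=1$; this matches the claimed formula. For the inductive step, I would write $K^{r+1}(\triv,\rho)=\sum_\lambda K^r(\triv,\lambda)K(\lambda,\rho)$, substitute the inductive hypothesis $K^r(\triv,\lambda)=d_\lambda m_\lambda(\eta^r)/d_\eta^r$ and the definition $K(\lambda,\rho)=m_\rho(\lambda\otimes\eta)d_\rho/(d_\lambda d_\eta)$, so that the $d_\lambda$ cancels and one is left with $\frac{d_\rho}{d_\eta^{r+1}}\sum_\lambda m_\lambda(\eta^r)\,m_\rho(\lambda\otimes\eta)$. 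The key identity to invoke is that $\sum_\lambda m_\lambda(\eta^r)\,m_\rho(\lambda\otimes\eta)=m_\rho(\eta^{r+1})$: decomposing $\eta^r=\bigoplus_\lambda \lambda^{\oplus m_\lambda(\eta^r)}$ and tensoring with $\eta$ gives $\eta^{r+1}=\bigoplus_\lambda (\lambda\otimes\eta)^{\oplus m_\lambda(\eta^r)}$, and collecting the multiplicity of $\rho$ on both sides yields exactly this. This closes the induction.

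For (ii), the plan is a direct verification that $\pi(\rho)=d_\rho^2/|G|$ satisfies $\pi(\rho)=\sum_\lambda \pi(\lambda)K(\lambda,\rho)$. Substituting, the right-hand side is $\sum_\lambda \frac{d_\lambda^2}{|G|}\cdot\frac{m_\rho(\lambda\otimes\eta)d_\rho}{d_\lambda d_\eta}=\frac{d_\rho}{|G|\,d_\eta}\sum_\lambda d_\lambda\, m_\rho(\lambda\otimes\eta)$. By Frobenius reciprocity (or by symmetry of tensor product multiplicities, $m_\rho(\lambda\otimes\eta)=m_\lambda(\rho\otimes\eta^*)$ where $\eta^*$ is the dual), one has $\sum_\lambda d_\lambda\, m_\rho(\lambda\otimes\eta)=d_{\rho\otimes\eta^*}=d_\rho d_\eta$, since $\sum_\lambda d_\lambda m_\lambda(\sigma)=d_\sigma$ for any representation $\sigma$. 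Hence the right-hand side collapses to $\frac{d_\rho}{|G|\,d_\eta}\cdot d_\rho d_\eta = \frac{d_\rho^2}{|G|}=\pi(\rho)$, as desired. Alternatively, one notes that $\sum_\lambda d_\lambda\,m_\rho(\lambda\otimes\eta)=\dim\mathrm{Hom}_G(\rho,(\bigoplus_\lambda \lambda^{\oplus d_\lambda})\otimes\eta)=\dim\mathrm{Hom}_G(\rho,\C[G]\otimes\eta)$, and $\C[G]\otimes\eta\cong\C[G]^{\oplus d_\eta}$ since the regular representation absorbs tensor factors, giving $d_\rho d_\eta$.

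The computations here are entirely routine; the only point requiring any care is the bookkeeping with the tensor-product multiplicity identity $m_\rho(\eta^{r+1})=\sum_\lambda m_\lambda(\eta^r)m_\rho(\lambda\otimes\eta)$ in part (i) and the "regular representation absorbs tensor products" fact in part (ii) — both are standard, and neither is a genuine obstacle. I would simply present the induction for (i) and the one-line verification for (ii).
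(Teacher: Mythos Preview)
Your proof is correct and follows the same overall structure as the paper's: induction on $r$ for part (i) and direct verification of the stationarity equation for part (ii). The only difference is cosmetic: where you invoke the representation-theoretic identities $\sum_\lambda m_\lambda(\eta^r)m_\rho(\lambda\otimes\eta)=m_\rho(\eta^{r+1})$ and $\sum_\lambda d_\lambda m_\rho(\lambda\otimes\eta)=d_\rho d_\eta$ directly (via tensor decomposition and the dual/regular-representation trick), the paper instead expands each multiplicity as a character sum and applies the orthogonality relations $\sum_\tau \overline{\chi^\tau(g)}\chi^\tau(h)$ and $\sum_\lambda d_\lambda \chi^\lambda(g)$ to reach the same conclusions. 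Your route is slightly cleaner; the paper's is more explicit but amounts to reproving the identities you quote.
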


\begin{proof} For the first part we use induction on $r$. If $\chi^\rho$ denotes the character of the representation $\rho$, then we have
\begin{eqnarray*}
K^r(\triv,\rho) & = & \sum_{\tau} K^{r-1}(\triv,\tau) K(\tau,\rho) \\
& = & \sum_{\tau} \frac{d_{\tau} m_{\tau}(\eta^{r-1})}{d_{\eta}^{r-1}} \frac{d_{\rho} m_{\rho}(\eta \otimes \tau)}{d_{\tau}
d_{\eta}}\\
& = & \frac{d_{\rho}}{d_{\eta}^r} \sum_{\tau} m_{\tau}(\eta^{r-1}) m_{\rho}(\eta \otimes \tau) \\
& = & \frac{d_{\rho}}{d_{\eta}^r} \sum_{\tau} \frac{1}{|G|} \sum_{g \in G} \chi^{\eta}(g)^{r-1} \overline{\chi^{\tau(g)}}
\frac{1}{|G|} \sum_{h \in G} \chi^{\eta}(h) \chi^{\tau}(h) \overline{\chi^{\rho}(h)} \\
& = & \frac{d_{\rho}}{d_{\eta}^r} \frac{1}{|G|} \frac{1}{|G|} \sum_{g \in G} \sum_{h \in G} \chi^{\eta}(g)^{r-1} \chi^{\eta}(h)
\overline{\chi^{\rho(h)}} \sum_{\tau} \overline{\chi^{\tau}(g)} \chi^{\tau}(h).
\end{eqnarray*} Using the orthogonality relation for irreducible characters and letting $C(g)$ denote the centralizer of $g$, this is equal to
\begin{eqnarray*}
 \frac{d_{\rho}}{d_{\eta}^r} \frac{1}{|G|} \sum_{g \in G} \frac{1}{|C(g)|} \chi^{\eta}(g)^{r-1} \chi^{\eta}(g) \overline{\chi^{\rho}(g)} |C(g)| & = & \frac{d_{\rho}}{d_{\eta}^r} \frac{1}{|G|} \sum_{g \in G} \chi^{\eta}(g)^{r} \overline{\chi^{\rho}(g)} \\
& = &  \frac{d_{\rho} m_{\rho} (\eta^r)}{d_{\eta}^r}.
\end{eqnarray*}

For the second part, it is necessary to verify that
\[ \sum_{\lambda} \pi(\lambda) K(\lambda,\rho) = \pi(\rho).\]
One calculates that
\begin{eqnarray*}
\sum_{\lambda} \pi(\lambda) K(\lambda,\rho) & = & \sum_{\lambda} \frac{d_{\lambda}^2}{|G|} \left[ \frac{1}{|G|} \sum_{g} \chi^{\lambda}(g) \chi^{\eta}(g)
\overline{\chi^{\rho}(g)} \right] \frac{d_{\rho}}{d_{\lambda} d_{\eta}} \\
& = & \frac{d_{\rho}}{d_{\eta}} \frac{1}{|G|} \sum_g \chi^{\eta}(g) \overline{\chi^{\rho}(g)} \frac{1}{|G|} \sum_{\lambda} d_{\lambda}
\chi^{\lambda}(g).
\end{eqnarray*} By the orthogonality relation for irreducible characters, this simplies to $d_{\rho}^2/|G|$.
\end{proof}

Note that in the next theorem, $K$ does not have to be reversible.

\begin{theorem} \label{charbound} Consider the Markov chain on irreducible representations of $G$ given by tensoring
with $\eta$ and starting at the trivial representation $\triv$. Then \[ 4 ||K^r_{\triv} - \pi ||_{\mathsf{TV}}^2 \leq \sum_{C \neq \{\Id\}} |C|
\left| \frac{\chi^{\eta}(C)}{d_{\eta}} \right|^{2r} ,\] where the sum is over all 
non-identity conjugacy classes $C$ of $G$.
\end{theorem}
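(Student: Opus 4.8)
The plan is to apply Proposition~\ref{general}(i) with $x = \triv$ and then simplify the right-hand side $\sum_\rho K^r(\triv,\rho)^2/\pi(\rho) - 1$ using the two identities recorded in Lemma~\ref{used0}. First I would substitute $K^r(\triv,\rho) = d_\rho m_\rho(\eta^r)/d_\eta^r$ from \eqref{obvious} and $\pi(\rho) = d_\rho^2/|G|$; the factors of $d_\rho$ cancel nicely, leaving
\[ \sum_\rho \frac{K^r(\triv,\rho)^2}{\pi(\rho)} - 1 = \frac{|G|}{d_\eta^{2r}} \sum_\rho m_\rho(\eta^r)^2 - 1. \]

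Next I would interpret $\sum_\rho m_\rho(\eta^r)^2$ representation-theoretically: it is the inner product $\langle \chi^{\eta^r}, \chi^{\eta^r}\rangle$, i.e. the dimension of the space of $G$-endomorphisms of $\eta^{\otimes r}$. Writing this out via the orthogonality relations, $\sum_\rho m_\rho(\eta^r)^2 = \frac{1}{|G|}\sum_{g \in G} |\chi^\eta(g)|^{2r}$, since $\chi^{\eta^r}(g) = \chi^\eta(g)^r$ and hence $\chi^{\eta^r}(g)\overline{\chi^{\eta^r}(g)} = |\chi^\eta(g)|^{2r}$. Substituting, the $|G|$ factors cancel and we get
\[ \sum_\rho \frac{K^r(\triv,\rho)^2}{\pi(\rho)} - 1 = \frac{1}{d_\eta^{2r}} \sum_{g \in G} |\chi^\eta(g)|^{2r} - 1 = \sum_{g \in G} \left|\frac{\chi^\eta(g)}{d_\eta}\right|^{2r} - 1. \]

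Finally I would peel off the identity element: its contribution is $|\chi^\eta(\Id)/d_\eta|^{2r} = 1$, which cancels the $-1$, and then group the remaining terms of $\sum_{g \neq \Id}$ by conjugacy class, using that $|\chi^\eta(g)/d_\eta|$ is a class function to write $\sum_{g \neq \Id} = \sum_{C \neq \{\Id\}} |C| \cdot |\chi^\eta(C)/d_\eta|^{2r}$. Combining with the factor $4$ from Proposition~\ref{general}(i) gives the claimed bound. I do not anticipate a serious obstacle here: every step is an application of orthogonality of characters or a bookkeeping rearrangement, and the only mild subtlety is being careful with complex conjugates (since $\eta$ need not be real-valued, one must keep $|\chi^\eta(g)|^{2}$ rather than $\chi^\eta(g)^2$ throughout, which is exactly why the statement is phrased with absolute values).
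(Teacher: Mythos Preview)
Your proof is correct and follows essentially the same approach as the paper: both start from Proposition~\ref{general}(i), plug in the formulas of Lemma~\ref{used0}, and reduce via character orthogonality to a sum over non-identity classes. Your version is a touch more streamlined---you recognize $\sum_\rho m_\rho(\eta^r)^2 = \langle \chi^{\eta^r},\chi^{\eta^r}\rangle$ directly, whereas the paper expands each $m_\rho(\eta^r)$ as a character sum over $g_1,g_2$ and then collapses via the second orthogonality relation---but the underlying argument is the same.
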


\begin{proof} By \eqref{obvious},
\[ K^r(\triv,\rho) = \frac{d_{\rho}}{d_{\eta}^r} \frac{1}{|G|} \sum_g \chi^{\eta}(g)^r
\overline{\chi^{\rho}(g)}.\]

Hence
\begin{eqnarray*}
 \sum_{\rho} \frac{K^r(\triv,\rho)^2}{\pi(\rho)}
& = & \sum_{\rho} \frac{|G|}{d_{\rho}^2} \frac{d_{\rho}^2}{d_{\eta}^{2r}} \frac{1}{|G|^2}
\sum_{g_1} [\chi^{\eta}(g_1)]^r \overline{\chi^{\rho}(g_1)}
\sum_{g_2} [\overline{\chi^{\eta}(g_2)}]^r \chi^{\rho}(g_2) \\
& = & \frac{1}{|G|} \frac{1}{d_{\eta}^{2r}} \sum_{\rho} \sum_{C_1} |C_1| [\chi^{\eta}(C_1)]^r
\overline{\chi^{\rho}(C_1)} \sum_{C_2} |C_2| [\overline{\chi^{\eta}(C_2)}]^r \chi^{\rho}(C_2) \\
& = & \frac{1}{d_{\eta}^{2r}} \sum_{C_1} |C_1| \chi^{\eta}(C_1)^r \sum_{C_2} |C_2|
[\overline{\chi^{\eta}(C_2)}]^r \frac{1}{|G|} \sum_{\rho} \overline{\chi^{\rho}(C_1)} \chi^{\rho}(C_2).
\end{eqnarray*} By the orthogonality relation for irreducible characters, this is equal to 
\[ \frac{1}{d_{\eta}^{2r}} \sum_C |C|^2 \chi^{\eta}(C)^r [\overline{\chi^{\eta}(C)}]^r \frac{1}{|C|}
 =  \frac{1}{d_{\eta}^{2r}} \sum_C |C| |\chi^{\eta}(C)|^{2r}.\] 

The result now follows from part 1 of Theorem \ref{general}.
\end{proof}

Finally, we mention the following proposition from \cite{F} which, together with enumerative results about the product of transvections,
will be useful for lower bounding the convergence rates of our Markov chains. A nice survey of techniques for proving lower bounds
for finite Markov chains is given by Saloff-Coste \cite{Sal}.

\begin{prop} \label{combine} Let $C$ be a conjugacy class of a finite group $G$. Let $f_C$ be the function on irreducible
representations of $G$ defined by \[ f_C(\rho) = \frac{|C|^{1/2} \chi^{\rho}(C)}{d_{\rho}}.\] Define $\EV_{K^r}[(f_C)^s]$ to
be the expected value of the function $(f_C)^s$ after $r$ steps of the random walk $K$ started at the trivial representation. Let
$p_{s,C}(T)$ be the probability that the random walk on $G$ generated by $C$ and started at the identity is in the
conjugacy class $T$ after $s$ steps. Then
\[ \EV_{K^r}[(f_C)^s] = |C|^{s/2} \sum_T p_{s,C}(T) \left( \frac{\chi^{\eta}(T)}{d_{\eta}} \right)^r,\] where the
sum is over all conjugacy classes $T$ of $G$.
\end{prop}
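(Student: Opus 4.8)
The plan is to expand the expectation as a sum over $\Irr(G)$, substitute the explicit formula for $K^r(\triv,\rho)$ from Lemma~\ref{used0}(i), and then rewrite the factor $\chi^{\rho}(C)^s$ using the arithmetic of the center of the group algebra $\C G$. Writing $\widehat{C}=\sum_{c\in C}c\in\C G$ for the class sum, I would first record the standard facts that $\widehat{C}$ acts on an irreducible representation $\rho$ as the scalar $\omega_{\rho}(\widehat{C}):=|C|\chi^{\rho}(C)/d_{\rho}$ (Schur's lemma, followed by taking the trace), and that $\rho\mapsto\omega_{\rho}$ is an algebra homomorphism on $Z(\C G)$.

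Next I would relate powers of $\widehat{C}$ to the random walk generated by $C$. Expanding $\widehat{C}^{\,s}=\sum_{t\in G}N_{s,C}(t)\,t$, the coefficient $N_{s,C}(t)$ counts the tuples $(c_1,\dots,c_s)\in C^s$ with $c_1\cdots c_s=t$; since $\widehat{C}^{\,s}\in Z(\C G)$, this coefficient depends only on the conjugacy class $T\ni t$, say $N_{s,C}(T)$, so the number of tuples whose product lies in $T$ is $|T|\,N_{s,C}(T)$, and therefore $p_{s,C}(T)=|T|\,N_{s,C}(T)/|C|^s$. Applying $\omega_{\rho}$ to $\widehat{C}^{\,s}=\sum_T N_{s,C}(T)\,\widehat{T}$ (with $\widehat{T}$ the class sum of $T$) and using multiplicativity gives $\bigl(|C|\chi^{\rho}(C)/d_{\rho}\bigr)^s=\sum_T N_{s,C}(T)\,|T|\chi^{\rho}(T)/d_{\rho}$, which after dividing by $|C|^s$ and clearing one factor of $d_\rho$ is exactly
\[ \frac{\chi^{\rho}(C)^s}{d_{\rho}^{\,s-1}}=\sum_T p_{s,C}(T)\,\chi^{\rho}(T). \]

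Finally I would assemble the pieces. By definition $\EV_{K^r}[(f_C)^s]=\sum_{\rho}K^r(\triv,\rho)\,(f_C(\rho))^s$; inserting $K^r(\triv,\rho)=d_{\rho}m_{\rho}(\eta^r)/d_{\eta}^{\,r}$ from Lemma~\ref{used0}(i) together with $(f_C(\rho))^s=|C|^{s/2}\chi^{\rho}(C)^s/d_{\rho}^{\,s}$ gives
\[ \EV_{K^r}[(f_C)^s]=\frac{|C|^{s/2}}{d_{\eta}^{\,r}}\sum_{\rho}m_{\rho}(\eta^r)\,\frac{\chi^{\rho}(C)^s}{d_{\rho}^{\,s-1}}=\frac{|C|^{s/2}}{d_{\eta}^{\,r}}\sum_T p_{s,C}(T)\sum_{\rho}m_{\rho}(\eta^r)\,\chi^{\rho}(T), \]
where the second equality uses the displayed identity and interchanges the two finite sums. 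Since $\sum_{\rho}m_{\rho}(\eta^r)\chi^{\rho}$ is the character of $\eta^r$, namely $(\chi^{\eta})^r$, the inner sum equals $\chi^{\eta}(T)^r$, and pulling $d_{\eta}^{\,r}$ back inside yields $|C|^{s/2}\sum_T p_{s,C}(T)\bigl(\chi^{\eta}(T)/d_{\eta}\bigr)^r$, as claimed.

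The only real content is the bookkeeping in the middle step: one must match the normalizations so that the coefficients of $\widehat{C}^{\,s}=\sum_T N_{s,C}(T)\widehat{T}$, after division by $|C|^s$ and multiplication by $|T|$, reproduce exactly the walk probabilities $p_{s,C}(T)$ — everything else is substitution, the homomorphism property of central characters, and the additivity of characters under direct sums applied to the decomposition of $\eta^r$. Note that reversibility of $K$ is never used, and that the case $s=1$ (where $p_{1,C}(C)=1$) recovers the identity $\EV_{K^r}[f_C]=|C|^{1/2}(\chi^{\eta}(C)/d_{\eta})^r$ that underlies the lower-bound arguments.
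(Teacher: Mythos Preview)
Your proof is correct and is essentially the same argument as the one the paper uses (the paper cites Proposition~\ref{combine} from \cite{F} without proof, but its proof of the variant Proposition~\ref{modify} follows exactly this route). The only cosmetic difference is packaging: where you invoke the central-character homomorphism $\omega_\rho$ on $Z(\C G)$ to derive $\chi^\rho(C)^s/d_\rho^{\,s-1}=\sum_T p_{s,C}(T)\chi^\rho(T)$ and then use $\sum_\rho m_\rho(\eta^r)\chi^\rho=(\chi^\eta)^r$, the paper first expands $m_\rho(\eta^r)$ as an inner product over $G$ and then cites ``Fourier analysis (page~471 of \cite{Stan})'' for the same identity --- these are two ways of saying the same thing.
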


We remark that Proposition \ref{combine} does not require that $C=C^{-1}$. However we will apply Chebyshev's inequality
to study the function $f_C$, so it is important that $f_C$ is real-valued, which holds if $C=C^{-1}$, and so for 
the class of transvections for $\GLnq,\GUnq$ and even characteristic $\Spnq$. For odd characteristic $\Spnq$ there are
two conjugacy classes of transvections. If $q$ is congruent to 1 mod 4, and $C$ is one of these classes of transvections,
then $f_C$ is real-valued and Proposition \ref{combine} will be helpful. If $q$ is congruent to 3 mod 4, then $f_C$ is not
real-valued, so we will prove a modification of Proposition \ref{combine} and study the function $f_*$ defined by
\[ f_*(\rho)= \frac{|C_1|^{1/2} \chi^{\rho}(C_1)}{\sqrt{2} d_{\rho}} +  \frac{|C_2|^{1/2} \chi^{\rho}(C_2)}{\sqrt{2} d_{\rho}},\]
where $C_1,C_2$ are the two conjugacy classes of transvections. Note that $f_*$ is real-valued since $C_2=C_1^{-1}$.

\section{Weil representations of general linear groups}

The paper \cite{F} studied the tensor product random walk on the irreducible representations of $\GLnq$ given by tensoring
with $\eta$, where $\eta$ has character $q^{\dim(\ker(g-1))}$ (the exponent of $q$ is the dimension of the kernel of $g-1$).
This $\eta$ is called the Weil representation. A main result of that paper of \cite{F} was 
\begin{theorem} \label{sharp}
\begin{enumerate}[\rm(i)]
\item If $r=n+c$ with $c>0$, then $||K_{\triv}^r-\pi||_{\mathsf{TV}} \leq \frac{1}{2q^c}$.
\item If $r=n-c$ with $c>0$, then  $||K_{\triv}^r-\pi||_{\mathsf{TV}} \geq 1 - \frac{a}{q^c}$, where $a$ is a
universal constant (independent of $n,q,c$).
\end{enumerate}
\end{theorem}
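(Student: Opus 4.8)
The plan is to prove the two halves by different routes: part (i) directly from the character bound of Theorem~\ref{charbound}, and part (ii) by exploiting that for $G=\GLnq$ the Weil representation is a permutation module. For part (i): here $d_\eta=q^n$ and, for $g$ in a conjugacy class $C$, $|\chi^\eta(C)/d_\eta|=q^{-k(C)}$ with $k(C):=n-\dim\ker(g-\Id)=\mathrm{rank}(g-\Id)\ge 1$ for $C\ne\{\Id\}$. Grouping the sum in Theorem~\ref{charbound} by the value of $k$ and writing $M_k$ for the number of $g\in G$ with $\mathrm{rank}(g-\Id)=k$, one gets $4\|K^r_\triv-\pi\|_{\mathsf{TV}}^2\le\sum_{k\ge 1}M_k q^{-2rk}$. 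Since $g\mapsto g-\Id$ embeds $\{\mathrm{rank}(g-\Id)=k\}$ into the rank-$k$ matrices, $M_k\le\qb{n}{k}\prod_{i=0}^{k-1}(q^n-q^i)\le c_q\,q^{2nk-k^2}$ with $c_q:=\prod_{i\ge1}(1-q^{-i})^{-1}<3.5$, and for the dominant term $M_1<q^{2n-1}$ (invertibility of $g$ removes the singular rank-one perturbations). With $r=n+c$ the exponent of $q$ in the $k$-th term is $-k^2-2ck$, so the bound is $<q^{-1-2c}+c_q\sum_{k\ge 2}q^{-k^2-2ck}$; a one-line geometric estimate shows this is $<q^{-2c}$, and taking square roots gives $\|K^r_\triv-\pi\|_{\mathsf{TV}}<1/(2q^c)$. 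The only work here is bookkeeping the geometric series with the right constant.

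For part (ii): since $\chi^\eta(g)=q^{\dim\ker(g-\Id)}$ is the number of fixed points of $g$ on $\F_q^n$, we have $\eta=\C[\F_q^n]$ and hence $\eta^r=\C[(\F_q^n)^r]$ as $\GLnq$-modules, so $K^r_\triv$ is supported on the irreducible constituents of $\C[(\F_q^n)^r]$. First I would record two elementary facts about the $\GLnq$-action on $(\F_q^n)^r$ (left multiplication on $n\times r$ matrices): the orbit of a matrix $A$ is determined by $\ker(A)\subseteq\F_q^r$, a representative of rank $k$ has stabiliser conjugate to $H_k:=$ the pointwise stabiliser of a $k$-dimensional subspace of $\F_q^n$, and for $r\le n$ all ranks $0\le k\le r$ occur; and, choosing a flag $U_0\subset\cdots\subset U_r$ so that $H_r\subseteq H_{r-1}\subseteq\cdots\subseteq H_0=G$, the $H_k$-module $\C[H_k/H_r]$ contains $\triv$ with multiplicity one (transitivity), so $\mathrm{Ind}_{H_k}^G\triv$ is a summand of $\mathrm{Ind}_{H_r}^G\triv$. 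Consequently the constituents of $\eta^r=\bigoplus_k(\text{copies of }\mathrm{Ind}_{H_k}^G\triv)$ are exactly those of $\C[G/H_r]=\mathrm{Ind}_{H_r}^G\triv$ — equivalently, one checks via Jacquet modules that these are precisely the $\rho$ whose cuspidal support contains the trivial character of $\F_q^\times$ with multiplicity $\ge n-r$.

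Now take $r=n-c$ and let $P$ be the parabolic of $\GLnq$ with Levi $L=\mathrm{GL}_{n-c}(q)\times\mathrm{GL}_c(q)$ and unipotent radical $U_P$ of order $q^{c(n-c)}$, so $H_{n-c}=U_P\rtimes\mathrm{GL}_c(q)$ and $\C[G/H_{n-c}]=R_P^G\!\big(\C[\mathrm{GL}_{n-c}(q)]\boxtimes\triv\big)=\bigoplus_{\psi\in\Irr(\mathrm{GL}_{n-c}(q))}d_\psi\,R_P^G(\psi\boxtimes\triv)$, where $R_P^G$ is Harish--Chandra induction. Thus every constituent of $\eta^{n-c}$ occurs in some $R_P^G(\psi\boxtimes\triv)$, a module of dimension $[G:P]d_\psi$; writing $S_c\subseteq\Irr(G)$ for the set of these constituents (which is exactly the support of $K^{n-c}_\triv$),
\[
\sum_{\rho\in S_c}d_\rho^2\;\le\;\sum_\psi\Big(\textstyle\sum_{\rho\preceq R_P^G(\psi\boxtimes\triv)}d_\rho\Big)^2\;\le\;\sum_\psi\big([G:P]d_\psi\big)^2\;=\;[G:P]^2\,|\mathrm{GL}_{n-c}(q)|.
\]
Dividing by $|G|=[G:P]\,q^{c(n-c)}\,|\mathrm{GL}_c(q)|\,|\mathrm{GL}_{n-c}(q)|$ and using $[G:P]=\qb{n}{c}\le c_q\,q^{c(n-c)}$ collapses this to $\pi(S_c)\le c_q/|\mathrm{GL}_c(q)|$, and a short case check ($c=1$, $c=2$, $c\ge3$) shows $q^c\cdot c_q/|\mathrm{GL}_c(q)|\le 7$ for all $q\ge2$. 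Since $K^{n-c}_\triv(\Irr(G)\setminus S_c)=0$, taking $A=\Irr(G)\setminus S_c$ gives $\|K^{n-c}_\triv-\pi\|_{\mathsf{TV}}\ge\pi(A)-K^{n-c}_\triv(A)=1-\pi(S_c)\ge 1-7/q^c$, i.e.\ (ii) with $a=7$ (the bound being vacuous, hence trivially true, when $7/q^c\ge1$).

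The main obstacle in (ii) is the orbit/Jacquet-module bookkeeping that pins down the support of $K^r_\triv$ exactly, together with using the efficient estimate $\sum_{\rho\in S_c}d_\rho^2\le[G:P]^2|\mathrm{GL}_{n-c}(q)|$ rather than the naive $([G:H_{n-c}])^2$, which is too weak by a factor $|\mathrm{GL}_{n-c}(q)|$ and only yields the lower bound for $c<n/2$. I note that for the other finite classical groups treated in this paper $\eta$ is not a permutation module, and there one instead proves the analogue of (ii) via Proposition~\ref{combine} with $C$ the class of transvections: computing $\EV_{K^r}[f_C]$ and $\EV_{K^r}[(f_C)^2]$ from the distribution of $\mathrm{rank}(t_1t_2-\Id)$ for a product of two random transvections, and applying Chebyshev's inequality to $f_C$ — but for $\GLnq$ the permutation-module argument above is both shorter and sharp.
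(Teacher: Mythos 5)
The paper does not actually prove Theorem~\ref{sharp}: it quotes it from \cite{F}, and then supplies Theorem~\ref{goalGL1} as an alternative proof of a version of part~(ii), obtained via Proposition~\ref{combine}, the transvection count of Theorem~\ref{gl-transv}, and Chebyshev --- precisely because, as the introduction explains, the method of \cite{F} does not transfer to the other classical groups. Your part~(i) runs along the same lines as the paper's upper bounds for $\GUnq$ and $\Spnq$ (Theorem~\ref{charbound} plus a count of elements whose fixed space has prescribed codimension), except that you replace the Rudvalis--Shinoda enumeration with a direct rank-$k$ estimate of $M_k$; the geometric-series bookkeeping with $M_1<q^{2n-1}$ and $M_k\le c_q q^{2nk-k^2}$ does give $4\|K^r-\pi\|^2_{\mathsf{TV}}\le q^{-2c}$ for integer $c\ge1$, so part~(i) is fine. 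Your part~(ii) is a genuinely different route from the paper's Theorem~\ref{goalGL1}: you exploit that $\eta=\C[\F_q^n]$ is a permutation module, identify the orbit stabilizers on $(\F_q^n)^{n-c}$ and note that nesting the pointwise stabilizers $H_k$ forces every constituent of $\eta^{n-c}$ to appear in $\mathrm{Ind}_{H_{n-c}}^G\triv$, and then bound the Plancherel mass of that module's constituents through the parabolic $P$ with Levi $\GL_{n-c}(q)\times\GL_c(q)$. The computation $\pi(S_c)\le \qb{n}{c}/\bigl(q^{c(n-c)}|\GL_c(q)|\bigr)$ is correct, and your $a=7$ even has slack (a tighter version of the same bound gives $a=4$). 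What each approach buys: yours recovers the sharp $1-a/q^c$ of \cite{F} with a universal $a$, whereas the paper's transvection argument (Theorem~\ref{goalGL1} together with the remark after it) only yields $1-(11.25q^{-2c+2}+18q^{-c+1})$, which is weaker by a factor of $q$ in the error term; on the other hand the transvection method applies verbatim to $\GUnq$ and $\Spnq$, where $\eta$ is not a permutation module and your support argument has no analogue --- a trade-off you correctly identify. I found no gaps; the parenthetical Jacquet-module characterization of the support is decorative and can be dropped without loss.
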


Although Theorem \ref{sharp} is a nice result, the proof technique for the lower bound does not seem to easily extend to the other finite
classical groups. Our purpose here is to give a proof which does extend.

\begin{theorem} \label{goalGL1} Let $K_{\triv}^r$ be the distribution on $\Irr(\GLnq)$ given by r steps of tensoring with the
Weil representation started at the trivial representation. Then given $\epsilon>0$, there exists $c>0$ (depending only on $\epsilon$) such that
\[ ||K_{\triv}^r - \pi||_{\mathsf{TV}} \geq 1 - \epsilon \] for $r=n-c$ and sufficiently large $n$. 
\end{theorem}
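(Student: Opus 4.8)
The plan is to use Proposition \ref{combine} with $C$ the conjugacy class of transvections in $\GLnq$, together with Chebyshev's inequality, to show that the distribution $K_{\triv}^r$ for $r = n-c$ is concentrated on a set that has small $\pi$-measure. Concretely, I would take the test function $f_C(\rho) = |C|^{1/2}\chi^\rho(C)/d_\rho$ and estimate its first two moments under $K^r$ using Proposition \ref{combine} with $s=1$ and $s=2$. For this I need two ingredients: (a) the character ratio $\chi^\eta(T)/d_\eta = q^{\dim\ker(t-1)}/q^n = q^{-\mathrm{codim}\ker(t-1)}$ for the Weil representation, which is explicit; and (b) the distribution $p_{s,C}(T)$ of a product of $s$ random transvections, i.e. the chance that such a product has a fixed space of given codimension $j$. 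For $s=1$ this is trivial ($j=1$ always, so $\EV_{K^r}[f_C] = |C|^{1/2} q^{-r}$). For $s=2$, the product of two random transvections has fixed space of codimension $0$, $1$, or $2$, and I would compute those probabilities; this is the "enumerative result about the product of transvections" alluded to before the statement. Since $C = C^{-1}$ for transvections in $\GLnq$, the function $f_C$ is real-valued, so Chebyshev applies directly.

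The key computations run as follows. First, $|C| \sim q^{2n-2}$ up to a bounded factor (the number of transvections in $\GLnq$), and $q^{-r} = q^{-(n-c)}$, so $\EV_{K^r}[f_C] = |C|^{1/2} q^{-(n-c)} \asymp q^{n-1} q^{-(n-c)} = q^{c-1}$, which is large when $c$ is large. Second, from Proposition \ref{combine} with $s=2$,
\[
\EV_{K^r}[(f_C)^2] = |C| \sum_{j=0}^{2} p_{2,C}(\text{codim }j)\, q^{-jr},
\]
and the dominant term comes from $j=0$ (products landing in the identity-fixed-space stratum, i.e. with trivial kernel, contributing $q^{0}=1$); one checks $p_{2,C}(\text{codim }0) = O(q^{-2})$ since forcing the product of two transvections to be invertible-with-no-fixed-vector is a codimension-$2$ condition, while $|C| \asymp q^{2n-2}$, giving an $O(q^{2n-4})$ contribution, and the $j=1,2$ terms are lower order in the relevant range. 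Hence $\Var_{K^r}(f_C) = \EV_{K^r}[(f_C)^2] - (\EV_{K^r}[f_C])^2$ is of order at most a constant times $(\EV_{K^r}[f_C])^2 / q^{\text{const}}$ — the point being that the variance is small relative to the square of the mean. Then by Chebyshev, under $K^r$ the function $f_C$ is, with probability $1 - \epsilon$, within a constant factor of its mean $\asymp q^{c-1}$, so $K^r$ is supported (up to $\epsilon$) on the set $A = \{\rho : |f_C(\rho)| \geq \delta q^{c-1}\}$. On the other hand, under Plancherel measure $\pi$ one has $\EV_\pi[(f_C)^2] = \sum_\rho \frac{d_\rho^2}{|G|}\frac{|C|\chi^\rho(C)^2}{d_\rho^2} = \frac{|C|}{|G|}\sum_\rho \chi^\rho(C)\overline{\chi^\rho(C)} = \frac{|C|}{|G|}\cdot\frac{|G|}{|C|} = 1$ by column orthogonality, so Chebyshev (or just Markov) gives $\pi(A) \leq 1/(\delta q^{c-1})^2$, which is $\leq \epsilon$ once $c$ is large enough. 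Combining, $||K^r_{\triv} - \pi||_{\mathsf{TV}} \geq K^r(A) - \pi(A) \geq (1-\epsilon) - \epsilon$, and adjusting constants gives the claim.

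The main obstacle I expect is the clean determination of $p_{2,C}(T)$ — i.e. computing exactly (or sharply enough) the probability that a product of two uniformly random transvections in $\GLnq$ has fixed space of each possible codimension, and then verifying that the resulting second moment bound yields $\Var_{K^r}(f_C) = o\big((\EV_{K^r}[f_C])^2\big)$ uniformly in $n$ (with the deficiency controlled by a power of $q$ depending on $c$). The subtlety is that one needs the estimate to hold for all large $n$ with the error governed only by $c$ and $q$, not $n$; this forces careful bookkeeping of the sizes $|C| \asymp q^{2n-2}$ against the character ratios $q^{-jr}$, especially checking that the $j=1$ and $j=2$ strata in the $s=2$ walk do not spoil the bound. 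Everything else — the moment formula, column orthogonality for the $\pi$-side, and Chebyshev — is routine given Proposition \ref{combine}.
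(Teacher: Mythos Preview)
Your approach is exactly the one the paper takes: use Proposition~\ref{combine} with $s=1,2$ and the transvection class $C$, compute the first two moments of $f_C$ under $K^r$ via the distribution of the fixed-space codimension of a product of two random transvections (this is Theorem~\ref{gl-transv} in the paper), use column orthogonality to get $\EV_\pi[f_C^2]=1$, and apply Chebyshev on both sides.

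However, your description of the $j=0$ term is garbled, and the numbers you wrote would break the argument if taken literally. Codimension $0$ of the fixed space means the fixed space is all of $V$, i.e.\ the product of the two transvections is the \emph{identity}; this is not ``invertible with no fixed vector'' (that would be codimension $n$). The correct probability is $p_{2,C}(\text{codim }0) = (q-1)/\bigl((q^n-1)(q^{n-1}-1)\bigr) \sim q^{-(2n-2)}$, not $O(q^{-2})$, so that $|C|\cdot p_{2,C}(\text{codim }0)\cdot q^{0} = 1$ exactly, not $O(q^{2n-4})$ (which would diverge with $n$ and destroy the variance bound). With the right values from Theorem~\ref{gl-transv} one gets $\Var_{K^r}(f_C) \le 1 + 2q^{c-1}$; this is not $o\bigl((\EV_{K^r}[f_C])^2\bigr)$ for fixed $c$, but since $\EV_{K^r}[f_C] \ge q^{c-1}$, Chebyshev with threshold $\tfrac{1}{3}q^{c-1}$ still gives $K^r(A) \le 9(1+2q^{c-1})/q^{2c-2}$, which tends to $0$ as $c\to\infty$ and yields the claim.
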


\begin{proof} Let $C$ denote the conjugacy class of transvections in $\GLnq$, and define the function $f_C$ on $\Irr(\GLnq)$ by
\[ f_C(\rho) = \frac{|C|^{1/2} \chi^{\rho}(C)}{d_{\rho}}.\] Since $C=C^{-1}$, the function $f_C$ is
real-valued, so we can apply Chebyshev's inequality to study it.

Let $A$ be the event that $f_C \leq \alpha$, for $\alpha>0$ to be specified momentarily. The orthogonality
relations for irreducible characters imply that under the Plancherel measure $\pi$, the random variable $f_C$ has
mean 0 and variance 1. So by Chebyshev's inequality, $\pi(A) \geq 1 - \frac{1}{\alpha^2}$. Letting
$\alpha=\frac{2}{3} q^{c-1}$, it follows that
\begin{equation} \label{firstuse} \pi(A) \geq 1 - \frac{2.25}{q^{2c-2}}.
\end{equation}

Next we will lower bound the mean $\EV_{K^r}[f_C]$ and upper bound the variance $\Var_{K^r}(f_C)$. By Proposition
\ref{combine}, \[ \EV_{K^r}[f_C] = |C|^{1/2} \left( \frac{\chi^{\eta}(C)}{d_{\eta}} \right)^r = \sqrt{\frac{(q^n-1)(q^{n-1}-1)}{q-1}} q^{-r}
= \sqrt{\frac{(q^n-1)(q^{n-1}-1)}{q-1}} q^{c-n}.\] For $n \geq 3$, this is at least \[ q^{n-1} q^{c-n} = q^{c-1}.\] 

To study the  variance $\Var_{K^r}(f_C)$, we use Proposition \ref{combine} with $s=2$ and Theorem  \ref{gl-transv}.
We conclude that $\EV_{K^r}[(f_C)^2]$ is equal to $\frac{(q^n-1)(q^{n-1}-1)}{q-1}$ multiplied by
\begin{eqnarray*}
& & \frac{q-1}{(q^n-1)(q^{n-1}-1)} + \left( \frac{1}{q} \right)^r \frac{2q^n-2q^{n-1}-q^2-q+2}{(q^n-1)(q^{n-1}-1)} \\
& & + \left( \frac{1}{q^2} \right)^r \frac{q^{2n-1}-3q^n+q^{n-1}+q^2}{(q^n-1)(q^{n-1}-1)}.
\end{eqnarray*}

Since \[ (\EV_{K^r}[f_C])^2 = \frac{(q^n-1)(q^{n-1}-1)}{q-1} \left( \frac{1}{q^2} \right)^r, \] we conclude that
\[ \Var_{K^r}[f_C] = \EV_{K^r}(f_C^2) - (\EV_{K^r}(f_C))^2 = 1 + T_1 + T_2 - T_3 \] where
\[\begin{aligned} T_1 & = \frac{1}{q^{n-c}} \frac{2q^n-2q^{n-1}-q^2-q+2}{q-1},\\
  T_2 & =  \frac{1}{q^{2n-2c}} \frac{q^{2n-1}-3q^n+q^{n-1}+q^2}{q-1}, \\
 T_3 & = \frac{1}{q^{2n-2c}} \frac{(q^n-1)(q^{n-1}-1)}{q-1}.\end{aligned}\] One easily checks that $T_1 \leq 2q^{c-1}$ and that $T_2<T_3$
if $n \geq 2$. So for such $n$, 
$$ \Var_{K^r}[f_C] \leq 1 + 2 q^{c-1}.$$

Recalling that $\EV_{K^r}[f_C] \geq q^{c-1}$ for $n \geq 3$, we conclude that for such $n$, 
\begin{eqnarray*}
K^r(A)= K^r \left( f_C \leq \frac{2}{3} q^{c-1} \right) & = & K^r \left( f_C - \EV_{K^r}(f_C) \leq \frac{2}{3} q^{c-1} - \EV_{K^r}(f_C) \right) \\
& \leq & K^r \left( f_C - \EV_{K^r}(f_C) \leq \frac{2}{3} q^{c-1} - q^{c-1} \right) \\
& = & K^r \left( f_C - \EV_{K^r}(f_C) \leq - \frac{1}{3} q^{c-1} \right) \\
& \leq & K^r \left( |f_C-\EV_{K^r}(f_C)| \geq \frac{1}{3} q^{c-1} \right). \end{eqnarray*}
By Chebyshev's inequality, this is at most
\[ \frac{ 9 \Var_{K^r}(f_C)}{q^{2c-2}} \leq  \frac{ 9 (1+2q^{c-1})}{q^{2c-2}},\]
and thus
$$K^r(A) \leq  \frac{ 9 (1+2q^{c-1})}{q^{2c-2}}.$$
Since \[ ||K_{\triv}^r-\pi||_{\mathsf{TV}} \geq |K_{\triv}^r(A)-\pi(A)|, \]  
this inequality and \eqref{firstuse} imply the theorem.
\end{proof} 

\begin{remark}
{\em The proof of Theorem \ref{goalGL1} shows that its conclusion holds whenever $n \geq 3$ and 
$$\epsilon \geq 11.25q^{-2c+2}+18q^{-c+1},$$
making the dependence of $\epsilon$ on $c$ explicit. A similar remark applies to further lower bound results in the paper, 
Theorems \ref{goalGU}, \ref{lower1mod4}, \ref{lower3mod4}, and \ref{lower-Sp-even}. 
}
\end{remark}

\section{Weil representations of unitary groups}

Recall by Theorem \ref{charbound} that for the Markov chain on irreducible representations of a finite group $G$
given by tensoring with the representation $\eta$,

\[ 4 ||K^r_{\triv} - \pi ||_{\mathsf{TV}}^2 \leq \sum_{C \neq \{\Id\}} |C|
\left| \frac{\chi^{\eta}(C)}{d_{\eta}} \right|^{2r} .\]

According to \cite[formula (9)]{TZ}, if $\eta$ is the (reducible) Weil representation, then
for any element $g$ of the general unitary group $\GUnq = \mathrm{GU}(n,\F_{q^2})$, \[ \chi^{\eta}(g) = (-1)^n (-q)^{\dim(\ker(g-1))}.\]
In particular, $\chi^{\eta}(\Id)=q^n$. Thus from Theorem \ref{charbound},
\begin{eqnarray*}
4 ||K^r_{\triv} - \pi ||_{\mathsf{TV}}^2 & \leq & 
\sum_{k=0}^{n-1} \sum_{g \atop \dim(\ker(g-1))=k)} \frac{q^{2rk}}{q^{2nr}} \\
& = & \frac{|\GUnq|}{q^{2nr}} \sum_{k=0}^{n-1} q^{2rk} \PV_U(n,k),
\end{eqnarray*} where $\PV_U(n,k)$ is the probability that a random element of
$\GUnq$ has a $k$-dimensional fixed space.

The following theorem is due to Rudvalis and Shinoda \cite{RS}. Recall that
\[ |\GUnq| = q^{{n \choose 2}} \prod_{i=1}^n (q^i-(-1)^i).\] 

\begin{theorem} \label{RSunitary} The proportion $\PV_U(n,k)$ is equal to
\[ \frac{1}{|\GUkq|} \sum_{i=0}^{n-k} \frac{(-1)^i (-q)^{{i \choose 2}}}{(-q)^{ki} |\GUiq|}.\]
\end{theorem}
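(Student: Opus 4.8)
The plan is to compute $\PV_U(n,k)$ by a direct counting argument combined with inclusion-exclusion (equivalently, Möbius inversion over the lattice of subspaces), following the approach of Rudvalis and Shinoda. First I would set up the basic counting identity. An element $g \in \GUnq$ with $\dim \ker(g-1) = k$ fixes pointwise a nondegenerate $k$-dimensional subspace precisely when its fixed space is nondegenerate; but in general the fixed space $V^g := \ker(g-1)$ may be degenerate, so one must be careful. The cleanest route is to count, for each $j$, the number $N_j$ of elements $g$ together with a choice of $j$-dimensional subspace $W$ contained in $V^g$, and relate $\sum_g q^{j\cdot[\text{something}]}$ to sums over such pairs. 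Concretely, let $a_j$ be the number of $g \in \GUnq$ with $\dim V^g \geq j$ weighted appropriately; the standard trick is to observe that $\sum_{W : \dim W = j} \#\{g : W \subseteq V^g\}$ can be evaluated two ways.

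The key steps, in order: (1) Count the number of $g \in \GUnq$ that fix a \emph{fixed} $j$-dimensional subspace $W$ pointwise. If $W$ is nondegenerate this is $|\GU_{n-j}(q)|$ acting on $W^\perp$; if $W$ is totally isotropic the stabilizer is larger (a parabolic-type subgroup). This dependence on the isometry type of $W$ is exactly why the formula involves the twisted quantities $(-q)^{\binom{i}{2}}$ and $(-q)^{ki}$ rather than plain powers of $q$. (2) Count the number of $j$-dimensional subspaces of each isometry type in the Hermitian space $\F_{q^2}^n$; these are given by Gaussian-binomial-type formulas in $-q$, which is the source of the $(-q)$-analogues. (3) Assemble $\sum_j (\text{pairs with } \dim W = j, W \subseteq V^g)$ and perform inclusion-exclusion / Möbius inversion on the subspace lattice to extract $\PV_U(n,k)$ from the "at least" counts. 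The alternating sign $(-1)^i$ and the factor $1/|\GUiq|$ in the statement come out of this inversion step. Dividing by $|\GUnq|$ at the end converts counts to proportions, and the $1/|\GUkq|$ prefactor reflects that once the fixed space is pinned down to be a specific nondegenerate $k$-space, $g$ acts on the complementary $(n-k)$-space as an element with \emph{no} fixed vectors, and the count of such fixed-point-free elements is what the inner sum $\sum_{i=0}^{n-k}$ computes (it is essentially a $\GU$-analogue of a derangement count).

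The main obstacle I expect is step (1)–(2): correctly tracking how the pointwise stabilizer of a $j$-subspace $W$ and the number of such subspaces depend on the \emph{radical} of $W$ (i.e. on $\dim(W \cap W^\perp)$), and checking that when these two quantities are multiplied and summed over all isometry types of $W$, the degenerate contributions collapse into the clean single sum over $i$ with weights $(-q)^{\binom{i}{2}}/((-q)^{ki}|\GUiq|)$. This bookkeeping is where the characteristic signs $(-1)^i$ genuinely arise, and it is easy to be off by a power of $q$ or a sign. An alternative, perhaps cleaner, organization is to avoid degenerate subspaces entirely: decompose $V = V^g \oplus U$ with $U$ a $g$-invariant complement on which $g-1$ is invertible, count pairs $(g, \text{nondegenerate } k\text{-space equal to } V^g)$, and then the only enumerative input needed is (a) the number of nondegenerate $k$-subspaces of $\F_{q^2}^n$ and (b) the number of elements of $\GU_{n-k}(q)$ with no nonzero fixed vector; the latter is computed by a short inclusion-exclusion over subspaces of $\F_{q^2}^{n-k}$, which directly yields the inner alternating sum. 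I would present the argument this second way, verify the two enumerative ingredients against small cases ($k = n$, $n = 1$), and then confirm the final expression matches the claimed formula by cross-checking $\sum_{k=0}^{n} \PV_U(n,k) = 1$.
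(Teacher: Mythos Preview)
The paper does not prove Theorem~\ref{RSunitary}; it simply quotes the result from Rudvalis and Shinoda \cite{RS}. So there is no proof in the paper to compare your proposal against.

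That said, your ``cleaner'' second organization has a genuine gap. You propose to count pairs consisting of a nondegenerate $k$-subspace $W$ together with an element of $\GU(W^\perp)$ having no nonzero fixed vector, and to identify this with the count of $g$ whose fixed space has dimension exactly $k$. But the fixed space $V^g=\ker(g-1)$ of a unitary element need not be nondegenerate: for a unitary transvection $g=I+vw^*$ (with $v$ isotropic and $w\in\F_{q^2}^\times v$), the fixed space is the hyperplane $v^\perp$, whose radical is $\langle v\rangle\neq 0$. Concretely, for $n=2$ and $k=1$ the ``nondegenerate line times derangement in $\GU_1(q)$'' count gives $q^2(q-1)$ elements, whereas the correct count is $(q-1)(q^2+q+1)$; the discrepancy is exactly the $q^2-1$ transvections, all of which have degenerate fixed space. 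So this route, as stated, undercounts.

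Your first organization (Möbius inversion on the subspace lattice, keeping track of the radical of $W$) is the one that can be made to work, and is in the spirit of the Rudvalis--Shinoda argument. The bookkeeping you flag as the ``main obstacle'' is indeed where the proof lives: one must show that, after summing over isometry types, the pointwise-fixer counts collapse to the clean $(-q)$-analogue in the statement. If you pursue this, a useful intermediate target is the identity $\sum_{g\in\GUnq} t^{\dim V^g}=\prod_{i=1}^{n}(t+q^i-(-1)^i-1)$ (or an equivalent generating-function form), from which the stated alternating sum drops out by extracting coefficients.
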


Corollary \ref{countfixer} gives an upper bound on the quantity in Theorem \ref{RSunitary}.

\begin{cor} \label{countfixer} The number of elements of $\GUnq$ with a k-dimensional fixed space is at most
\[ \left( 1+\frac{1}{q^{k+1}} \right) q^{n^2-k^2}.\]
\end{cor}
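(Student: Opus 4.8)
The plan is to start from the exact formula in Theorem \ref{RSunitary} and simply bound the alternating sum by its dominant term plus an error. Write
\[ |\GUnq| \cdot \PV_U(n,k) = \frac{|\GUnq|}{|\GUkq|} \sum_{i=0}^{n-k} \frac{(-1)^i (-q)^{\binom{i}{2}}}{(-q)^{ki}|\GUiq|}. \]
The leading term is $i=0$, which contributes $|\GUnq|/|\GUkq|$. So the first step is to estimate $|\GUnq|/|\GUkq|$: using the product formula $|\GUnq| = q^{\binom{n}{2}}\prod_{i=1}^n(q^i-(-1)^i)$, one gets $|\GUnq|/|\GUkq| = q^{\binom{n}{2}-\binom{k}{2}}\prod_{i=k+1}^n(q^i-(-1)^i)$, and since $\binom{n}{2}-\binom{k}{2} + (k+1) + \cdots + n = \binom{n}{2}-\binom{k}{2} + \binom{n+1}{2} - \binom{k+1}{2}$; a short index computation shows this exponent equals $n^2 - k^2$ when one takes the crude upper bound $q^i - (-1)^i \le q^i$ for each factor (valid since $(-1)^i \ge -1 \le 0$ is not quite right — rather $q^i - (-1)^i \le q^i + 1 \le q^i \cdot \tfrac{q+1}{q} $, so some care is needed). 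I would instead bound each factor $q^i-(-1)^i \le q^i(1+q^{-i})$ and absorb the resulting $\prod(1+q^{-i})$ into the $1+q^{-(k+1)}$ slack, or, more cleanly, note $q^i - (-1)^i \le q^i$ whenever $i$ is odd and $q^i - (-1)^i = q^i - 1 < q^i$ whenever $i$ is even, so in fact \emph{every} factor satisfies $q^i - (-1)^i \le q^i$ except... no: when $i$ is odd, $q^i-(-1)^i = q^i+1 > q^i$. So the honest statement is $\prod_{i=k+1}^n (q^i-(-1)^i) \le q^{(k+1)+\cdots+n}\prod_{i\ge k+1}(1+q^{-i})$, and $\prod_{i\ge 1}(1+q^{-i})$ is a bounded constant that I will need to control against the $(1+q^{-(k+1)})$ in the claimed bound — this is the first place to be careful.

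Second, I would bound the tail $\sum_{i=1}^{n-k}$ of the alternating sum. Each term has absolute value $\dfrac{(-q)^{\binom{i}{2}}}{q^{ki}|\GUiq|}$, and since $|\GUiq| \ge q^{i^2 - O(1)}$ (more precisely $|\GUiq| = q^{\binom{i}{2}}\prod_{j=1}^i(q^j-(-1)^j) \ge q^{\binom{i}{2}} \cdot \tfrac12 q^{1+2+\cdots+i}$ roughly), the $i$-th term is at most something like $q^{\binom{i}{2}}/(q^{ki}\cdot q^{i(i+1)/2}) = q^{-ki}$ up to constants, so the tail is dominated by the $i=1$ term of size $\approx q^{-k}$, giving a geometric series $\sum_{i\ge 1} q^{-ki} \le \tfrac{q^{-k}}{1-q^{-k}} \le 2q^{-k}$. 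Multiplying back by $|\GUnq|/|\GUkq| \le (1+q^{-(k+1)})q^{n^2-k^2}$ (once the constant issue above is resolved), the tail contributes at most a constant times $q^{n^2-k^2-k}$, which should be absorbable into the $q^{-(k+1)}$-type slack already present.

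The cleanest organization: establish the two-sided estimate $\sum_{i=0}^{n-k}\dfrac{(-1)^i(-q)^{\binom{i}{2}}}{(-q)^{ki}|\GUiq|} \le \dfrac{1}{|\GUkq|}\left(1 + \dfrac{c}{q^{k+1}}\right)$ by isolating $i=0$ and geometric-series bounding the rest — wait, the $1/|\GUkq|$ is already pulled out, so really the claim reduces to $\sum_{i=0}^{n-k}\dfrac{(-1)^i(-q)^{\binom{i}{2}}}{(-q)^{ki}|\GUiq|} \le 1 + q^{-(k+1)}$ together with $|\GUnq|/|\GUkq| \le q^{n^2-k^2}$. Hmm, but neither of those is separately true on the nose; it is the product that works. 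So I would: (1) write the count as $\dfrac{|\GUnq|}{|\GUkq|}\sum_i(\cdots)$; (2) prove $\dfrac{|\GUnq|}{|\GUkq|} = q^{n^2-k^2}\prod_{i=k+1}^n\dfrac{q^i-(-1)^i}{q^i}$ and note $\prod_{i=k+1}^n\dfrac{q^i-(-1)^i}{q^i} \le \prod_{i\ge k+1}(1+q^{-i}) \le 1 + \dfrac{2}{q^{k+1}}$ for $q\ge 2$ (a standard $\prod(1+x_i)\le e^{\sum x_i}\le 1+2\sum x_i$ estimate when $\sum x_i$ is small); (3) prove $\left|\sum_{i=1}^{n-k}(\cdots)\right| \le \dfrac{1}{|\GUkq|}\cdot(\text{small})$ and $\sum_{i=0}^{n-k}(\cdots) \le \dfrac{1}{|\GUkq|}(1+\text{small})$; (4) multiply and collect all the small terms. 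The main obstacle is bookkeeping the several $O(q^{-k})$ and $O(q^{-(k+1)})$ error terms so that their sum is genuinely $\le q^{-(k+1)}$ times the main term for all $q\ge 2$ and all $k\ge 0$ — this requires being a little generous in the geometric-series steps but choosing which estimate to sharpen where, and checking the edge case $k=0$ separately if needed. No deep idea is involved; it is an exercise in careful elementary estimation of the Rudvalis--Shinoda formula.
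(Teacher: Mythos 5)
Your starting point is the same as the paper's (the Rudvalis--Shinoda formula), but the key estimation step is where the proposal goes wrong, and the gap is genuine rather than a bookkeeping nuisance.

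The crux is that your proposed bounds replace the signed factors by absolute values and then try to control the accumulated slack, but the claimed estimates are actually false for small $q$ and $k$. You propose $\prod_{i\ge k+1}\bigl(1+q^{-i}\bigr)\le 1+2q^{-(k+1)}$; already for $q=2$, $k=0$ this reads $\prod_{i\ge 1}(1+2^{-i})\le 2$, but the left side is $\approx 2.384$, and for $q=2$, $k=1$ it reads $\prod_{i\ge 2}(1+2^{-i})\le 1.5$, but the left side is $\approx 1.59$. The bound in the corollary is sharp (equality at $n=k+1$ for $k$ even), so there is no room to be ``a little generous''; any argument that discards the sign information and works with $\prod(1+q^{-i})$ cannot recover the stated constant. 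Worse, the two pieces don't independently admit the bounds you need: for $k$ odd the $i=1$ term of the sum $\sum_i \frac{(-1)^i(-q)^{\binom{i}{2}}}{(-q)^{ki}|\GUiq|}$ is $+\frac{1}{q^k(q+1)}>0$, so the sum exceeds $1$; if you had already spent a factor $1+2q^{-(k+1)}$ on $|\GUnq|/|\GUkq|$, the product would overshoot.

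The missing idea is to exploit the alternation of signs in both factors simultaneously, splitting on the parity of $k$. Write $|\GUnq|/|\GUkq| = q^{n^2-k^2}\prod_{i=k+1}^n\bigl(1-(-1/q)^i\bigr)$. When $k$ is odd, the first factor in the product (at $i=k+1$, even) is $1-q^{-(k+1)}<1$, and grouping subsequent factors in alternating pairs shows the whole product is $\le 1$; meanwhile the alternating series for the sum has its first correction term (at $i=1$) positive but its magnitude decreasing, so the sum is at most its first two terms, namely $1+\frac{1}{q^k(q+1)}\le 1+q^{-(k+1)}$. When $k$ is even it is the reverse: the first factor $1+q^{-(k+1)}$ appears in the product (and pairing with the next, smaller, factor shows the product is $\le 1+q^{-(k+1)}$), while the sum's $i=1$ term is negative and the decreasing alternating series gives sum $\le 1$. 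In each case one factor is $\le 1$ and the other is $\le 1+q^{-(k+1)}$, and multiplying gives exactly the claimed bound. This parity-complementarity between the two factors is the heart of the proof and is not recoverable from absolute-value estimates.
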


\begin{proof} By Theorem \ref{RSunitary} we need to upper bound
\[ \frac{|\GUnq|}{|\GUkq|} \sum_{i=0}^{n-k} \frac{(-1)^i (-q)^{{i \choose 2}}}{(-q)^{ki} |\GUiq|}.\]

First consider the term $\frac{|\GUnq|}{|\GUkq|}$. Using the formula for the size of the unitary groups,
and simplifying, this term is exactly equal to \[ q^{n^2-k^2} \prod_{i=k+1}^n \left( 1 - \left( \frac{-1}{q} \right)^i \right).\]
This is at most $\left( 1+\frac{1}{q^{k+1}} \right) q^{n^2-k^2}$ if $k$ is even and at most $q^{n^2-k^2}$
if $k$ is odd.

Next consider the term
\[ \sum_{i=0}^{n-k} \frac{(-1)^i (-q)^{{i \choose 2}}}{(-q)^{ki} |\GUiq|}.\] Using the formula for the
size of the unitary groups, this is equal to \begin{equation} \label{formU}
\sum_{i=0}^{n-k} \frac{(-1)^i (-q)^{{i \choose 2}}}{(-q)^{ki} q^{{i \choose 2}} \prod_{j=1}^i (q^j-(-1)^j)}.
\end{equation} The first case is that $k$ is even. Then the ith term is positive if $i$ is equal to 0 or 3 mod 4,
and negative if $i$ is equal to 1 or 2 mod 4. Since the terms in the summand are decreasing in magnitude,
it follows that \eqref{formU} is at most 1 if $k$ is even. The second case is that $k$ is odd. Then the ith
term is positive if $i$ is equal to 0 or 1 mod 4, and is negative if $i$ is equal to 2 or 3 mod 4. Since the
terms in the summand are decreasing in magnitude, it follows that for $k$ odd, \eqref{formU} is at most
the sum of its first two terms, which is \[ 1 + \frac{1}{q^k(q+1)} \leq 1 + \frac{1}{q^{k+1}}.\] 

Putting the pieces together, it follow that for all $k$,
\[  \frac{|\GUnq|}{|\GUkq|} \sum_{i=0}^{n-k} \frac{(-1)^i (-q)^{{i \choose 2}}}{(-q)^{ki} |\GUiq|}
\leq \left( 1+\frac{1}{q^{k+1}} \right) q^{n^2-k^2}.\]
\end{proof}

The above ingredients lead to the following theorem.

\begin{theorem} \label{unitbound} Let $K$ be the Markov chain on irreducible representations of $\GUnq$
given by tensoring with the (reducible) Weil representation. Then for $r=n+c$ with $c>0$,
\[ ||K_{\triv}^r - \pi||_{\mathsf{TV}} \leq .7 q^{-c}.\] 
\end{theorem}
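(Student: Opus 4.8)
The plan is to combine Theorem~\ref{charbound} with the explicit character formula for the unitary Weil representation and the counting bound of Corollary~\ref{countfixer}. By Theorem~\ref{charbound} and the formula $\chi^{\eta}(g) = (-1)^n(-q)^{\dim(\ker(g-1))}$, together with $d_{\eta} = q^n$, we have
\[ 4\|K_{\triv}^r - \pi\|_{\mathsf{TV}}^2 \leq \frac{1}{q^{2nr}} \sum_{k=0}^{n-1} q^{2rk} N_U(n,k), \]
where $N_U(n,k)$ is the number of elements of $\GUnq$ with a $k$-dimensional fixed space (the identity is excluded since $k$ ranges only up to $n-1$). By Corollary~\ref{countfixer}, $N_U(n,k) \leq (1 + q^{-k-1}) q^{n^2 - k^2} \leq 2 q^{n^2-k^2}$, so the right-hand side is bounded by
\[ \frac{2}{q^{2nr}} \sum_{k=0}^{n-1} q^{2rk + n^2 - k^2} = 2 q^{n^2 - 2nr} \sum_{k=0}^{n-1} q^{2rk - k^2}. \]

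Next I would analyze the exponent $2rk - k^2 = k(2r - k)$ as a function of $k \in \{0, 1, \dots, n-1\}$. Since $r = n+c \geq n > n-1 \geq k$, the parabola $k \mapsto k(2r-k)$ is increasing on this range, so its maximum is at $k = n-1$, giving exponent $(n-1)(2r - n + 1) = (n-1)(n + 2c + 1)$. Substituting $r = n+c$ into $n^2 - 2nr = n^2 - 2n(n+c) = -n^2 - 2nc$, the dominant term of the bound is
\[ 2 q^{-n^2 - 2nc + (n-1)(n+2c+1)} = 2 q^{-n^2 - 2nc + n^2 + 2nc + n - n - 2c - 1} = 2 q^{-2c-1}. \]
So the $k=n-1$ term alone contributes $2q^{-2c-1}$ to $4\|\cdot\|_{\mathsf{TV}}^2$; I then need to show the remaining terms ($k \leq n-2$) only add a geometrically small correction. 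The ratio of consecutive terms $q^{k(2r-k)}/q^{(k+1)(2r-k-1)} = q^{-(2r - 2k - 1)} = q^{-(2(n-k)+2c-1)}$, which for $k \leq n-1$ is at most $q^{-(2c+1)} \leq q^{-3}$ when $c \geq 1$; summing the geometric tail multiplies the leading term by a factor close to $1$ (bounded by $1/(1 - q^{-3}) \leq 8/7$ for $q \geq 2$).

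Putting this together gives $4\|K_{\triv}^r - \pi\|_{\mathsf{TV}}^2 \leq 2 \cdot \frac{8}{7} q^{-2c-1} = \frac{16}{7} q^{-2c-1}$, hence $\|K_{\triv}^r - \pi\|_{\mathsf{TV}} \leq \frac{1}{2}\sqrt{\frac{16}{7} q^{-2c-1}} = \frac{2}{\sqrt{7}} q^{-c} q^{-1/2} \leq \frac{2}{\sqrt{14}} q^{-c} < 0.535\, q^{-c}$, which is comfortably better than the claimed $0.7\,q^{-c}$; a more careful accounting of the constants (using the sharper $N_U(n,k) \le (1+q^{-k-1})q^{n^2-k^2}$ rather than the crude factor $2$, and tracking the geometric sum with the actual ratio $q^{-(2c+1)}$ rather than the worst case) will only improve this. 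I expect the main obstacle to be purely bookkeeping: carefully bounding the geometric series of the $q^{2rk-k^2}$ terms and shepherding the numerical constant down to $0.7$ (or below) uniformly in $q \geq 2$, $c \geq 1$, and $n$ large, rather than any conceptual difficulty — all the structural input (the character formula, Theorem~\ref{charbound}, Corollary~\ref{countfixer}) is already in hand.
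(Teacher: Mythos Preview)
Your proposal is correct and follows essentially the same approach as the paper: apply Theorem~\ref{charbound}, insert the bound from Corollary~\ref{countfixer}, and control the resulting sum by a geometric series. The paper substitutes $k\mapsto n-k$ at the outset (which turns the exponent into $-k^2-2kc$ and makes the dominant $k=1$ term immediate) and uses the slightly sharper factor $1+1/q$ instead of your crude factor $2$, but these are cosmetic differences; your bookkeeping already yields a constant below $0.7$. One small point worth making explicit: since $r$ and $n$ are integers, $c=r-n$ is a positive integer, so your assumption $c\geq 1$ (needed for the $q^{-3}$ ratio bound) is automatic.
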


\begin{proof} From Theorem \ref{charbound} we know that
\begin{eqnarray*}
4  ||K_{\triv}^r - \pi||_{\mathsf{TV}}^2 & \leq & \sum_{k=0}^{n-1} \PV_U(n,k) |\GUnq| \left( \frac{1}{q^{n-k}} \right)^{2r} \\
& = & \sum_{k=1}^n \PV_U(n,n-k) |\GUnq| \left( \frac{1}{q^k} \right)^{2r}. \end{eqnarray*}

By Corollary \ref{countfixer}, this is at most \[
\sum_{k=1}^n \left( 1+\frac{1}{q^{n-k+1}} \right) q^{n^2-(n-k)^2} \left( \frac{1}{q^k} \right)^{2r} 
\leq \left( 1+\frac{1}{q} \right) \sum_{k=1}^n  q^{n^2-(n-k)^2} \left( \frac{1}{q^k} \right)^{2r}.
\] Since $r=n+c$, this is at most
\[ (1+1/q) \sum_{k=1}^n \frac{1}{q^{k^2+2kc}} \leq  (1+1/q)  \frac{1}{q^{2c}} \sum_{k=1}^n 1/q^{k^2}
\leq \frac{(1+1/q)}{q(1-1/q)} \frac{1}{q^{2c}} \leq 1.5 q^{-2c}.\] 

Hence \[ 4 ||K_{\triv}^r - \pi||_{\mathsf{TV}}^2 \leq  1.5 q^{-2c},\] so the theorem follows by taking square
roots. \end{proof}

Next we lower bound the convergence rate of the tensor product Markov chain. As in the case of $\mathrm{GL}$, we
require an enumerative result about transvections (see Theorem \ref{glutransv}).

\begin{theorem} \label{goalGU} Let $K_{\triv}^r$ be the distribution on $\Irr(\GUnq)$ given by r steps of tensoring with the
Weil representation started at the trivial representation. Then given $\epsilon>0$, there exists $c>0$ (depending only on $\epsilon$) such that
\[ ||K_{\triv}^r - \pi||_{\mathsf{TV}} \geq 1 - \epsilon \] for $r=n-c$ and sufficiently large $n$. 
\end{theorem}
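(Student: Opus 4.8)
The plan is to mirror the proof of Theorem \ref{goalGL1} essentially verbatim, with $\GLnq$ replaced by $\GUnq$, the Weil character $q^{\dim\ker(g-1)}$ replaced by $\chi^\eta(g)=(-1)^n(-q)^{\dim\ker(g-1)}$ (so $d_\eta=q^n$ and $|\chi^\eta(C)/d_\eta| = q^{k-n}$ on the class where $\dim\ker(g-1)=k$), and the $\GL$ transvection count (Theorem \ref{gl-transv}) replaced by the unitary transvection count (Theorem \ref{glutransv}). First I would let $C$ be the conjugacy class of transvections in $\GUnq$; since a transvection is conjugate to its inverse, $C=C^{-1}$, so $f_C(\rho)=|C|^{1/2}\chi^\rho(C)/d_\rho$ is real-valued and Chebyshev's inequality applies. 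Under Plancherel measure $\pi$, the orthogonality relations give $\EV_\pi[f_C]=0$ and $\Var_\pi[f_C]=1$, so with $A$ the event $\{f_C\le\alpha\}$ and $\alpha = \tfrac23 q^{c-1}$ (or some comparable multiple of $q^{c-1}$ to be pinned down once the transvection count is in hand), Chebyshev gives $\pi(A)\ge 1-\tfrac94 q^{-2c+2}$, exactly as in \eqref{firstuse}.

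Next I would estimate the mean and variance of $f_C$ under $K^r$ using Proposition \ref{combine}. For the mean, $\EV_{K^r}[f_C]=|C|^{1/2}(\chi^\eta(C)/d_\eta)^r$; here $\chi^\eta$ on a transvection has $\dim\ker(g-1)=n-1$, so $|\chi^\eta(C)/d_\eta|=q^{-1}$ and $\EV_{K^r}[f_C]=\pm|C|^{1/2}q^{-r}$. Since $|C|$ for unitary transvections is of order $q^{2n-1}$ (it equals $(q^n-(-1)^n)(q^{n-1}-(-1)^{n-1})/(q+1)$ up to the exact constant, analogous to the $\GL$ formula $(q^n-1)(q^{n-1}-1)/(q-1)$), we get $|\EV_{K^r}[f_C]|\ge q^{n-1}\cdot q^{-r}=q^{c-1}$ for $n$ large, and I must also check the sign: with $r=n-c$, $(\chi^\eta(C)/d_\eta)^r$ carries a sign, but since we only use $|\EV_{K^r}[f_C]|$ in the Chebyshev bound $K^r(|f_C-\EV_{K^r}f_C|\ge\tfrac13 q^{c-1})$ this is harmless — one works with $A$ or its reflection $\{f_C\ge-\alpha\}$ according to the sign, and $\pi$ is symmetric so the bound on $\pi$ of the relevant half-event is unchanged. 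For the variance, I would apply Proposition \ref{combine} with $s=2$, expand $\EV_{K^r}[(f_C)^2]=|C|\sum_T p_{2,C}(T)(\chi^\eta(T)/d_\eta)^{2r}$ using Theorem \ref{glutransv} to identify the classes $T$ arising from a product of two transvections together with the probabilities $p_{2,C}(T)$, then subtract $(\EV_{K^r}[f_C])^2=|C|q^{-2r}$. The dominant term is the contribution from $T=\{\Id\}$, which gives $|C|\cdot p_{2,C}(\Id)\cdot 1 = 1$ (since $p_{2,C}(\Id)=1/|C|$); the remaining terms are each $O(q^{c-1})$ or smaller because the nontrivial classes $T$ in a product of two transvections have $\dim\ker\ge n-2$, contributing factors $(q^{-1})^{2r}$ or $(q^{-2})^{2r}$ times class sizes that are polynomially bounded, so $\Var_{K^r}[f_C]\le 1 + O(q^{c-1})$, paralleling the bound $1+2q^{c-1}$ in Theorem \ref{goalGL1}.

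Finally I would combine: $K^r(A)=K^r(f_C\le\tfrac23 q^{c-1})\le K^r(|f_C-\EV_{K^r}f_C|\ge\tfrac13 q^{c-1})\le 9\Var_{K^r}[f_C]/q^{2c-2}=O(q^{-c+1})$ by Chebyshev, so $K^r(A)$ is small while $\pi(A)$ is close to $1$; hence $\|K^r_{\triv}-\pi\|_{\mathsf{TV}}\ge|\pi(A)-K^r(A)|\ge 1-O(q^{-c+1})$, which is $\ge 1-\epsilon$ once $c$ is chosen large enough depending only on $\epsilon$, and $n$ large. The main obstacle is bookkeeping in the variance step: I need the explicit enumeration of the product of two random transvections in $\GUnq$ from Theorem \ref{glutransv} — which conjugacy classes $T$ occur, their sizes, the fixed-space dimensions $\dim\ker(T-1)$ feeding into $\chi^\eta(T)$, and the probabilities $p_{2,C}(T)$ — and then verify that every non-identity contribution is genuinely lower order in $q$ uniformly in $n$. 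The unitary case has the extra nuisance of the $(-1)$ and $(-q)$ signs in $\chi^\eta$ and in the class-size formulas, but since the convergence bound only involves $|\chi^\eta(T)/d_\eta|^{2r}=q^{2r(k-n)}$ these signs drop out of every estimate and affect only the (harmless) sign of $\EV_{K^r}[f_C]$.
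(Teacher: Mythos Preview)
Your approach is the paper's, but there is one concrete error that would derail the variance step as written. In Proposition~\ref{combine} the exponent on $\chi^\eta(T)/d_\eta$ is $r$, not $2r$: for every $s$ one has $\EV_{K^r}[(f_C)^s]=|C|^{s/2}\sum_T p_{s,C}(T)\bigl(\chi^\eta(T)/d_\eta\bigr)^r$. With the correct exponent, two of your claims fail. First, the codimension-$2$ contribution to the second moment is of order $|C|\cdot q^{-2r}\approx q^{2c-2}$ (since the codimension-$2$ probability from Theorem~\ref{glutransv} is $\approx 1$), which is \emph{not} $O(q^{c-1})$; it is the same size as $(\EV_{K^r}[f_C])^2=|C|q^{-2r}$, and the variance bound works only because of the exact inequality $T_2<T_3$ between these two terms, not because either is individually small. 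Second, the signs in $\chi^\eta$ do \emph{not} drop out of the second moment: with exponent $r$ the codimension-$1$ term carries $(-1/q)^r$, so the paper splits into $r$ even (where $\Var_{K^r}(f_C)\le 1+T_1\le 1+q^{c-1}$) and $r$ odd (where $\Var_{K^r}(f_C)\le 1-T_1+T_2-T_3\le 1$, paired with your reflected event $\{f_C\ge -\alpha\}$).

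Once the exponent is corrected and Theorem~\ref{glutransv} is plugged in, everything goes through as you outline; the paper uses $\alpha=\tfrac14 q^{c-1}$ rather than $\tfrac23 q^{c-1}$ because the unitary mean bound is only $|\EV_{K^r}[f_C]|\ge\tfrac12 q^{c-1}$ (a factor $\sqrt{(q-1)/(q+1)}$ worse than in $\GL$), but this is cosmetic.
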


\begin{proof} Let $C$ denote the conjugacy class of transvections in $\GUnq$ and define the function $f_C$ on $\Irr(\GUnq)$ by
\[ f_C(\rho) = \frac{|C|^{1/2} \chi^{\rho}(C)}{d_{\rho}}.\] Since $C=C^{-1}$, the function $f_C$ is real-valued and we can
apply Chebyshev's inequality to it.

Let $A$ be the event that $f_C \leq \frac{1}{4} q^{c-1}$. By Chebyshev's inequality, it follows that
\[ \pi(A) \geq 1 - \frac{16}{q^{2c-2}}.\]

Suppose first that $r$ is even. By Proposition \ref{combine},
\begin{eqnarray*}
\EV_{K^r}(f_C) & = & |C|^{1/2} \left( \frac{\chi^{\eta}(C)}{d_{\eta}} \right)^r \\
& = & \sqrt{\frac{(q^n-(-1)^n)(q^{n-1}-(-1)^{n-1})}{q+1}} q^{c-n} \\
& \geq & \sqrt{\frac{q-1}{q+1}} \sqrt{\frac{(q^n-1)(q^{n-1}-1)}{q-1}} q^{c-n}.\end{eqnarray*} For $n \geq 3$ this is at least
\[ \sqrt{\frac{1}{3}} q^{n-1} q^{c-n} \geq .5 q^{c-1}.\]
 
To study the variance $\Var_{K^r}(f_C)$, we use Proposition \ref{combine} with $s=2$ and Theorem \ref{glutransv}. We conclude
that $\EV_{K^r}[f_C^2]$ is equal to $\frac{(q^n-(-1)^n)(q^{n-1}-(-1)^{n-1})}{q+1}$ multiplied by
\begin{eqnarray*}
& & \frac{q+1}{(q^n-(-1)^n)(q^{n-1}-(-1)^{n-1})} + \left( \frac{1}{q} \right)^r \frac{q^2-q-2}{(q^n-(-1)^n)(q^{n-1}-(-1)^{n-1})} \\
& & + \left( \frac{1}{q} \right)^{2r} \frac{q^{2n-1} - (-1)^{n-1}q^n - (-1)^n q^{n-1} -q^2}{(q^n-(-1)^n)(q^{n-1}-(-1)^{n-1})}.
\end{eqnarray*} Since
\[ (\EV_{K^r}(f_C))^2 = \frac{(q^n-(-1)^n)(q^{n-1}-(-1)^{n-1})}{q+1} \left( \frac{1}{q^2} \right)^r,\] we conclude that
\[ \Var_{K_r}[f_C] = 1 + T_1 + T_2 - T_3,\] where
\[\begin{aligned} T_1 & = \frac{1}{q^{n-c}} \frac{q^2-q-2}{q+1}, \\
T_2 & = \frac{1}{q^{2n-2c}} \frac{q^{2n-1} - (-1)^{n-1}q^n - (-1)^nq^{n-1}-q^2}{q+1}, \\
T_3 & = \frac{1}{q^{2n-2c}} \frac{(q^n-(-1)^n)(q^{n-1}-(-1)^{n-1})}{q+1}.\end{aligned}\] Now for $n \geq 2$,
\[ T_1 = \frac{q^c}{q^n} \frac{q^2-q-2}{q+1} \leq q^{c-1}. \] Also note that $T_2<T_3$. It follows
that $\Var_{K^r}(f_C) \leq 1+q^{c-1}$. Hence $K^r(A)$ is
\begin{eqnarray*}
K^r\left( f_C \leq \frac{1}{4} q^{c-1} \right) & = & K^r\left( f_C - \EV_{K^r}(f_C) \leq \frac{1}{4} q^{c-1} - \EV_{K^r}(f_C) \right) \\
& \leq & K^r\left( f_C - \EV_{K^r}(f_C) \leq \frac{1}{4} q^{c-1} - \frac{1}{2} q^{c-1} \right) \\
& = &   K^r\left( f_C - \EV_{K^r}(f_C) \leq -\frac{1}{4} q^{c-1} \right) \\
& \leq & K^r\left( |f_C-\EV_{K^r}(f_C)| \geq \frac{1}{4} q^{c-1} \right).
\end{eqnarray*} By Chebyshev's inequality this is at most
\[ \frac{16}{q^{2c-2}} \Var_{K^r}(f_C) \leq \frac{16(1+q^{c-1})}{q^{2c-2}}.\] The theorem
now follows for $r$ even since \[ ||K_{\triv}^r-\pi||_{\mathsf{TV}} \geq |K_{\triv}^r(A) - \pi(A)|.\]

Suppose next that $r$ is odd. Then arguing as in the $r$ even case,
\[ \EV_{K^r}[f(C)] = \sqrt{\frac{(q^n-(-1)^n)(q^{n-1}-(-1)^{n-1})}{q+1}} \frac{-1}{q^r} \leq -.5q^{c-1} \] for $n \geq 3$.
Moreover \[ \Var_{K^r}(f_C) = 1 - T_1 + T_2 - T_3 \] where $T_1,T_2,T_3$ are as in the r even case. So $\Var_{K^r}(f_C)$ in
the r odd case is at most $\Var_{K^r}(f_C)$ in the $r$ even case, which is at most $1+q^{c-1}$. Now use Chebyshev's inequality
exactly as in the r even case.
\end{proof}

\section{Weil representations of symplectic groups}

Section \ref{Spodd} treats Weil representations of odd characteristic symplectic groups. Section
\ref{Speven} treats Weil representations of even characteristic symplectic groups.

\subsection{Odd characteristic symplectic groups} \label{Spodd}

Recall by Theorem \ref{charbound} that for the Markov chain on irreducible representations of a finite group $G$
given by tensoring with the representation $\eta$,

\[ 4 ||K^r_{\triv} - \pi ||_{\mathsf{TV}}^2 \leq \sum_{C \neq \{\Id\}} |C|
\left| \frac{\chi^{\eta}(C)}{d_{\eta}} \right|^{2r} .\]

Suppose that $q$ is odd. A theorem of Howe \cite{Ho} shows that if $\eta$ is the
(reducible) Weil representation, then for any element $g$
of $\Spnq$, \[ |\chi^{\eta}(g)| = q^{\frac{1}{2} \dim(\ker(g-1))}.\] In
particular, $\chi^{\eta}(\Id)=q^n$. Thus from Theorem \ref{charbound},

\begin{eqnarray*}
4 ||K^r_{\triv} - \pi ||_{\mathsf{TV}}^2 & \leq & \sum_{k=0}^{2n-1} \sum_{g \atop \dim(\ker(g-1)=k)} \frac{q^{rk}}{q^{2nr}} \\
& = & \frac{|\Spnq|}{q^{2nr}} \sum_{k=0}^{2n-1} q^{rk} \PV_{\Sp}(2n,k),
\end{eqnarray*} where $\PV_{\Sp}(2n,k)$ is the probability that a random element of
$\Spnq$ has a $k$-dimensional fixed space.

The following theorem is from Rudvalis and Shinoda \cite{RS} and holds in both odd and even
characteristic.

\begin{theorem} \label{RudShi}
\begin{enumerate}[\rm(i)]
\item The proportion $\PV_{\Sp}(2n,2k)$ is equal to
\[ \frac{1}{|\Spkq|} \sum_{i=0}^{n-k} \frac{(-1)^i q^{i(i+1)}}{|\Spiq| q^{2ik}}.\]

\item The proportion $\PV_{\Sp}(2n,2k+1)$
is equal to \[ \frac{1}{|\Spkq| q^{2k+1}} \sum_{i=0}^{n-k-1} \frac{(-1)^i q^{i(i+1)}}
{|\Spiq| q^{2i(k+1)}}. \]
\end{enumerate}
\end{theorem}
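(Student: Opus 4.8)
The plan is to follow the cycle-index generating function method of Rudvalis and Shinoda \cite{RS}. Form the bivariate series
\[
F(u,x)=\sum_{n\ge0}\frac{u^n}{|\Spnq|}\sum_{g\in\Spnq}x^{\dim\ker(g-1)},
\]
so that $\PV_{\Sp}(2n,d)$ is the coefficient of $x^d$ in $[u^n]F(u,x)$. The crucial structural input separates out the eigenvalue $1$: if $g\in\Spnq$ has Jordan decomposition $g=su$, then $\ker(g-1)\subseteq\ker(s-1)=:W$, where $W$ is a nondegenerate subspace of the natural module, $g$ restricts to a unipotent element $v\in\Sp(W)$ with $\ker(g-1)=\ker(v-1)$, and $g$ restricted to $W^\perp$ has no eigenvalue $1$. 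Conversely every triple $(W,v,g')$ --- $W$ nondegenerate of dimension $2m$, $v\in\Sp(W)$ unipotent, $g'\in\Sp(W^\perp)$ with $\det(g'-1)\ne0$ --- comes from a unique $g$. Since $\Spnq$ is transitive on nondegenerate $2m$-dimensional subspaces with stabilizer $\Sp(W)\times\Sp(W^\perp)$ (Witt's theorem), there are $|\Spnq|/(|\Sp_{2m}(q)|\,|\Sp_{2(n-m)}(q)|)$ of them, and the decomposition becomes the factorization
\[
F(u,x)=A(u,x)\,B(u),\qquad A(u,x)=\sum_{m\ge0}\frac{u^m}{|\Sp_{2m}(q)|}\sum_{v\ \mathrm{unipotent}}x^{\dim\ker(v-1)},
\]
where $B(u)=\sum_{j\ge0}\frac{u^j}{|\Sp_{2j}(q)|}\,\#\{g'\in\Sp_{2j}(q):\det(g'-1)\ne0\}$. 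Putting $x=1$ gives $A(u,1)B(u)=F(u,1)=(1-u)^{-1}$, so $B(u)=\bigl((1-u)A(u,1)\bigr)^{-1}$ and everything reduces to evaluating $A(u,x)$.

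Next I would compute $A(u,x)$ from the classification of unipotent classes of $\Spnq$: for $q$ odd these are indexed by partitions $\lambda$ of $2m$ in which odd parts have even multiplicity, with a further two-fold choice for each even part size occurring, the fixed space dimension $\dim\ker(v-1)$ equals the number of parts of $\lambda$, and $|C_{\Sp(W)}(v_\lambda)|$ is given by Wall's explicit product over the distinct part sizes (symplectic-type factors for odd parts, orthogonal-type factors for even parts, times a power of $q$). Summing $\sum_m u^m\sum_{\lambda\vdash2m}x^{\#\mathrm{parts}(\lambda)}/|C_{\Sp(W)}(v_\lambda)|$ and using the standard manipulations of such $q$-series yields a closed product for $A(u,x)$; dividing as above produces $F(u,x)$, and extracting $[u^nx^d]$ and separating the cases $d=2k$ and $d=2k+1$ gives the two displayed formulas. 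As an equivalent alternative one can compute only the moments, using $\sum_{g\in\Spnq} q^{t\dim\ker(g-1)}=|\Spnq|\cdot\#\{\Spnq\text{-orbits on }(\F_q^{2n})^t\}$ (orbit counting, since $q^{\dim\ker(g-1)}=\#\{v:gv=v\}$), and then recover $\PV_{\Sp}(2n,\cdot)$ by $q$-Vandermonde inversion since $\dim\ker(g-1)\in\{0,\dots,2n\}$.

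The main obstacle is purely the $q$-series bookkeeping: converting the partition sum weighted by Wall's centralizer orders into a closed product, and then simplifying the extracted coefficient into exactly the stated shape --- the even- and odd-dimensional fixed spaces lead to slightly different $q$-binomial identities, which is why the result is stated in two parts. One must also check that the answer is genuinely characteristic-free: for $q$ even the unipotent classes of $\Spnq$ and their centralizer orders differ, but the various contributions reorganize to the same totals. A completely elementary variant avoids generating functions entirely: compute $\#\{g\in\Spnq:\ker(g-1)=U\}$ for a fixed subspace $U$ by M\"obius inversion over the lattice of subspaces containing $U$, starting from the fact that the pointwise stabilizer of $U$ in $\Spnq$ has order $q^{\,2nr-r\dim U-\binom r2}\,|\Sp_{2n-\dim U-r}(q)|$ where $r=\dim(U\cap U^\perp)$, and then sum over all $U$ of the given dimension. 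Here the dependence of this stabilizer order on the radical dimension $r$ --- which has no analogue for $\mathrm{GL}$ or $\mathrm{GU}$ --- is the extra complication that must be carried along and summed out; this is essentially the content of the Rudvalis--Shinoda computation.
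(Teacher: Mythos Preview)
The paper does not prove this statement at all: it is quoted verbatim from the Rudvalis--Shinoda technical report \cite{RS} (``The following theorem is from Rudvalis and Shinoda \cite{RS} and holds in both odd and even characteristic''), and is used as a black box in the proofs of Theorems \ref{oddcharmain} and \ref{evenupbound}. So there is no ``paper's own proof'' to compare against.

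Your proposal is therefore doing strictly more than the paper does. As an outline it is reasonable and is indeed in the spirit of \cite{RS}: the factorization $F(u,x)=A(u,x)B(u)$ via the generalized $1$-eigenspace, the inversion $B(u)=((1-u)A(u,1))^{-1}$, and the subsequent extraction of coefficients is the standard cycle-index route to these fixed-space distributions. The alternative you mention --- computing the moments $\sum_g q^{t\dim\ker(g-1)}$ via orbit counting and then inverting --- is also a known and clean way to reach the same formulas. That said, what you have written is a plan rather than a proof: the passages ``summing \ldots\ and using the standard manipulations of such $q$-series yields a closed product'' and ``the main obstacle is purely the $q$-series bookkeeping'' are exactly where all the work lies, and you have not carried any of it out. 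If you actually want to supply a proof here (as opposed to citing \cite{RS} as the paper does), you would need to fill in those computations explicitly, and in particular verify the characteristic-free claim, since the unipotent class data for $\Spnq$ genuinely differ between odd and even $q$.
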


Theorem \ref{oddcharmain} gives an upper bound on the convergence rate of our tensor product Markov chain.

\begin{theorem} \label{oddcharmain} Suppose that $q$ is odd, and let $K$ be the Markov chain on irreducible
representations of $\Spnq$ given by tensoring with the Weil representation. Then
for $r=2n+c$ with $c>0$,
\[ ||K_{\triv}^r-\pi||_{\mathsf{TV}} \leq \frac{1}{2 \sqrt{q^c-1}}.\]
\end{theorem}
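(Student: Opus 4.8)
The plan is to run the argument of Theorem~\ref{unitbound} in the symplectic setting, with Howe's bound $|\chi^{\eta}(g)| = q^{\frac12 \dim\ker(g-1)}$ and the Rudvalis--Shinoda counts of Theorem~\ref{RudShi} replacing their unitary analogues. The starting point is the inequality already derived above,
\[ 4\,\|K_{\triv}^r - \pi\|_{\mathsf{TV}}^2 \;\le\; \frac{|\Spnq|}{q^{2nr}}\sum_{k=0}^{2n-1} q^{rk}\,\PV_{\Sp}(2n,k). \]
Writing $N(2n,k) = |\Spnq|\,\PV_{\Sp}(2n,k)$ for the number of elements of $\Spnq$ with a $k$-dimensional fixed space and re-indexing by the \emph{codimension} $j = 2n-k$ of the fixed space, the right-hand side becomes $\sum_{j=1}^{2n} q^{-rj}\,N(2n,2n-j)$; substituting $r = 2n+c$ turns the $j$-th summand into $q^{-cj-j(j-1)/2}$ once $N(2n,2n-j)$ is replaced by $q^{2nj - j(j-1)/2}$.

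The crux is therefore the counting estimate, analogous to Corollary~\ref{countfixer}: for every $j\ge 1$,
\[ N(2n,2n-j) \;\le\; q^{\,2nj - j(j-1)/2}. \]
I would prove this by separating the even and odd cases $k=2m$ and $k=2m+1$ and invoking parts (i) and (ii) of Theorem~\ref{RudShi}. In both cases one factors off the ratio $|\Spnq|/|\Spmq| = q^{n^2-m^2}\prod_{i=m+1}^{n}(q^{2i}-1) \le q^{\,2n^2+n-2m^2-m}$ (using $|\Spnq| = q^{n^2}\prod_{i=1}^n(q^{2i}-1)$), and bounds the remaining alternating sum $\sum_i (-1)^i q^{i(i+1)}/(|\Spiq|\,q^{2i\ell})$ (with $\ell = m$ or $\ell = m+1$) by its leading term $1$: the summands are positive and strictly decreasing in magnitude, the ratio of consecutive magnitudes being $q/((q^{2i+2}-1)q^{2\ell}) < 1$. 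A short exponent computation, substituting $m = n - j/2$ when $j$ is even and $m = n-(j+1)/2$ when $j$ is odd (and, in the odd case, absorbing the extra factor $q^{-(2m+1)}$ from part (ii)), then shows that the exponents $2n^2+n-2m^2-m$ and $2n^2+n-2m^2-3m-1$ both collapse to $2nj - j(j-1)/2$.

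Granting the estimate, the chain of inequalities reads
\[ 4\,\|K_{\triv}^r-\pi\|_{\mathsf{TV}}^2 \;\le\; \sum_{j=1}^{2n} q^{-cj - j(j-1)/2} \;\le\; \sum_{j\ge 1} q^{-cj} \;=\; \frac{1}{q^c-1}, \]
valid because $c>0$, and taking square roots gives $\|K_{\triv}^r-\pi\|_{\mathsf{TV}} \le \frac{1}{2}(q^c-1)^{-1/2}$, as claimed. I expect the only real difficulty to be bookkeeping: carrying the even/odd dimension dichotomy of the Rudvalis--Shinoda formulas through the exponent arithmetic so that both parities yield the single clean bound $q^{2nj-j(j-1)/2}$, and checking in each case that the alternating sums really are dominated by their first term. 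Notably, the bound needs no error factor of the form $1 + q^{-\text{something}}$ (unlike Corollary~\ref{countfixer}), which is what makes the final geometric summation land exactly on $1/(q^c-1)$.
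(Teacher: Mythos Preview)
Your proposal is correct and follows essentially the same route as the paper: both apply Theorem~\ref{charbound}, bound the Rudvalis--Shinoda alternating sums of Theorem~\ref{RudShi} by their leading term~$1$, estimate $|\Spnq|/|\Sp_{2m}(q)|$ by the obvious power of $q$, and reduce to the geometric series $\sum_{j\ge 1} q^{-cj}=1/(q^c-1)$. The only difference is cosmetic: the paper keeps the even and odd fixed-space dimensions separate throughout and obtains $1/(q^{2c}-1)+q^c/(q^{2c}-1)=1/(q^c-1)$, whereas you package both parities into the single clean estimate $N(2n,2n-j)\le q^{2nj-j(j-1)/2}$ before summing; the exponents match exactly. (One wording slip: the summands in the alternating sum are not ``positive'' but alternate in sign---what you need, and what your ratio computation shows, is that their magnitudes decrease.)
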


\begin{proof} From the above discussion we conclude that an upper bound on $4 ||K^r_{\triv} - \pi ||_{\mathsf{TV}}^2$ is given by
\[ \frac{|\Spnq|}{q^{2nr}} \left[ \sum_{k=0}^{n-1} \PV_{\Sp}(2n,2k) q^{2rk} + \sum_{k=0}^{n-1}
\PV_{\Sp}(2n,2k+1) q^{r(2k+1)} \right]. \] From Theorem \ref{RudShi}, it is easily seen that
\[ \PV_{\Sp}(2n,2k) \leq \frac{1}{|\Spkq|} \ , \ \PV_{\Sp}(2n,2k+1) \leq \frac{1}{|\Spkq|q^{2k+1}}.\]

First let's upper bound \[ \frac{|\Spnq|}{q^{2nr}} \sum_{k=0}^{n-1} \PV_{\Sp}(2n,2k) q^{2rk} .\] Replacing $k$ by $n-1-k$, we need to upper bound
\[ \frac{|\Spnq|}{q^{2nr}} \sum_{k=0}^{n-1} \frac{q^{2r(n-1-k)}}{|Sp(2n-2-2k,q)|} =  \sum_{k=0}^{n-1} \frac{1}{q^{2r(k+1)}} \frac{|\Spnq|}{|Sp(2n-2-2k,q)|}.\] Since \[ |\Spnq| = q^{2{n^2+n}} (1-1/q^2) (1-1/q^4) \cdots (1-1/q^{2n}),\] this is at most
\begin{eqnarray*}
 \sum_{k=0}^{n-1} \frac{1}{q^{2r(k+1)}} \frac{q^{2n^2+n}}
{q^{2(n-k-1)^2+n-k-1}}
& = & \sum_{k=0}^{n-1} \frac{1}{q^{2r(k+1)} q^{-4n-4kn+3k+2k^2+1}} \\
& \leq & \sum_{k=0}^{n-1} \frac{1}{q^{2r(k+1)-4n-4kn}}.
\end{eqnarray*} Setting $r=2n+c$ with $c>0$, this is at most 
\[ \sum_{k=0}^{n-1} \frac{1}{q^{2c(k+1)}} \leq \frac{1}{q^{2c}-1}.\]

Next let's upper bound 
\[ \frac{|\Spnq|}{q^{2nr}} \sum_{k=0}^{n-1} \PV_{\Sp}(2n,2k+1) q^{r(2k+1)}
\leq \frac{|\Spnq|}{q^{2nr}} \sum_{k=0}^{n-1} \frac{q^{r(2k+1)}}{|\Spkq| q^{2k+1}}.\]
Replacing $k$ by $n-1-k$, we need to upper bound
\begin{eqnarray*}
& & \sum_{k=0}^{n-1} \frac{|\Spnq|}{|\Sp(2n-2-2k,q)|} \frac{1}{q^{r(2k+1)}} \frac{1}{q^{2n-2k-1}} \\
& \leq & \sum_{k=0}^{n-1} \frac{q^{2n^2+n}}{q^{2(n-k-1)^2+n-k-1}}
\frac{1}{q^{r(2k+1)}} \frac{1}{q^{2n-2k-1}} \\
& = & \sum_{k=0}^{n-1} \frac{1}{q^{-2n-4kn+2k^2+k+r(2k+1)}} \\
& \leq & \sum_{k=0}^{n-1} \frac{1}{q^{-2n-4kn+r(2k+1)}}.
\end{eqnarray*} Setting $r=2n+c$ with $c>0$, this is at most
\[ \sum_{k=0}^{n-1} \frac{1}{q^{c(2k+1)}} \leq \frac{1}{q^c(1-1/q^{2c})}.\]

Summarizing, we have shown that
\[ 4 ||K_{\triv}^r - \pi||_{\mathsf{TV}}^2 \leq \frac{1}{q^{2c}-1} + \frac{q^c}{q^{2c}-1} = \frac{1}{q^c-1}.\]
The result follows by taking square roots. \end{proof}

Next we prove a lower bound for the case $q=1$ mod 4. This is easier than the case $q=3$ mod 4 for two reasons.
According to \cite{FZ}, in this case every element in $\Spnq$ is real (i.e. conjugate to its inverse).
%{\bf maybe prove the next two assertions} 
Hence if $C$ is one of the classes of transvections, then the function $f_C$ defined by \begin{equation}
\label{used}  f_C(\rho)= \frac{|C|^{1/2} \chi^{\rho}(C)}{d_{\rho}} \end{equation}
is real-valued. Second, $\chi^{\eta}(C)$ is real-valued (so equal to $\pm q^{(2n-1)/2}$ depending on which class of transvections you look at).
We also remark that the proof of Theorem \ref{lower1mod4} does not require $r$ to be even (so $c$ could be a half-integer).

\begin{theorem} \label{lower1mod4} Suppose that $q$ is congruent to 1 mod 4. Let $K_{\triv}^r$ be the distribution on $\Irr(\Spnq)$ given by $r$ steps of tensoring with the Weil representation $\eta$. Then given $\epsilon>0$, there exists $c>0$ (depending only on $\epsilon$) such that \[ ||K_{\triv}^r - \pi||_{\mathsf{TV}} \geq 1 - \epsilon\] for
$r=2n-2c$ and sufficiently large $n$.
\end{theorem}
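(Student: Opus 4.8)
The plan is to follow the pattern of Theorems \ref{goalGL1} and \ref{goalGU}, using $q \equiv 1 \pmod 4$ to keep everything real-valued. Fix $C$ to be one of the two conjugacy classes of transvections of $\Spnq$, and set $f_C(\rho) = |C|^{1/2}\chi^{\rho}(C)/d_{\rho}$. By \cite{FZ}, when $q \equiv 1 \pmod 4$ every element of $\Spnq$ is real, so $C = C^{-1}$ and $f_C$ is real-valued; and since a transvection $g$ has $\dim\ker(g-1) = 2n-1$, Howe's theorem together with this reality gives $\chi^{\eta}(C) = \varepsilon\, q^{(2n-1)/2}$ for a sign $\varepsilon \in \{\pm 1\}$. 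As in the earlier proofs, column orthogonality shows $f_C$ has mean $0$ and variance $1$ under Plancherel measure $\pi$, so Chebyshev's inequality gives $\pi(|f_C| \ge t) \le t^{-2}$. Writing $r = 2n - 2c$ and letting $A$ be the event $\{\varepsilon^r f_C \le \tfrac14 q^{c}\}$, we get $\pi(A) \ge 1 - 16\, q^{-2c}$.

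I would then estimate the first two moments of $f_C$ under $K^r_{\triv}$ by means of Proposition \ref{combine}. Since $|C| = (q^{2n}-1)/2$, taking $s = 1$ gives $\EV_{K^r}[f_C] = |C|^{1/2}(\chi^{\eta}(C)/d_{\eta})^r = \varepsilon^r \sqrt{(q^{2n}-1)/2}\; q^{c-n}$, whose absolute value exceeds $\tfrac12 q^{c}$ once $n$ is large. Taking $s = 2$ gives $\EV_{K^r}[f_C^2] = |C|\sum_T p_{2,C}(T)\,(\chi^{\eta}(T)/d_{\eta})^r$, the sum running over the conjugacy classes $T$ of $\Spnq$ occurring as a product of two elements of $C$; here one invokes the enumerative description, established below, of the product of two independent uniform transvections of $\Spnq$. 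Every such $T$ has $\dim\ker(T-1) \in \{2n,\,2n-1,\,2n-2\}$, so $(\chi^{\eta}(T)/d_{\eta})^r$ is, up to sign, one of $1$, $q^{c-n}$, $q^{2c-2n}$. The class $\{\Id\}$ occurs with probability $1/|C|$ and contributes exactly $1$; a product of two elements of $C$ is a transvection only when their defining vectors are proportional, an event of probability $O(q^{1-2n})$, so the $\dim\ker = 2n-1$ classes contribute in total at most $O(q^{c+1-n})$ in absolute value; and the $\dim\ker = 2n-2$ classes contribute at most $|C|\,q^{2c-2n} = (\EV_{K^r}[f_C])^2$ in absolute value, since their probabilities sum to at most $1$. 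Subtracting $(\EV_{K^r}[f_C])^2$ thus leaves $\Var_{K^r}(f_C) \le 1 + O(q^{c+1-n}) \le 2$ for $n$ large (the exact enumeration sharpens the error term, just as the cancellation of $T_2$ against $T_3$ did in the $\GL$ and $\GU$ proofs). A second application of Chebyshev's inequality then gives $K^r(A) = O(q^{-2c})$, since on $A$ the quantity $\varepsilon^r f_C$ lies at least $\tfrac14 q^{c}$ below its $K^r$-mean $\varepsilon^r \EV_{K^r}[f_C] \ge \tfrac12 q^{c}$. Therefore $\|K^r_{\triv} - \pi\|_{\mathsf{TV}} \ge \pi(A) - K^r(A) \ge 1 - O(q^{-2c})$, so any $c$ large enough in terms of $\epsilon$ works. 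Note that no step used the parity of $r$, which is why $c$ is allowed to be a half-integer.

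The step I expect to be the main obstacle — the one genuinely new ingredient beyond the $\GL$ and $\GU$ template — is the enumerative result it invokes: the distribution $p_{2,C}(\cdot)$ of a product of two independent uniform transvections of $\Spnq$. Because $q$ is odd the transvections split into two conjugacy classes, so one must keep track of which class (square versus non-square defining parameter) each factor belongs to, and how this interacts with the value $[v,w]$ of the symplectic form on the two defining vectors; this forces a case division according as $v$ and $w$ are proportional, span a non-degenerate plane (where the product restricts on that plane to a product of two transvections in $\SLtwoq$ of a prescribed trace), or span a totally isotropic plane (where the product is a unipotent element squaring to the identity), and each case must be matched to the classes $T$ it produces together with their cardinalities. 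A crude bound, as sketched above, already suffices for the present theorem, but the precise count is needed for the companion lower bound when $q \equiv 3 \pmod 4$.
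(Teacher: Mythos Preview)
Your proposal is correct and follows essentially the same line as the paper: fix a transvection class $C$, use Proposition~\ref{combine} with $s=1,2$, and bound $\Var_{K^r}(f_C)$ by showing the rank-$2$ contribution is dominated by $(\EV_{K^r}[f_C])^2$. The only notable difference is that the paper invokes the exact formula of Theorem~\ref{sp-odd-prob1} (which requires the full conjugacy-class analysis of products of two transvections from $C$) to obtain $\Var_{K^r}(f_C) \le 1 + q^c/2$, whereas your crude bound $(\text{rank-}2\ \text{contribution}) \le |C|\,q^{-r}$ bypasses that enumeration and yields $\Var_{K^r}(f_C) \le 2$ for large $n$; either suffices here, and you are right that the crude version is enough for $q\equiv 1 \pmod 4$. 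The paper also simply picks $C$ so that $\chi^{\eta}(C)=+q^{(2n-1)/2}$ rather than carrying the sign $\varepsilon$, but this is cosmetic.
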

 
\begin{proof} Let $C$ be the class of transvections satisfying $\chi^{\eta}(C)=q^{(2n-1)/2}$, and define $f_C$ by \eqref{used}. So $f_C$ is real
valued with mean 0 and variance 1 under Plancherel measure, and letting $A$ be the event that $f_C \leq \frac{q^c}{4}$, it follows from Chebyshev's
inequality that $\pi(A) \geq 1 - \frac{16}{q^{2c}}$.

From Proposition \ref{combine} with $s=1$, it follows that
\[ \EV_{K^r}(f_C) = |C|^{1/2} \left( \frac{\chi^{\eta}(C)}{d_{\eta}} \right)^r = \sqrt{\frac{q^{2n}-1}{2}} \frac{1}{q^{r/2}}.\] Since
$r=2n-2c$, this is equal to \[ \sqrt{\frac{q^{2n}-1}{2 q^{2n}}} q^c \geq \frac{1}{2} q^c.\] By Proposition \ref{combine} with $s=2$,
it follows that
\[ \EV_{K^r}[f_C^2] = |C| \sum_T p_2(T) \left[ \frac{\chi^{\eta(T)}}{d_{\eta}} \right]^r = \frac{q^{2n}-1}{2 q^{nr}} \sum_T p_2(T) \chi^{\eta}(T)^r,\]
where $p_2(T)$ is the chance that the product of two random elements of $C$ lies in $T$, and the sum is over all conjugacy classes $T$.
Theorem \ref{sp-odd-prob1} gives an exact formula for $\sum_T p_2(T) \chi^{\eta}(T)^r$, so we conclude that
\begin{eqnarray*}
\EV_{K^r}[f_C^2] & = & \frac{1}{2 q^{2r}} \left[ 2q^{2r} + q^{3r/2} \frac{q-1}{2} + (-1)^r q^{3r/2} \frac{q-5}{2} + q^r(q^{2n}-q)  \right] \\
& \leq & \frac{1}{2 q^{2r}} [2q^{2r} + q^{3r/2}(q-1) + q^r(q^{2n}-q)] \\
& = & 1 + \frac{1}{q^{r/2}} \frac{q-1}{2} + \frac{q^{2n}-q}{2 q^r}.
\end{eqnarray*} So \[ \Var_{K^r}(f_C^2) \leq 1 + T_1 + T_2 - T_3,\] where\[ T_1 =  \frac{1}{q^{r/2}} \frac{q-1}{2} \ , \ T_2 = \frac{q^{2n}-q}{2 q^r} \ , \ T_3 = \frac{q^{2n}-1}{2 q^r}.\] Since $T_1 = \frac{1}{q^{n-c}} \frac{q-1}{2}$ and $T_2<T_3$, it follows that
\[ \Var_{K^r}(f_C) \leq 1 + \frac{1}{2}(q-1) q^{c-n} \leq 1 + \frac{q^c}{2}.\]

Now
\begin{eqnarray*}
K^r \left( f_C \leq \frac{q^c}{4} \right) & = & K^r \left( f_C - \EV_{K^r}(f_C) \leq \frac{q^c}{4} - \EV_{K^r}(f_C) \right) \\
& \leq &  K^r \left( f_C - \EV_{K^r}(f_C) \leq \frac{q^c}{4} -\frac{q^c}{2} \right) \\
& = & K^r \left( f_C - \EV_{K^r}(f_C) \leq -\frac{q^c}{4} \right) \\
& \leq & K^r \left( |f_C - \EV_{K^r}(f(C))| \geq \frac{q^c}{4} \right).
\end{eqnarray*} By Chebyshev's inequality, this is at most
\[ \frac{\Var_{K^r}(f_C)}{(q^c/4)^2} \leq \frac{16(1+q^c/2)}{q^{2c}}.\]
This completes the proof since \[  ||K_{\triv}^r - \pi||_{\mathsf{TV}} \geq |K^r(A) - \pi(A)|.\]
\end{proof}

Next we treat the case that $q=3$ mod 4. This is more difficult than the $q=1$ mod 4 case since neither $\chi^{\rho}$ (where $\rho$ is an irreducible representation) nor $\chi^{\eta}$ need be real-valued when evaluated on a transvection. So we define a function $f_*$ on $\Irr(\Spnq)$ by \begin{equation} \label{realval} f_*(\rho)= \frac{|C_1|^{1/2} \chi^{\rho}(C_1)}{\sqrt{2} d_{\rho}} +  \frac{|C_2|^{1/2} \chi^{\rho}(C_2)}{\sqrt{2} d_{\rho}},\end{equation} where $C_1,C_2$ are the two conjugacy classes of transvections (each of size $\frac{q^{2n}-1}{2}$). Since $C_1^{-1}=C_2$, the function $f_*$ is real-valued. We need the following modification of Proposition \ref{combine}.

\begin{prop} \label{modify} Suppose that $q=3$ mod 4. Then \[ \EV_{K^r}[(f_*)^s] = (q^{2n}-1)^{s/2} \sum_T p_s(T) \left( \frac{\chi^{\eta}(T)}{d_{\eta}} \right)^r,\] where $p_s(T)$ is the probability that the product of s uniformly chosen transvections belongs to  the conjugacy class $T$, and $T$ ranges over all conjugacy classes of $\Spnq$. \end{prop}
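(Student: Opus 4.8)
The plan is to recognize $f_*$ as the normalized central character attached not to a single conjugacy class but to the set $D := C_1 \cup C_2$ of all transvections, and then to repeat, essentially verbatim, the computation behind Proposition~\ref{combine} with $C$ replaced by $D$. The first step is the elementary identity
\[ f_*(\rho) = \frac{1}{\sqrt{|D|}} \cdot \frac{|C_1|\chi^\rho(C_1) + |C_2|\chi^\rho(C_2)}{d_\rho}, \]
valid because $|C_1| = |C_2| = \tfrac{q^{2n}-1}{2}$ (so $\tfrac{|C_i|^{1/2}}{\sqrt 2} = \tfrac{|C_i|}{\sqrt{|D|}}$) and $|D| = q^{2n}-1$. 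Writing $\hat T = \sum_{g \in T} g$ for a conjugacy class $T$, and recalling that the central element $\hat T$ acts on $\rho$ by the scalar $|T|\chi^\rho(T)/d_\rho$, this says that $f_*(\rho)$ is $|D|^{-1/2}$ times the scalar by which $\hat D := \hat{C_1} + \hat{C_2} = \sum_{g \in D} g$ acts on $\rho$.

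Next, since the map carrying a central element of $\C[\Spnq]$ to the scalar by which it acts on $\rho$ is an algebra homomorphism, $(f_*(\rho))^s$ equals $|D|^{-s/2}$ times the scalar by which $\hat D^{\,s}$ acts on $\rho$. I would then expand $\hat D^{\,s} = \sum_g M_s(g)\,g$, where $M_s(g)$ counts factorizations $g = d_1 \cdots d_s$ with all $d_i \in D$; the function $M_s$ is constant on conjugacy classes, and since $|C_1| = |C_2|$ picking a uniformly random element of $D$ is the same as picking a uniformly random transvection, so $p_s(T) = |T|\,M_s(T)/|D|^s$. Hence $\hat D^{\,s} = \sum_T \tfrac{p_s(T)\,|D|^s}{|T|}\,\hat T$, and applying the central-character map gives
\[ (f_*(\rho))^s = |D|^{s/2} \sum_T p_s(T)\,\frac{\chi^\rho(T)}{d_\rho}. \]

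Finally I would average against $K^r_\triv$. By Lemma~\ref{used0}(i), $K^r(\triv,\rho) = d_\rho\, m_\rho(\eta^r)/d_\eta^r$, so
\[ \EV_{K^r}[(f_*)^s] = \sum_\rho \frac{d_\rho\, m_\rho(\eta^r)}{d_\eta^r}\,|D|^{s/2} \sum_T p_s(T)\,\frac{\chi^\rho(T)}{d_\rho} = \frac{|D|^{s/2}}{d_\eta^r} \sum_T p_s(T) \sum_\rho m_\rho(\eta^r)\,\chi^\rho(T), \]
and since $\sum_\rho m_\rho(\eta^r)\,\chi^\rho(T) = \chi^{\eta^r}(T) = \chi^\eta(T)^r$ while $|D| = q^{2n}-1$, this is exactly the asserted formula. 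The argument is routine; the only point needing a little care is the bookkeeping in the second step, namely that the measure ``product of $s$ uniformly chosen transvections'' is literally the one obtained by expanding $\hat D^{\,s}$ — which holds precisely because the two transvection classes have equal cardinality. Equivalently, one may observe that the proof of Proposition~\ref{combine} never used that $C$ was a single conjugacy class rather than a union of classes of equal size, and simply invoke it with $C$ replaced by $D$.
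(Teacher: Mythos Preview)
Your proof is correct and follows essentially the same route as the paper's: both compute $\EV_{K^r}[(f_*)^s]$ by expressing $K^r(\triv,\rho)$ via Lemma~\ref{used0}(i) and then relating $(f_*(\rho))^s$ to random-walk probabilities through the central character (what the paper calls ``Fourier analysis, see \cite{Stan}''). The only difference is organizational: the paper expands $f_*(\rho)^s$ by the binomial theorem, obtains intermediate probabilities $p_{s,a}(T)$ for products of $a$ elements of $C_1$ and $s-a$ elements of $C_2$, and then re-sums $\sum_a \binom{s}{a}2^{-s}p_{s,a}(T) = p_s(T)$; you instead treat $D = C_1 \cup C_2$ as a single conjugation-invariant set from the outset and work directly with $\hat D^{\,s}$, bypassing the binomial step entirely. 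Your packaging is slightly cleaner, and your closing remark---that the proof of Proposition~\ref{combine} never used that $C$ was a single class rather than a union of equal-size classes---is exactly the point.
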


\begin{proof} The proof is similar to that of Proposition \ref{combine}. 
\begin{eqnarray*}
\EV_{K^r}[(f_*)^s] & = & \sum_{\rho} K^r(\triv,\rho) f_*(\rho)^s \\
& = & \sum_{\rho} \frac{d_{\rho} m_{\rho}(\eta^r)}{d_{\eta}^r} f_*(\rho)^s \\
& = & \sum_{\rho} \frac{d_{\rho}}{|G|} \sum_g \overline{\chi^{\rho}(g)} \frac{\chi^{\eta}(g)^r}{d_{\eta}^r} f_*(\rho)^s.
\end{eqnarray*} Expanding $f_*(\rho)^s$ by the binomial theorem and using the fact that $|C_1|=|C_2|$, this becomes
\[ \frac{|C_1|^{s/2}}{2^{s/2}} \sum_T \left( \frac{\chi^{\eta}(T)}{d_{\eta}} \right)^r \sum_{a=0}^s {s \choose a}
\frac{|T|}{|G|} \sum_{\rho} d_{\rho}^2 \frac{\overline{\chi^{\rho}(T)}}{d_{\rho}}
\left( \frac{\chi^{\rho}(C_1)}{d_{\rho}}\right)^a \left( \frac{\chi^{\rho}(C_2)}{d_{\rho}}\right)^{s-a}.\] 

By Fourier analysis
(see page 471 of \cite{Stan}), this is equal to
\[ 2^{s/2} |C_1|^{s/2} \sum_T \left[ \frac{\chi^{\eta}(T)}{d_{\eta}} \right]^r \sum_{a=0}^s \frac{{s \choose a}}{2^s}
p_{s,a}(T), \] where $p_{s,a}(T)$ is the probability that the product of $a$ elements of $C_1$ with $s-a$ elements of $C_2$
(in any fixed order) belongs to the conjugacy class $T$. Clearly
\[ \sum_{a=0}^s \frac{{s \choose a}}{2^s} p_{s,a}(T) = p_s(T),\] so the result follows since $2 |C_1| = q^{2n}-1$.
 \end{proof}
 
Now we prove the desired lower bound. We remark that the proof does require $r$ to be even.

\begin{theorem} \label{lower3mod4} Suppose that $q$ is congruent to 3 mod 4 and that $r$ is even. Let $K_{\triv}^r$ be the distribution on $\Irr(\Spnq)$ given by $r$ steps of tensoring with the Weil representation $\eta$. Then given $\epsilon>0$, there exists $c>0$ (depending only on $\epsilon$) such that \[ ||K_{\triv}^r - \pi||_{\mathsf{TV}} \geq 1 - \epsilon\] for $r=2n-2c$ and sufficiently large $n$.
\end{theorem}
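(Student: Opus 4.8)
The plan is to run the same Chebyshev-inequality machinery used in Theorem \ref{lower1mod4}, but now using the real-valued function $f_*$ from \eqref{realval} and the modified moment formula of Proposition \ref{modify} in place of Proposition \ref{combine}. First I would record the behavior of $f_*$ under Plancherel measure: since $f_*$ is a normalized sum of the two transvection character ratios and $C_2 = C_1^{-1}$, the orthogonality relations give $\EV_\pi(f_*) = 0$ and $\EV_\pi(f_*^2) = 1$ (the cross term $\EV_\pi\big((|C_1|^{1/2}\chi^\rho(C_1)/d_\rho)(|C_2|^{1/2}\chi^\rho(C_2)/d_\rho)\big)$ vanishes because $C_1 \neq C_2$, while each square contributes $\tfrac12$). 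Hence, letting $A$ be the event $f_* \le \tfrac14 q^c$, Chebyshev gives $\pi(A) \ge 1 - 16/q^{2c}$.

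Next I would lower-bound $\EV_{K^r}(f_*)$ using Proposition \ref{modify} with $s=1$: $\EV_{K^r}(f_*) = (q^{2n}-1)^{1/2}\,(\chi^\eta(T_0)/d_\eta)^r$ where $T_0$ is the identity class contribution, i.e. $p_1(T)$ is supported on the (single) transvection conjugacy-class structure — more precisely $\EV_{K^r}(f_*) = (q^{2n}-1)^{1/2}\sum_T p_1(T)(\chi^\eta(T)/d_\eta)^r$, and since one step of the transvection walk lands in a transvection class, on which $|\chi^\eta| = q^{(2n-1)/2}$, this equals $(q^{2n}-1)^{1/2} q^{-r/2}$ up to a sign; with $r=2n-2c$ this is $\sqrt{(q^{2n}-1)/q^{2n}}\;q^c \ge \tfrac12 q^c$ for large $n$. (Here I would use that $r$ even guarantees the sign works out to $+$, exactly as the theorem's hypothesis requires — this is the role of the evenness assumption, paralleling the $r$-even vs.\ $r$-odd split in Theorem \ref{goalGU}.) Then for the variance I would invoke Proposition \ref{modify} with $s=2$ together with the enumerative input analogous to Theorem \ref{sp-odd-prob1} (the $q=3 \bmod 4$ analogue giving the exact value of $\sum_T p_2(T)\chi^\eta(T)^r$ for the product of two random transvections across the two classes). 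Plugging in and isolating the leading terms yields $\EV_{K^r}(f_*^2) \le 1 + T_1 + T_2$ with $T_1 = O(q^{c-n})$ and a term $T_2$ that is dominated by $(\EV_{K^r}(f_*))^2 \ge \tfrac14 q^{2c}$'s companion $T_3$, so that $\Var_{K^r}(f_*) \le 1 + \tfrac12 q^c$ for $n$ large.

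With these in hand the endgame is verbatim the computation in Theorem \ref{lower1mod4}: since $\EV_{K^r}(f_*) \ge \tfrac12 q^c$ and $A = \{f_* \le \tfrac14 q^c\}$, the event $A$ forces $|f_* - \EV_{K^r}(f_*)| \ge \tfrac14 q^c$, so Chebyshev gives $K^r(A) \le \Var_{K^r}(f_*)/(q^c/4)^2 \le 16(1 + q^c/2)/q^{2c}$. Combining with $\pi(A) \ge 1 - 16/q^{2c}$ and $\|K_{\triv}^r - \pi\|_{\mathsf{TV}} \ge |K^r(A) - \pi(A)|$ gives $\|K_{\triv}^r - \pi\|_{\mathsf{TV}} \ge 1 - 16 q^{-2c} - 16(1+q^c/2)q^{-2c}$, which exceeds $1 - \epsilon$ once $c$ is chosen large enough in terms of $\epsilon$ and $n$ is large.

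The main obstacle I anticipate is not the Markov-chain bookkeeping — that is entirely parallel to the $q=1\bmod 4$ case — but the enumerative ingredient: obtaining the exact formula for $\sum_T p_2(T)\chi^\eta(T)^r$ when the two factors are drawn uniformly and independently from $C_1 \cup C_2$ (equivalently, the weighted count of products of two transvections by the dimension of their common fixed space, keeping careful track of which of the two transvection classes each factor lies in). Because odd-characteristic symplectic transvections split into two classes, the product of two of them breaks into several sub-cases depending on whether the two transvection directions span a nondegenerate or totravector plane and on a quadratic-residue condition governing the class of the resulting element; assembling these contributions into the clean closed form needed above (and checking that, after multiplication by $\chi^\eta(T)^r$ and summation, the would-be troublesome middle term $(-1)^r q^{3r/2}\tfrac{q-5}{2}$-type contributions are controlled, using $r$ even) is the delicate part. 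I would handle this via an explicit orbit-counting argument on pairs of transvection directions in the symplectic space, isolating the degenerate locus where the product itself is a transvection (or the identity) and bounding the rest by its dominant exponential order in $q$.
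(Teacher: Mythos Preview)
Your overall architecture is right, but there is a genuine gap in the sign analysis of $\EV_{K^r}(f_*)$. You write that ``$r$ even guarantees the sign works out to $+$,'' but this is false when $q\equiv 3\pmod 4$. By \eqref{eq:weil2} the Weil character on the two transvection classes takes the values $\pm i\,q^{(2n-1)/2}$ (pure imaginary, since $\kappa=-1$). Proposition~\ref{modify} with $s=1$ gives
\[
\EV_{K^r}(f_*)=(q^{2n}-1)^{1/2}\cdot\tfrac12\bigl[(i)^r+(-i)^r\bigr]q^{-r/2}=(q^{2n}-1)^{1/2}\,(-1)^{r/2}\,q^{-r/2},
\]
so the sign is $(-1)^{r/2}$, not $(-1)^r$. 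When $r\equiv 2\pmod 4$ you get $\EV_{K^r}(f_*)\le -0.8\,q^c$, and then your event $A=\{f_*\le \tfrac14 q^c\}$ has $K^r(A)$ close to $1$, not close to $0$; the Chebyshev step $A\subseteq\{|f_*-\EV_{K^r}(f_*)|\ge \tfrac14 q^c\}$ simply fails. The paper handles this by splitting into the two subcases $r\equiv 0$ and $r\equiv 2\pmod 4$: in the first it uses your event, in the second it uses the mirror event $\{f_*\ge -\tfrac12 q^c\}$ and runs Chebyshev from the other side. You need to add this case split.

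A smaller point: the enumerative input you want is not ``the $q\equiv 3\bmod 4$ analogue of Theorem~\ref{sp-odd-prob1}'' but rather Theorem~\ref{sp-odd-prob2}, which computes $\sum_T \tilde p_2(T)\omega(T)^r$ for the product of two \emph{uniform} transvections (drawn from $C_1\cup C_2$), exactly matching the $p_s$ appearing in Proposition~\ref{modify}. That result is already stated for all odd $q$ and even $r$, with the $q\bmod 4$ dependence packaged into the factor $\kappa^{r/2}$; there is no separate case analysis left to do in the enumerative step. Your sketch of how to derive it by orbit-counting on pairs of transvection directions is along the right lines, but note that the rank-$2$ contribution collapses uniformly to $\omega(S)^2=q^{2n-2}$ regardless of which of the several classes $A_{3i},C_1(j),C_3(i),D_{2i}$ the product lands in, so the only delicate bookkeeping is in the rank-$\le 1$ locus; this is why Theorem~\ref{sp-odd-prob2} follows quickly from Theorem~\ref{sp-transv}.
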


\begin{proof} Define $f_*$ as in \eqref{realval}. So $f_*$ is real-valued, and it is easy to see from the orthogonality relations for irreducible characters
that if $\rho$ is chosen from Plancherel measure, then $\EV_{\pi}(f_*)=0$ and $\Var_{\pi}(f_*)=1$. So by Chebyshev's inequality,
\[ \pi \left(f_* \leq \frac{1}{2} q^c \right) \geq 1 - \frac{4}{q^{2c}}.\] 

We now consider two cases. Case 1 is that $r=0$ mod 4. By Proposition \ref{modify} with $s=1$, we have that
\begin{eqnarray*}
\EV_{K^r}(f_*) & = & (q^{2n}-1)^{1/2} \frac{1}{q^{nr}} \sum_T [\chi^{\eta}(T)]^r p_1(T) \\
& = &  (q^{2n}-1)^{1/2} \frac{1}{q^{nr}} \frac{1}{2} [\chi^{\eta}(C_1)^r + \chi^{\eta}(C_2)^r].
\end{eqnarray*} Now by \eqref{eq:weil2}, $\chi^{\eta}(C_1)$ and $\chi^{\eta}(C_2)$ are equal to
$iq^{(2n-1)/2}$ and $-i q^{(2n-1)/2}$, so since $r=0$ mod 4,
\[ \EV_{K^r}(f_*) = \sqrt{q^{2n}-1} \frac{1}{q^{r/2}} = \sqrt{q^{2n}-1} \frac{1}{q^{n-c}} \geq .8 q^c.\] 

By Proposition \ref{modify} with $s=2$ and Theorem \ref{sp-odd-prob2}, we have that 
\begin{eqnarray*}
\EV_{K^r}(f_*^2) & = & (q^{2n}-1) \frac{1}{q^{nr}} \sum_T [\chi^{\eta}(T)]^r p_2(T) \\
& = & \frac{(q^{2n}-1)}{q^{nr}} \frac{q^{(n-2)r}}{(q^{2n}-1)} (q^{2r} + (q-2)q^{3r/2} + q^r(q^{2n}-q)) \\
& = & 1 + \frac{(q-2)}{q^{r/2}} + \frac{(q^{2n}-q)}{q^r}.
\end{eqnarray*}

So $\Var_{K^r}(f) \leq 1 + T_1 + T_2 - T_3,$ where
\[T_1 = \frac{q-2}{q^{r/2}} \ , \ T_2 = \frac{q^{2n}-q}{q^r} \ , \ T_3 = \frac{q^{2n}-1}{q^r}.\]
Now $T_1 = \frac{q-2}{q^{n-c}} \leq q^c$ and $T_2<T_3$, so $\Var_{K^r}(f_*) \leq 1+q^c$.

Now
\begin{eqnarray*}
K^r \left(f_* \leq \frac{1}{2}q^c \right) & = & K^r \left( f_*-\EV_{K^r}(f_*) \leq \frac{1}{2}q^c - \EV_{K^r}(f_*) \right) \\
& \leq & K^r \left(f_* - \EV_{K^r}(f_*) \leq \frac{1}{2}q^c - .8q^c \right) \\
& = & K^r(f_*-\EV_{K^r}(f_*) \leq -.3q^c) \\
& \leq & K^r(|f_*-\EV_{K^r}(f_*)| \geq .3q^c).
\end{eqnarray*} By Chebyshev's inequality, this is at most
\[ \frac{\Var_{K^r}(f_*)}{.3^2 q^{2c}} \leq \frac{12 (1+q^c)}{q^{2c}}.\]
The result follows for Case 1 since
\[ ||K_{\triv}^r-\pi||_{\mathsf{TV}} \geq |K^r(A)-\pi(A)| \] for all $A$, and in particular for $A$ the event that $f_* \leq \frac{1}{2}q^c$.

Case 2 is that $r=2$ mod 4 and is very similar. First note that if $\rho$ is chosen from Plancherel measure, then
Chebyshev's inequality gives that \[ \pi \left(f_* \geq - \frac{1}{2} q^c \right) \geq 1 - \frac{4}{q^{2c}}.\] 

Arguing as in the $r=0$ mod 4 case, Proposition \ref{modify} with $s=1$ implies that  
\[ \EV_{K^r}(f_*) = \sqrt{q^{2n}-1} \frac{-1}{q^{r/2}} \leq -.8q^c.\]

By Proposition \ref{modify} with $s=2$ and Theorem \ref{sp-odd-prob2}, we have that
\begin{eqnarray*}
\EV_{K^r}(f_*^2) & = & \frac{(q^{2n}-1)}{q^{nr}} \sum_T [\chi^{\eta}(T)]^r p_2(T) \\
& = & \frac{(q^{2n}-1)}{q^{nr}} \frac{q^{(n-2)r}}{q^{2n}-1} (q^{2r}-(q-2)q^{3r/2}+q^r(q^{2n}-q)) \\
& \leq & 1 + \frac{q^{2n}-q}{q^r}.
\end{eqnarray*} So \[ \Var_{K^r}(f_*) \leq 1 + \frac{q^{2n}-q}{q^r} - \frac{q^{2n}-1}{q^r} \leq 1.\]

Now
\begin{eqnarray*}
K^r \left( f_* \geq \frac{-1}{2}q^c \right) & = & K^r \left( f_*-\EV_{K^r}(f_*) \geq \frac{-1}{2}q^c - \EV_{K^r}(f_*)  \right) \\
& \leq & K^r \left(   f_*-\EV_{K^r}(f_*) \geq \frac{-1}{2}q^c + .8q^c  \right) \\
& = & K^r (f_*-\EV_{K^r}(f_*) \geq .3q^c) \\
& \leq & K^r(|f_*-\EV_{K^r}(f_*)| \geq .3q^c). \end{eqnarray*} By Chebyshev's inequality this is at most
$$\frac{\Var_{K^r}(f_*)}{.3^2 q^{2c}} \leq \frac{12}{q^{2c}},$$ 
completing the proof as in the
$r=0$ mod 4 case. \end{proof}

\subsection{Even characteristic symplectic groups} \label{Speven}

For even characteristic symplectic groups there are two reducible Weil characters: {\it linear Weil character}
\[ \tau(g) = q^{\dim(\ker(g-1))} \] and {\it unitary Weil character}
\[ \zeta(g) = (-q)^{\dim(\ker(g-1))}, \]
see \cite[\S3]{GT}.

The upper bounds obtained by Theorem \ref{charbound} are the same for both of these characters due to the even exponent $2r$ in the
statement of Theorem \ref{charbound}.

\begin{theorem} \label{evenupbound} Suppose that $q$ is even, and let $K$ be the Markov chain on irreducible representations of $\Spnq$
given by tensoring with the Weil representation $\tau$. Then for $r=n+c$ with $c>0$, 
\[ ||K^r_{\triv} - \pi||_{\mathsf{TV}} \leq \frac{1}{2 \sqrt{q^{2c}-1}}.\] The same is true for the representation $\zeta$.
\end{theorem}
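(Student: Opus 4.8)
The plan is to run the proof of Theorem~\ref{oddcharmain} with essentially no change, the only modification being that the exponent $\tfrac12\dim\ker(g-1)$ appearing in $|\chi^\eta(g)|$ in odd characteristic is replaced by $\dim\ker(g-1)$ in even characteristic. First I would record the character data: by \cite{GT} one has $\tau(g)=q^{\dim\ker(g-1)}$, whence $d_\tau=\tau(\Id)=q^{2n}$ and $|\tau(g)/d_\tau|=q^{\dim\ker(g-1)-2n}$; likewise $d_\zeta=\zeta(\Id)=(-q)^{2n}=q^{2n}$ and $|\zeta(g)/d_\zeta|=q^{\dim\ker(g-1)-2n}=|\tau(g)/d_\tau|$. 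Since Theorem~\ref{charbound} only sees the quantities $|\chi^\eta(C)/d_\eta|$, it suffices to treat $\tau$, and the last sentence of the theorem follows immediately.

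Next, feeding this into Theorem~\ref{charbound} and sorting the non-identity elements of $\Spnq$ by the dimension $k$ of their fixed space gives
\[
4\,||K^r_{\triv}-\pi||_{\mathsf{TV}}^2 \;\leq\; \frac{|\Spnq|}{q^{4nr}}\sum_{k=0}^{2n-1} q^{2rk}\,\PV_{\Sp}(2n,k).
\]
The key observation is that, writing $s=2r$, the right-hand side is exactly the quantity bounded in the proof of Theorem~\ref{oddcharmain} with $s$ in place of the parameter there; and since $r=n+c$ here we have $s=2n+2c$, so the hypothesis of that proof (``$r=2n+c$ with $c>0$'') is met with its $c$ equal to $2c$.

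It then remains to repeat the two geometric-series estimates from the proof of Theorem~\ref{oddcharmain} verbatim: split $\sum_k$ according to the parity of $k$, apply $\PV_{\Sp}(2n,2k)\leq|\Spkq|^{-1}$ and $\PV_{\Sp}(2n,2k+1)\leq(|\Spkq|\,q^{2k+1})^{-1}$ from Theorem~\ref{RudShi} (which holds in even characteristic), substitute $k\mapsto n-1-k$, insert the order formula $|\Spnq|=q^{2n^2+n}\prod_{i=1}^n(1-q^{-2i})$, and bound the resulting geometric series. With $s=2n+2c$ in the role of the odd-characteristic parameter, the even-$k$ part is bounded by $\sum_{k\geq0}q^{-4c(k+1)}\leq(q^{4c}-1)^{-1}$ and the odd-$k$ part by $\sum_{k\geq0}q^{-2c(2k+1)}\leq q^{2c}(q^{4c}-1)^{-1}$; their sum is $(q^{2c}-1)^{-1}$, so $4\,||K^r_{\triv}-\pi||_{\mathsf{TV}}^2\leq(q^{2c}-1)^{-1}$, and taking square roots gives the claim, with the identical computation for $\zeta$ yielding the final assertion.

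I do not expect a genuine obstacle here — the estimates are literally those of the odd-characteristic case — so the only thing requiring care is bookkeeping: keeping track of the extra factor of $2$ relative to that case (it traces back to $\chi^\eta(\Id)=q^{2n}$ rather than $q^n$, which is also why the cutoff location drops from $2n$ to $n$) so that the stated constant $\tfrac12(q^{2c}-1)^{-1/2}$ emerges exactly.
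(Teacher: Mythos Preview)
Your argument is correct and is essentially the paper's own proof: both observe that the upper bound from Theorem~\ref{charbound} in even characteristic coincides with the odd-characteristic bound with $2r$ in place of $r$, and that Theorem~\ref{RudShi} holds regardless of characteristic, so Theorem~\ref{oddcharmain} applies with its parameter $c$ replaced by $2c$. The paper states this reduction in two sentences while you spell out the bookkeeping explicitly, but the content is identical.
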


\begin{proof} Arguing as in the odd characteristic case, the upper bound on $4 ||K^r_{\triv} - \pi||_{\mathsf{TV}}^2$ from Theorem \ref{charbound}
in even characteristic is equal to
\[ \frac{|\Spnq|}{q^{4nr}} \sum_{k=0}^{2n-1} q^{2rk} \PV_{\Sp}(2n,k) \] where $\PV_{\Sp}(2n,k)$ is the probability that a random element of
$\Spnq$ has a k-dimensional fixed space. Since $\PV_{\Sp}(2n,k)$ is the same in odd and even characteristic \cite{RS}, this is exactly equal to the upper bound we used for \[ 4 ||K^{2r}_{\triv} - \pi||_{\mathsf{TV}}^2 \] in odd characteristic. So the result is immediate from Theorem \ref{oddcharmain}.
\end{proof}

To prove a lower bound, we need a combinatorial result about transvections (Theorem \ref{sp-transv}). Now we prove the following lower bound. 

\begin{theorem}\label{lower-Sp-even} Suppose that $q$ is even. Let $K_{\triv}^r$ be the distribution on $\Irr(\Spnq)$ given by $r$ steps of tensoring with
the Weil representation $\tau$. Then given $\epsilon>0$, there exists $c>0$ (depending only on $\epsilon$) such that \[ ||K_{\triv}^r - \pi||_{\mathsf{TV}} \geq 1 - \epsilon\] for
$r=n-c$ and sufficiently large $n$. The same holds for the Weil representation $\zeta$.
\end{theorem}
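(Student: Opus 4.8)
The plan is to follow the same template used in the proofs of Theorems \ref{goalGL1}, \ref{goalGU}, \ref{lower1mod4}, and \ref{lower3mod4}. Let $C$ be the conjugacy class of transvections in $\Spnq$ (in even characteristic there is a single class, of size $q^{2n}-1$), and define the real-valued function $f_C(\rho) = |C|^{1/2}\chi^\rho(C)/d_\rho$; it is real-valued since $C = C^{-1}$ in even characteristic. Under Plancherel measure $\pi$, the orthogonality relations give $\EV_\pi(f_C) = 0$ and $\Var_\pi(f_C) = 1$, so Chebyshev's inequality shows that the event $A$ that $f_C \le \tfrac14 q^{c-1}$ (or a similar threshold) has $\pi(A) \ge 1 - O(q^{-2c})$.

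Next I would use Proposition \ref{combine} to compute $\EV_{K^r}(f_C)$ and $\Var_{K^r}(f_C)$ under the walk. For the linear Weil character $\tau$ we have $\chi^\tau(C)/d_\tau = q^{(2n-1)}/q^{2n} = q^{-1}$ (since $\dim\ker(g-1) = 2n-1$ for a transvection and $d_\tau = q^n$... more precisely $\chi^\tau(g) = q^{\dim\ker(g-1)}$ so $\chi^\tau(C) = q^{2n-1}$ and $d_\tau = q^n$, giving ratio $q^{n-1}$). Thus $\EV_{K^r}(f_C) = |C|^{1/2} q^{(n-1)r}/q^{nr} \cdot$ — here I should be careful: $\chi^\tau(C)/d_\tau = q^{2n-1}/q^n = q^{n-1}$, which is not a contraction, so in fact one works with $\chi^\tau(C)/\chi^\tau(\Id)$ where $\chi^\tau(\Id) = q^n$... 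Rather, $d_\eta = \chi^\eta(\Id) = q^n$ and $|\chi^\eta(C)| = q^{\dim\ker(g-1)} = q^{2n-1}$? No: the fixed space of a transvection in $\Sp_{2n}$ is $(2n-1)$-dimensional, but $d_\tau = q^n$ and the character value on a transvection is $q^{2n-1}$ only if that is at most $q^n$, which fails. The correct reading: $\tau$ has dimension $q^n$ and $\tau(g) = q^{\dim\ker(g-1)}$ forces $\dim\ker(g-1) \le n$ on the support — actually $\tau$ is a sum of Weil characters of total dimension $q^n$; the formula $\tau(g) = q^{\dim(\ker(g-1))}$ is as stated in the paper, so for a transvection $\tau(C) = q^{2n-1}$ would exceed $q^n$, impossible, so the convention must be that $\dim$ here is computed differently, or $d_\tau = q^{2n}$. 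I would take $d_\eta = \tau(\Id) = q^{2n}$... no. I will simply trust the paper's normalization as in Theorem \ref{evenupbound}, where the bound uses $q^{4nr}$ in the denominator, indicating $d_\eta = q^{2n}$ is NOT it either — $(\chi^\eta(C)/d_\eta)^{2r}$ with the sum giving $q^{2rk}/q^{4nr}$ means $d_\eta^{2r} = q^{4nr}$... wait that gives $d_\eta = q^{2n}$, inconsistent with $\chi^\eta(\Id) = q^n$. Resolving this normalization issue correctly and plugging into $\EV_{K^r}(f_C) = |C|^{1/2}(\chi^\eta(C)/d_\eta)^r$ with $r = n-c$ to get a lower bound of order $q^{c}$ (up to constants) is the first computational step.

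Then, using Proposition \ref{combine} with $s = 2$ together with the promised enumerative input Theorem \ref{sp-transv} on the distribution of the product of two random transvections in even-characteristic $\Spnq$, I would evaluate $\EV_{K^r}(f_C^2) = |C| \sum_T p_2(T) (\chi^\eta(T)/d_\eta)^r$ and subtract $(\EV_{K^r}(f_C))^2$ to obtain $\Var_{K^r}(f_C) = 1 + T_1 + T_2 - T_3$ for explicit error terms $T_i$; the same pattern as before should give $T_2 < T_3$ and $T_1 = O(q^{c})$ for $n$ large, so $\Var_{K^r}(f_C) \le 1 + O(q^c)$. A Chebyshev estimate then bounds $K^r(A) = K^r(f_C \le \tfrac14 q^{c-1}) \le K^r(|f_C - \EV_{K^r}(f_C)| \ge \tfrac14 q^{c-1}) \le O(q^c)/q^{2c-2} = O(q^{-c+2})$, and combining with $\pi(A) \ge 1 - O(q^{-2c})$ via $\|K_\triv^r - \pi\|_{\mathsf{TV}} \ge |K^r(A) - \pi(A)|$ yields the bound $\ge 1 - \epsilon$ once $c$ is chosen large enough in terms of $\epsilon$. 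For the unitary Weil character $\zeta(g) = (-q)^{\dim\ker(g-1)}$, note $|\zeta(g)| = |\tau(g)|$, so $|\chi^\zeta(T)/d_\zeta| = |\chi^\tau(T)/d_\tau|$; since $q$ is even and $r = n-c$, one either invokes that $f_C$ sees only the class function combinations that are unchanged, or more carefully tracks the sign $(-1)^{\dim\ker}$ — but as the character ratios enter only through the enumerative sum over $T$, and the transvection class $C$ has fixed $\dim\ker(g-1) = 2n-1$, the argument for $\zeta$ runs verbatim, possibly with a sign bookkeeping handled as in the $r$ even/odd split of Theorem \ref{goalGU}; I would remark this explicitly rather than repeat the computation.

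The main obstacle is the second-moment calculation: it depends entirely on having the precise distribution of a product of two random transvections in $\Spnq$ for $q$ even (Theorem \ref{sp-transv}), and on the character values $\chi^\tau(T)$ (equivalently $\dim\ker(t_1 t_2 - 1)$) for the relevant classes $T$ that arise — a transvection times a transvection can be the identity, another transvection, or an element whose fixed space has codimension $2$, and pinning down the proportions $p_2(T)$ and the corresponding fixed-space dimensions (and the sign $(-1)^{\mathrm{codim}}$ for the $\zeta$ case) is where the work lies. Once Theorem \ref{sp-transv} is in hand, the remaining steps are the routine Chebyshev manipulations already exhibited four times above.
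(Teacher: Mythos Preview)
Your approach is exactly the paper's, and once the normalization is straightened out the computation goes through verbatim. The confusion is simply that in even characteristic the reducible Weil character $\tau(g)=q^{\dim\ker(g-1)}$ has degree $d_\tau=\tau(\Id)=q^{\dim V}=q^{2n}$ (not $q^n$; you are conflating this with the odd-characteristic Weil, where $|\chi^\eta(g)|=q^{\frac12\dim\ker(g-1)}$ and $d_\eta=q^n$). Hence for a transvection $\chi^\tau(C)/d_\tau=q^{2n-1}/q^{2n}=q^{-1}$, and with $r=n-c$,
\[
\EV_{K^r}(f_C)=\sqrt{q^{2n}-1}\cdot q^{-r}=\sqrt{q^{2n}-1}\cdot q^{c-n}\ge 0.8\,q^{c}.
\]
Theorem \ref{sp-transv} then gives $\EV_{K^r}(f_C^2)=1+(q-2)q^{-r}+(q^{2n}-q)q^{-2r}$, so $\Var_{K^r}(f_C)=1+T_1+T_2-T_3$ with $T_1=(q-2)q^{c-n}$, $T_2=(q^{2n}-q)q^{2c-2n}$, $T_3=(q^{2n}-1)q^{2c-2n}$; since $T_2<T_3$ and $T_1\le q^c/2$ for $n\ge 2$, you get $\Var_{K^r}(f_C)\le 1+q^c/2$, and Chebyshev finishes as you describe (the paper uses threshold $q^c/2$ rather than $q^{c-1}/4$, but either works). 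For $\zeta$ the paper does exactly the even/odd-$r$ split you anticipate: for $r$ even nothing changes, for $r$ odd $\EV_{K^r}(f_C)\le -0.8q^c$ and $\Var_{K^r}(f_C)=1-T_1+T_2-T_3$ is even smaller.
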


\begin{proof} Let us first work with $\tau$. Let $C$ be the conjugacy class of transvections in $\Spnq$ (since $q$ is even there is only
one class of transvections). Define the function $f_C$ on $\Irr(\Spnq)$ by \[ f_C(\rho) = \frac{|C|^{1/2} \chi^{\rho}(C)}{d_{\rho}}.\] The
orthogonality relations for irreducible characters imply that under the Plancherel measure $\pi$, the random variable $f_C$ has mean 0 and variance 1.
So letting $A$ be the event $f_C \leq \alpha$ for $\alpha>0$, Chebyschev's inequality gives that $\pi(A) \leq 1 - \frac{1}{\alpha^2}$. Letting
$\alpha = \frac{q^c}{2}$, it follows that \[ \pi(A) \geq 1 - \frac{4}{q^{2c}}.\] 

Next we will lower bound the mean $\EV_{K^r}(f_C)$ and upper bound the variance $\Var_{K^r}(f_C)$. By Proposition \ref{combine},
\[ \EV_{K^r}(f_C) = |C|^{1/2} \left( \frac{\chi^{\tau}(C)}{d_{\tau}} \right)^r = \sqrt{q^{2n}-1} \cdot q^{c-n} \geq \sqrt{\frac{3 q^{2n}}{4}} q^{c-n} 
\geq .8 q^c.\]

To study the variance $\Var_{K^r}(f_C)$, we use Proposition \ref{combine} with $s=2$ and Theorem \ref{sp-transv} to conclude that
$\EV_{K^r}[(f_C)^2]$ is equal to 
 \[ (q^{2n}-1) \left[ \frac{1}{q^{2n}-1} + \frac{q-2}{q^{2n}-1} \frac{1}{q^r} + \frac{q^{2n}-q}{q^{2n}-1} \frac{1}{q^{2r}} \right].\]
Since \[ (\EV_{K^r}(f_C))^2 = \frac{q^{2n}-1}{q^{2n-2c}},\] it follows that $\Var_{K^r}(f_C)$ is equal to $1 +T_1 + T_2 - T_3$, where
\begin{equation} \label{3ts}  T_1 = \frac{q-2}{q^{n-c}} \ , \ T_2 = \frac{q^{2n}-q}{q^{2n-2c}} \ , \ T_3 = \frac{q^{2n}-1}{q^{2n-2c}}.
\end{equation} Now if $n \geq 2$,
then $T_1 \leq \frac{q^c}{2}$ and $T_2<T_3$. So for $n \geq 2$, $\Var_{K^r}(f_C) \leq 1 + \frac{q^c}{2}$.

Now \begin{eqnarray*}
K^r\left(f_C \leq \frac{1}{2} q^c \right) & = & K^r\left(f_C - \EV_{K^r}(f_C) \leq \frac{1}{2} q^c - \EV_{K^r}(f_C) \right) \\
& \leq & K^r \left(f_C -\EV_{K^r}(f_C) \leq \frac{1}{2} q^c - .8 q^c \right) \\
& = & K^r \left(f_C -\EV_{K^r}(f_C) \leq -.3 q^c \right) \\
& \leq & K^r \left(|f_C -\EV_{K^r}(f_C)| \geq .3 q^c \right).
\end{eqnarray*} By Chebyshev's inequality, this is at most
\[ \frac{\Var_{K^r}(f_C)}{(.3 q^c)^2} \leq \frac{12(1+ \frac{q^c}{2})}{q^{2c}}.\]

Since \[ ||K^r - \pi||_{\mathsf{TV}} \geq |K^r(A) - \pi(A)|,\] the theorem is proved for the Weil representation $\tau$.

For the Weil representation $\zeta$, we break into cases based on the parity of $r$. If $r$ is even then arguing as for $\tau$, one has
that \[ \EV_{K^r}[f_C] = |C|^{1/2} \left[ \frac{\chi^{\zeta}(C)}{d_{\zeta}} \right]^r \geq .8q^c.\] Moreover for $r$ even,  $\EV_{K^r}[(f_C)^2]$ is
exactly the same as for $\tau$, so the theorem is proved for $r$ even. If $r$ is odd, then $\EV_{K^r}(f_C) \leq -.8 q^c$. Moreover
\[ \Var_{K^r}(f_C) = 1 - T_1 + T_2 - T_3, \] where $T_1,T_2,T_3$ are defined in \eqref{3ts}.  So the variance of $f_C$ under the
measure $K^r$ is smaller for $\zeta$ than for $\tau$, so the lower bound for $\zeta$ is all the more true than the lower bound for $\tau$.
\end{proof}

\section{Products of two transvections in $\GLnq$}\label{sec:gl1}

Let  $F$ be a field and $V$ a finite dimensional $F$-vector space.  A linear transformation $T\in \GL(V)$ is a transvection if and only if $T-I$ is nilpotent and of rank $1$,
i.e., if and only if $T$ is of the form 
$$I+ vw^*: y \mapsto y +w^*(y)v,$$ 
where $v\in V$ and $w^*\in V^*$ are non-zero vectors such that $w^*(v)=0$.
If $|F|=q$ is finite, then the map $\tau(v,w^*)\mapsto I+vw^*$ from 
$$X_V := \{(v,w^*)\in V\times V^*\mid v\neq 0,w^*\neq 0,w^*(v)=0\}$$ 
to the set of transvections in $\GL(V)$ is $q-1$ to $1$.

From now on, we assume $F=\F_q$ and $\dim_{\F_q}V=n \geq 2$. Then 
$$|X_V| = (q^n-1)(q^{n-1}-1)$$ 
since for any non-zero $w^*\in V^*$, the $v$ such that $(v,w^*)$ belongs to $X_V$ range over all non-zero vectors of $\ker w^*$.

Given $x_i = (v_i,w_i^*)\in X_V$ for $i=1,2$, we define $T_i = \tau(x_i)$ and consider the kernel of
$$N_{12} := T_1 T_2 - I = v_1 w_1^* + (v_2 + w_1^*(v_2)v_1) w_2^*.$$
For each $x_1\in X_V$ there are $q-1$ elements $x_2\in X_V$ such that $\tau(x_2) = \tau(x_1)^{-1}$, so the number of pairs $(x_1,x_2)\in X_V^2$ such that $\ker N_{12} = V$ is $(q-1)|X_V|$.

Since $\ker N_{12}$ contains $\ker w_1^* \cap \ker w_2^*$, which is of codimension $\le 2$ in $V$, our goal is to count the number of pairs $(x_1,x_2)$ such that $\dim\ker N_{12} = n-2$.
A necessary condition for this to happen is that $\ker w_1^*\neq \ker w_2^*$, or (equivalently) that $w_1^*$ and $w_2^*$ are linearly independent.  Assuming this is so, 
$\dim\ker N_{12} = n-2$ if and only if $v_1$ and $v_2 + w_1^*(v_2)v_1$ are linearly independent, which is equivalent to $v_1$ and $v_2$ being linearly independent.

\begin{lemma}\label{count1}
The number of pairs $(x_1,x_2)\in X_V^2$ such that $v_1$ and $v_2$ are linearly independent as are $w_1^*$ and $w_2^*$ is 
$$(q^{2n-1}-3q^n+q^{n-1}+q^2)|X_V|.$$
\end{lemma}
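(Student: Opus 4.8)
The plan is to count the pairs $(x_1,x_2) \in X_V^2$ by first choosing the covector data and then the vector data, since the two linear-independence conditions decouple once we fix the relevant kernels. Concretely, I would first count ordered pairs $(w_1^*, w_2^*)$ of linearly independent nonzero covectors, then for each such pair count the number of admissible $(v_1, v_2)$ with $v_i \in \ker w_i^* \setminus \{0\}$ and $v_1, v_2$ linearly independent. The number of linearly independent ordered pairs $(w_1^*, w_2^*)$ in $V^*$ is $(q^n - 1)(q^n - q)$. The subtlety is that the count of valid $(v_1, v_2)$ depends on $W := \ker w_1^* \cap \ker w_2^*$, which is a fixed hyperplane-intersection of dimension $n-2$, but actually — since $w_1^*, w_2^*$ are independent — $\ker w_1^*$ and $\ker w_2^*$ are distinct hyperplanes and the configuration is the same up to $\GL(V)$, so the inner count is independent of which independent pair we chose.

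So the core computation is: fix distinct hyperplanes $H_1 = \ker w_1^*$ and $H_2 = \ker w_2^*$ with $H_1 \cap H_2 =: W$ of dimension $n-2$. Count pairs $(v_1, v_2)$ with $v_1 \in H_1 \setminus \{0\}$, $v_2 \in H_2 \setminus \{0\}$, and $v_1, v_2$ linearly independent. The total number of pairs with $v_i \in H_i \setminus\{0\}$ is $(q^{n-1}-1)^2$. From this I subtract the pairs where $v_1, v_2$ are linearly dependent, i.e. $v_2 = \mu v_1$ for some $\mu \in \F_q^*$; this forces $v_1 \in H_1 \cap H_2 = W$, and then for each of the $q^{n-2}-1$ nonzero $v_1 \in W$ there are $q-1$ choices of $\mu$, giving $(q-1)(q^{n-2}-1)$ dependent pairs. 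Hence the inner count is $(q^{n-1}-1)^2 - (q-1)(q^{n-2}-1)$.

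Multiplying, the total number of pairs $(x_1,x_2)$ in question is
\[
(q^n-1)(q^n-q)\bigl[(q^{n-1}-1)^2 - (q-1)(q^{n-2}-1)\bigr].
\]
It then remains to verify this equals $(q^{2n-1}-3q^n+q^{n-1}+q^2)|X_V| = (q^{2n-1}-3q^n+q^{n-1}+q^2)(q^n-1)(q^{n-1}-1)$. Dividing both sides by $(q^n-1)$, this reduces to checking the polynomial identity
\[
(q^n-q)\bigl[(q^{n-1}-1)^2 - (q-1)(q^{n-2}-1)\bigr] = (q^{2n-1}-3q^n+q^{n-1}+q^2)(q^{n-1}-1),
\]
which is a routine expansion in $q$ (treating $q^n$ as a single variable where convenient). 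I expect this final algebraic verification to be the only real chore, and it is entirely mechanical; the genuinely substantive point is the observation that fixing the independent covector pair reduces the inner count to a fixed configuration of two hyperplanes, so that no averaging over configurations is needed.
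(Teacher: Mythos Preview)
Your proof is correct but proceeds differently from the paper. You count directly: fix an independent covector pair $(w_1^*,w_2^*)$ (there are $(q^n-1)(q^n-q)$ of them), observe that the configuration of the two kernel hyperplanes is unique up to $\GL(V)$, and then count admissible $(v_1,v_2)$ inside that fixed configuration. The paper instead uses inclusion--exclusion on the complement: it counts pairs in $X_V^2$ with $v_1,v_2$ dependent, pairs with $w_1^*,w_2^*$ dependent, and pairs with both dependent, obtaining
\[
|X_V|^2 - 2(q-1)(q^{n-1}-1)|X_V| + (q-1)^2|X_V|.
\]
Your route has the advantage of being entirely constructive and making the geometry (two hyperplanes meeting in codimension two) explicit; the paper's route avoids your final polynomial verification, since the answer falls out already factored as a multiple of $|X_V|$. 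Both are short and of comparable difficulty. Incidentally, your verification is immediate once you note $q^n-q=q(q^{n-1}-1)$ and $q^{2n-1}-3q^n+q^{n-1}+q^2=q\bigl(q^{2n-2}-3q^{n-1}+q^{n-2}+q\bigr)$, the second factor being exactly your inner count $(q^{n-1}-1)^2-(q-1)(q^{n-2}-1)$.
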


\begin{proof}
First we count pairs $(x_1,x_2)$ where $v_1$ and $v_2$ are dependent as are $w_1^*$ and $w_2^*$.
Given $x_1$, there are $q-1$ possibilities for $v_2$ and $q-1$ possibilities for $w_2^*$, for a total of $(q-1)^2|X_V|$.
Next we count pairs $(x_1,x_2)$ where $v_1$ and $v_2$ are dependent, but we make no assumption about $w_1^*$ and $w_2^*$.
Given $v_1$, there are $q-1$ possibilities for $v_2$ and $q^{n-1}-1$ possibilities each for $w_1^*$ and $w_2^*$, for a total of 
$$(q-1)(q^{n-1}-1)^2(q^n-1) = (q-1)(q^{n-1}-1)|X_V|.$$
The same thing applies for the condition that $w_1^*$ and $w_2^*$ are dependent but we make no assumption about $v_1$ and $v_2$.
Therefore, the total number of pairs $(x_1,x_2)$ where both the $v$-coordinates and the $w^*$-coordinates are independent is
$$(|X_V|-2(q-1)(q^{n-1}-1) + (q-1)^2)|X_V|,$$
which gives the lemma.
\end{proof}

Lemma \ref{count1} immediately implies the following result 

\begin{theorem}\label{gl-transv}
Let $V = \F_q^n$ with $n \geq 2$. Then the probability that the product of two uniformly distributed transvections in $\GL(V)$ has fixed point subspace of codimension $e$ is 
$$\frac{q^{2n-1}-3q^n+q^{n-1}+q^2}{(q^n-1)(q^{n-1}-1)}$$
if $e=2$, 
$$\frac{2q^{n}-2q^{n-1}-q^2-q+2}{(q^n-1)(q^{n-1}-1)}$$
if $e=1$, and
%
%The probability of codimension $0$ is 
%
$$\frac{q-1}{(q^n-1)(q^{n-1}-1)}$$
if $e=0$.
\end{theorem}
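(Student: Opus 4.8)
The plan is to deduce Theorem~\ref{gl-transv} directly from Lemma~\ref{count1} by a short bookkeeping argument that tracks the three possible values of $e = \dim V - \dim\ker N_{12}$. Since the transvections in $\GL(V)$ form a single conjugacy class, choosing two uniform transvections independently is the same (up to the $q-1$ to $1$ covering $\tau\colon X_V\to\{\text{transvections}\}$) as choosing a uniform pair $(x_1,x_2)\in X_V^2$; the fibers of $\tau$ all have the same size $q-1$, so the induced probability measure on pairs of transvections is the pushforward of the uniform measure on $X_V^2$, and fiber sizes cancel in every ratio. Hence it suffices to count pairs $(x_1,x_2)\in X_V^2$ realizing each value of $e$ and divide by $|X_V|^2 = (q^n-1)^2(q^{n-1}-1)^2$.

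First I would record the case $e=0$: as already observed in the text, $\ker N_{12}=V$ exactly when $T_1T_2 = I$, i.e.\ $T_2 = T_1^{-1}$, and for each $x_1$ there are exactly $q-1$ choices of $x_2$ with $\tau(x_2)=\tau(x_1)^{-1}$, giving $(q-1)|X_V|$ pairs and probability $(q-1)|X_V|/|X_V|^2 = (q-1)/\bigl((q^n-1)(q^{n-1}-1)\bigr)$. Next, for $e=2$: the discussion preceding Lemma~\ref{count1} shows $\dim\ker N_{12} = n-2$ if and only if $w_1^*,w_2^*$ are linearly independent \emph{and} $v_1,v_2$ are linearly independent, which is precisely the count supplied by Lemma~\ref{count1}, namely $(q^{2n-1}-3q^n+q^{n-1}+q^2)|X_V|$; dividing by $|X_V|^2$ gives the stated $e=2$ probability. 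Finally, $e=1$ is whatever is left over: the number of pairs with $e=1$ is $|X_V|^2$ minus the $e=0$ count minus the $e=2$ count, i.e.\ $|X_V|\bigl(|X_V| - (q-1) - (q^{2n-1}-3q^n+q^{n-1}+q^2)\bigr)$, and one checks $|X_V| - (q-1) - (q^{2n-1}-3q^n+q^{n-1}+q^2) = (q^n-1)(q^{n-1}-1) - (q-1) - q^{2n-1}+3q^n-q^{n-1}-q^2 = 2q^n-2q^{n-1}-q^2-q+2$, yielding the stated $e=1$ probability after dividing by $|X_V|$.

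The only genuine content is already in Lemma~\ref{count1} and the rank computation for $N_{12} = v_1w_1^* + (v_2+w_1^*(v_2)v_1)w_2^*$ that precedes it; the theorem itself is then a one-line subtraction. The step most worth double-checking is the ``complementarity'' claim that the three cases $e=0,1,2$ genuinely exhaust all pairs — i.e.\ that $\dim\ker N_{12}$ can only be $n$, $n-1$, or $n-2$ — which holds because $\ker N_{12}\supseteq \ker w_1^*\cap\ker w_2^*$ and the latter has codimension at most $2$. Given that, the remaining arithmetic simplification of $2q^n-2q^{n-1}-q^2-q+2$ from the leftover count is routine and I would present it compactly rather than expanding every term.
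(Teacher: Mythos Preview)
Your proposal is correct and is exactly the approach the paper takes: the paper simply states that Lemma~\ref{count1} ``immediately implies'' Theorem~\ref{gl-transv}, and you have supplied precisely the bookkeeping (the $(q-1)$-to-$1$ fiber cancellation, the $e=0$ count from the preceding text, and the subtraction for $e=1$) that this implication entails. Your arithmetic for the $e=1$ leftover is correct, and your remark that $e\le 2$ because $\ker N_{12}\supseteq \ker w_1^*\cap \ker w_2^*$ is the right justification for exhaustiveness.
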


\section{Products of two transvections in $\GUnq$}\label{sec:gu1}

Let $V$ be a vector space of dimension $n \geq 2$ over $\F_{q^2}$ endowed with a non-degenerate Hermitian inner product $(\cdot|\cdot)$, which we assume to be
$\F_{q^2}$-linear on the second argument. This inner product can be thought of as giving a semi-linear isomorphism $V\to V^*$, and for $w^*\in V^*$, we denote by $w$ the corresponding element of $V$: $w^*(y) = (w|y)$. As mentioned in \S\ref{sec:gl1}, the transformation $I + v w^*$ on $V$ is a transvection if 
and only if $v,w$ are nonzero, and 
\begin{equation}\label{eq:gu20}
  0=w^*(v) = (w|v). 
\end{equation}
We now show that it preserves the form $(\cdot|\cdot)$ if and only if 
\begin{equation}\label{eq:gu22}
  v \mbox{ is isotropic and }w=c v\mbox{ for some }c\in \F_{q^2}^\times \mbox{ with }c+c^q=0.
\end{equation} 
Indeed, this happens precisely when for any $x,y \in V$ we have
$$\begin{aligned}0 & =(x+(w,x)v|y+(w,y)v)-(x|y)\\
  & = (x|v)(w|y)+(w|x)^q(v|y)+(w|x)^q(w|y)(v|v)\\
  & =  \bigl( (v|x)w+(w|x)v+(w|x)(v|v)w|y \bigr).\end{aligned}.$$
Since $(\cdot|\cdot)$ is non-degenerate, this is equivalent to 
\begin{equation}\label{eq:gu21}
  (v|x)w+(w|x)v+(w|x)(v|v)w=0 
\end{equation}
for all $x \in V$. If $v$ and $w$ are linearly independent, then we
get $(w|x)=0$ for all $x \in V$ and hence $w=0$, a contradiction. So we can write $w=cv$ with $c \in \F_{q^2}^\times$, and \eqref{eq:gu20} and \eqref{eq:gu21} mean exactly that $v$ is isotropic and $c(v|x)v+c^q(v|x)v=0$. The latter can hold for all $x \in V$ precisely when $c+c^q=0$, as we claimed.

The number of isotropic vectors in $V$ is 
$$N := (q^n-(-1)^n)(q^{n-1}-(-1)^{n-1}).$$  
The number of pairs $(w^*,v)$ satisfying \eqref{eq:gu22} is therefore $(q-1)N$.  The map $\tau$ from pairs $(w^*,v)$ to unitary transvections is $q^2-1$ to $1$.  This is because pairs $(w_1^*,v_1)$ and $(w_2^*,v_2)$ correspond to the same unitary transvection if and only if $v_2 = \alpha v_1$ and $w_2 = \alpha^{-q} w_1$ for some $\alpha\in \F_{q^2}^\times$.  We conclude that the number of unitary transvections is $N/(q+1)$, and the probability that the product of two random symplectic transvections has fixed point space of codimension $0$ is accordingly $(q+1)/N$.

The product of $\tau(w_1^*,v_1)$ and $\tau(w_2^*,v_2)$ is of rank $2$ if and only if $v_1$ and $v_2$ are linearly independent, since, as in the symplectic case, this implies also that $w_1^*$ and $w_2^*$ are linearly independent as well.  The number of pairs $(v_1,v_2)$ of isotropic vectors which are linearly independent is $N(N-(q^2-1))$.  For each such pair there are $(q-1)^2$ possibilities for $(w_1^*,w_2^*)$, so the probability that a random pair $(T_1,T_2)$ of unitary transvections has $I-T_1 T_2$ of rank $2$ is
$$\frac{N-q^2+1}{N} = \frac{q^{2n-1} - (-1)^{n-1}q^n - (-1)^n q^{n-1} -q^2}{(q^n-(-1)^n)(q^{n-1}-(-1)^{n-1})}.$$
We have proved
\begin{theorem}\label{glutransv}
Let $V = \F_{q^2}^n$ with $n \geq 2$. Then the probability that the product of two uniformly distributed transvections in $\GU(V) \cong \GUnq$ has fixed point subspace of codimension $e$ is 
$$\frac{q^{2n-1} - (-1)^{n-1}q^n - (-1)^n q^{n-1} -q^2}{(q^n-(-1)^n)(q^{n-1}-(-1)^{n-1})}$$
if $e=2$, 
$$\frac{q^2-q-2}{(q^n-(-1)^n)(q^{n-1}-(-1)^{n-1})}$$
if $e=1$, and
%
%The probability of codimension $0$ is 
%
$$\frac{q+1}{(q^n-(-1)^n)(q^{n-1}-(-1)^{n-1})}$$
if $e=0$.
\end{theorem}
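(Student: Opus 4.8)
The plan is to observe that essentially all the counting has already been carried out in the discussion preceding the statement, so that only two things remain to be done: disposing of the codimension-one case, and checking the displayed algebraic simplifications.

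First I would record what is already in hand. From the paragraphs above, the number of unitary transvections is $N/(q+1)$, where $N = (q^n-(-1)^n)(q^{n-1}-(-1)^{n-1})$ is the number of isotropic vectors, each transvection being $\tau(w^*,v)$ for exactly $q^2-1$ of the $(q-1)N$ admissible pairs $(w^*,v)$. Counting ordered pairs $(T_1,T_2)$ with $T_1T_2=I$ (equivalently $T_2=T_1^{-1}$) gives the codimension-zero probability $(q+1)/N$; and counting ordered pairs $(v_1,v_2)$ of linearly independent isotropic vectors, of which there are $N(N-(q^2-1))$, together with the $(q-1)^2$ choices of $(w_1^*,w_2^*)$ for each such pair (here $w_i$ is forced to be a scalar multiple of $v_i$), gives the codimension-two probability $(N-q^2+1)/N$.

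Next I would dispose of the codimension-one case by complementation. Since $T_1T_2-I = v_1w_1^* + (v_2+w_1^*(v_2)v_1)w_2^*$ has image contained in the span of $v_1$ and $v_2$, the fixed point subspace of $T_1T_2$ always has codimension at most $2$; hence the three probabilities sum to $1$, and the codimension-one probability equals $1 - (q+1)/N - (N-q^2+1)/N = (q^2-q-2)/N$.

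Finally I would carry out the routine algebraic simplification: expanding $N$ and using $(-1)^n(-1)^{n-1}=-1$, one checks that $N-q^2+1 = q^{2n-1}-(-1)^{n-1}q^n-(-1)^nq^{n-1}-q^2$, which matches the stated $e=2$ numerator, while the $e=1$ and $e=0$ numerators are already in the forms $q^2-q-2$ and $q+1$. There is no substantial obstacle here; the only points requiring a little care are the justification that $T_1T_2-I$ has rank at most $2$ and that rank exactly $2$ is equivalent to $v_1,v_2$ being linearly independent, both of which follow from the formula for $T_1T_2-I$ above together with the fact that $w_i$ is a nonzero scalar multiple of $v_i$ (so $w_1^*,w_2^*$ are independent precisely when $v_1,v_2$ are).
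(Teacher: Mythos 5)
Your proposal is correct and follows the same route as the paper: the codimension-$0$ and codimension-$2$ counts are exactly the computations carried out in the discussion preceding the theorem, and the codimension-$1$ probability is obtained by complementation after observing that $T_1T_2-I$ always has rank at most $2$, a step the paper leaves implicit. Your algebraic verification that $N-q^2+1$ matches the displayed $e=2$ numerator and your justification that rank-$2$ is equivalent to linear independence of $v_1,v_2$ (via $w_i=c_iv_i$) are both accurate.
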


\section{Products of two transvections in $\Spnq$}\label{sec:sp1}

Now assume that $V=\F_q^{2n}$ is endowed with a non-degenerate symplectic form $(\cdot|\cdot)$.  In particular, this form allows us to identify $V$ and $V^*$, and for $w^*\in V^*$, we denote by $w$ the corresponding element of $V$: $w^*(y) = (w|y)$.
The transvection $I + vw^* $ respects $(\cdot,\cdot)$ if and only if $w$ and $v$ are scalar multiples of one another.  Let $Y_V\subset X_V$ denote the set of ordered pairs $(v,w^*)\in V\times V^*$ such that
$v$ and $w$ are non-zero vectors which are multiples of one another.  Thus, 
$$|Y_V| = (q^{2n}-1)(q-1).$$  The restriction of $\tau$ to $Y_V$ is a $q-1$ to $1$ map from $Y_V$ to the set of symplectic transvections of $\Sp(V)$.
It follows that there are $q^{2n}-1$ symplectic transvections, and the probability that the product of two random symplectic transvections has fixed point space of codimension $0$ is accordingly
$$\frac {1}{q^{2n}-1}.$$

We now calculate the probability that the codimension is $2$.  As in the $\GL(V)$ case, this happens if and only if $w_1^*$ and $w_2^*$ are linearly independent and $v_1$ and $v_2$ are linearly independent.
However, the two conditions are equivalent, so we need only count the number of pairs $(x_1,x_2)$ such that $v_1$ and $v_2$ are linearly independent.
Given any ordered pair $(v_1,v_2)$ of linearly independent vectors, there are $(q-1)^2$ choices for $(w_1^*,w_2^*)$, so the total number of possibilities is 
$$(q-1)^2 (q^{2n}-q)(q^{2n}-1) = (q-1)(q^{2n}-q)|Y_V|.$$
Therefore, the probability that the invariant subspace of the product of two random symplectic transvections will have codimension $2$ is
$$\frac{q^{2n}-q}{q^{2n}-1}.$$
We have proved 

\begin{theorem}\label{sp-transv}
Let $V = \F_q^{2n}$ with $n \geq 1$ be a symplectic space. Then the probability that the product of two uniformly distributed transvections in $\Sp(V)$ has fixed point subspace of codimension $e$ is 
$$\frac{q^{2n}-q}{q^{2n}-1}$$
if $e=2$, 
$$\frac{q-2}{q^{2n}-1}$$
if $e=1$, and  
$$\frac{1}{q^{2n}-1}$$
if $e=0$.
\end{theorem}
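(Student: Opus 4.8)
The plan is to leverage the structural facts already assembled in the discussion above and reduce everything to three short counts, the last of which is purely arithmetic. The starting point is that the fixed-point subspace $\ker(T_1T_2 - I)$ of a product of two transvections has codimension at most $2$: since each $T_i - I$ has rank $1$ and $T_1T_2 - I = (T_1 - I)T_2 + (T_2 - I)$, the operator $N_{12} := T_1T_2 - I$ has rank at most $2$. Hence the codimension $e$ takes values in $\{0,1,2\}$, and it suffices to pin down the cases $e = 0$ and $e = 2$ and then obtain $e = 1$ by subtraction.

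For $e = 0$ I would observe that $N_{12}$ has rank $0$ exactly when $T_2 = T_1^{-1}$; since the map $\tau$ from $Y_V$ to the set of symplectic transvections is $(q-1)$-to-$1$ and $|Y_V| = (q^{2n}-1)(q-1)$, the number of ordered pairs $(x_1,x_2) \in Y_V^2$ with $\tau(x_1)\tau(x_2) = I$ is $(q-1)|Y_V|$, giving probability $(q-1)|Y_V|/|Y_V|^2 = 1/(q^{2n}-1)$, exactly the computation already recorded above. For $e = 2$ I would use the explicit formula $N_{12} = v_1 w_1^* + (v_2 + w_1^*(v_2)v_1) w_2^*$ from Section~\ref{sec:gl1}: writing $T_i = \tau(v_i,w_i^*)$ with $w_i$ a nonzero scalar multiple of $v_i$ and using $w_1^*(v_1) = 0$, one checks (as in the $\GL(V)$ case) that $\mathrm{rank}\,N_{12} = 2$ precisely when $v_1$ and $v_2$ are linearly independent, which in the symplectic setting is equivalent to $w_1^*$ and $w_2^*$ being linearly independent since $w_i$ is proportional to $v_i$. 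The number of ordered pairs $(v_1,v_2)$ of linearly independent vectors in $V$ is $(q^{2n}-1)(q^{2n}-q)$, and each extends to $(q-1)^2$ pairs $(x_1,x_2) \in Y_V^2$ by choosing the two scalars; dividing by $|Y_V|^2$ yields probability $(q^{2n}-q)/(q^{2n}-1)$.

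For $e = 1$ I would simply subtract: $1 - \frac{1}{q^{2n}-1} - \frac{q^{2n}-q}{q^{2n}-1} = \frac{q-2}{q^{2n}-1}$. As an internal check one can also count the $e = 1$ pairs directly: the ordered pairs $(x_1,x_2) \in Y_V^2$ with $v_1,v_2$ linearly dependent number $(q-1)^2|Y_V|$, of which $(q-1)|Y_V|$ satisfy $\tau(x_1)\tau(x_2) = I$, so the $e = 1$ count is $(q-1)(q-2)|Y_V|$, again giving $(q-2)/(q^{2n}-1)$.

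I do not expect a serious obstacle: the bulk of the argument is already in place before the theorem statement. The one point needing care is the rank analysis underlying the $e = 2$ count — confirming that independence of the $v_i$ and of the $w_i^*$ genuinely coincide in the symplectic setting and together force $\mathrm{rank}\,N_{12} = 2$ rather than $\le 1$ — but this mirrors the $\GL(V)$ computation of Lemma~\ref{count1} almost verbatim, so it should be routine.
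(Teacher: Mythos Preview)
Your proposal is correct and follows essentially the same approach as the paper: the paper likewise parametrizes symplectic transvections via $Y_V$, computes the $e=0$ probability as $1/(q^{2n}-1)$ from the count of inverses, obtains the $e=2$ probability by observing that independence of the $v_i$ and of the $w_i^*$ coincide in the symplectic case and counting linearly independent pairs $(v_1,v_2)$, and then reads off the $e=1$ probability by subtraction. Your added direct count for $e=1$ is a nice extra check but not in the paper.
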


From now on, we assume $q$ is an odd prime power, and let $\F_q^{\times 2}$ denote the set of squares in $\F_q^\times$. We will frequently identify the integer $1$ with 
the identity in $\F_q$, and set
$$\kappa := (-1)^{(q-1)/2}.$$

\begin{lemma}\label{lem:sq1}
The equation $x+1=y$ has $(q-5)/4$ solutions $(x,y) \in \F_q^{\times 2} \times \F_q^{\times 2}$ if $q \equiv 1 \pmod{4}$, and $(q-3)/4$ solutions 
$(x,y) \in \F_q^{\times 2} \times \F_q^{\times 2}$ if $q \equiv 3 \pmod{4}$.
\end{lemma}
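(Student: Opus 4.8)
The plan is to convert the count into an evaluation of a quadratic character sum. Let $\chi$ denote the quadratic (Legendre) character of $\F_q$, extended by $\chi(0)=0$, so that for $t\in\F_q$ the quantity $\tfrac12(1+\chi(t))$ equals $1$ if $t\in\F_q^{\times 2}$, equals $0$ if $t$ is a nonsquare, and equals $\tfrac12$ if $t=0$. Writing $N$ for the number of solutions sought, note that for $x\notin\{0,-1\}$ the product $\tfrac12(1+\chi(x))\cdot\tfrac12(1+\chi(x+1))$ is precisely the indicator that both $x$ and $x+1$ lie in $\F_q^{\times 2}$; hence $N$ equals $\sum_{x\in\F_q\setminus\{0,-1\}}\tfrac14(1+\chi(x))(1+\chi(x+1))$. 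Adding back the two omitted terms (the $x=0$ term of the sum over all of $\F_q$ equals $\tfrac12$, and the $x=-1$ term equals $\tfrac14(1+\chi(-1))=\tfrac14(1+\kappa)$), I would record the identity
$$\sum_{x\in\F_q}\frac{(1+\chi(x))(1+\chi(x+1))}{4}\;=\;N+\frac12+\frac{1+\kappa}{4}.$$

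Next I would expand the left-hand side as $\tfrac14\bigl(\sum_x 1+\sum_x\chi(x)+\sum_x\chi(x+1)+\sum_x\chi(x)\chi(x+1)\bigr)$. The first inner sum is $q$, and the two linear character sums vanish since $\chi$ sums to $0$ over $\F_q$. The remaining sum is $\sum_x\chi\bigl(x(x+1)\bigr)$, which I would evaluate elementarily: the $x=0$ term is $0$, and for $x\neq 0$ one has $\chi\bigl(x(x+1)\bigr)=\chi(x)^2\chi(1+x^{-1})=\chi(1+x^{-1})$; as $x$ runs over $\F_q^\times$ the argument $1+x^{-1}$ runs over $\F_q\setminus\{1\}$, so the sum is $\sum_{v\neq 1}\chi(v)=-\chi(1)=-1$. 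Therefore the left-hand side equals $\tfrac14(q-1)$, and solving gives $N=\tfrac14(q-1)-\tfrac12-\tfrac14(1+\kappa)=\tfrac14(q-4-\kappa)$. Substituting $\kappa=1$ when $q\equiv 1\pmod 4$ and $\kappa=-1$ when $q\equiv 3\pmod 4$ yields $(q-5)/4$ and $(q-3)/4$ respectively.

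The only real subtlety is the bookkeeping at the boundary values $x=0$ and $x=-1$, where the character-indicator identity breaks down and must be corrected by hand; everything else is mechanical. If one prefers, the step $\sum_x\chi(x^2+x)=-1$ can instead be quoted from the standard evaluation $\sum_{x\in\F_q}\chi(ax^2+bx+c)=-\chi(a)$, valid whenever $a\neq0$ and $b^2-4ac\neq0$, here with $(a,b,c)=(1,1,0)$.
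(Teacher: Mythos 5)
Your proof is correct, but it takes a genuinely different route from the paper's. You encode the square-indicator as $\tfrac12(1+\chi(\cdot))$ and reduce the count to the standard complete character sum $\sum_{x\in\F_q}\chi(x^2+x)=-1$, handling the boundary terms $x=0$ and $x=-1$ explicitly; the algebra then delivers $N=(q-4-\kappa)/4$ in both congruence classes at once. The paper instead parametrizes: it writes $x=u^2$, $y=v^2$, so that $u^2-v^2=-1$, sets $z=u+v$ to get $u=(z-1/z)/2$, $v=(z+1/z)/2$ with $z\in\F_q^\times$, and observes that the map $z\mapsto(x,y)$ is generically $4$-to-$1$; one then simply removes the $z$ with $u=0$ or $v=0$ (which is where the case split $q\equiv\pm1\pmod4$ enters, since $v=0$ requires $z^2=-1$) and divides by $4$. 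Your character-sum method is more mechanical and generalizes readily to variants of the question (other shifts, other character conditions), and it makes the appearance of $\kappa=\chi(-1)$ transparent; the paper's parametrization is more elementary, avoiding any appeal to the vanishing of incomplete character sums, and fits naturally with the surrounding use of the hyperbola $u^2-v^2=-1$ in Lemma~\ref{lem:sq2}. Either argument is a complete and correct proof of the lemma.
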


\begin{proof}
Write $x=u^2$ and $y=v^2$, so that $u^2-v^2=-1$, and we want $u,v \in \F_q^\times$. With $z:=u+v$ we have $u-v = -1/z$, and hence
$u=(z-1/z)/2$ and $v=(z+1/z)/2$, where $z \in \F_q^\times$. If $q \equiv 1 \pmod{4}$, then $u,v \neq 0$ precisely when $z^2 \neq \pm 1$, leaving $q-5$
choices for $z$, $(q-5)/2$ choices for $v$, and hence $(q-5)/4$ choices for $y$.  If $q \equiv 3 \pmod{4}$, then $u,v \neq 0$ precisely when $z^2 \neq 1$, leaving $q-3$
choices for $z$, $(q-3)/2$ choices for $v$, and hence $(q-3)/4$ choices for $y$.  
\end{proof}

Following the notation of \cite{Sr}, we fix an element $\theta \in \F_{q^2}^\times$ of order $q^2-1$, and let 
$$\gamma:=\theta^{q+1},~\eta:=\theta^{q-1}.$$

\begin{lemma}\label{lem:sq2}
\begin{enumerate}[\rm(i)]
\item Suppose $q \equiv 1 \pmod{4}$. When $\alpha$ runs over the set $\F_q^{\times 2} \setminus \{4\}$, $2-\alpha$ can be written as 
$\lambda+\lambda^{-1}$ with $\lambda = \gamma^i$ for exactly $(q-5)/4$ values of $\alpha$, for all of which we have $2 \mid i$, and  
$\lambda+\lambda^{-1}$ with $\lambda = \eta^j$ for exactly $(q-1)/4$ values of $\alpha$, for all of which we have $2 \nmid j$.
\item Suppose $q \equiv 3 \pmod{4}$. When $\alpha$ runs over the set $\F_q^{\times 2} \setminus \{4\}$, $2-\alpha$ can be written as 
$\lambda+\lambda^{-1}$ with $\lambda = \gamma^i \neq \pm 1$ for exactly $(q-3)/4$ values of $\alpha$, for all of which we have $2 \nmid i$, and  
$\lambda+\lambda^{-1}$ with $\lambda = \eta^j \neq \pm 1$ for exactly $(q-3)/4$ values of $\alpha$, for all of which we have $2 \mid j$.
\end{enumerate}
\end{lemma}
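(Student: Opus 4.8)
The plan is to introduce the single parameter $t := 2-\alpha$ and reduce everything to the quadratic $g(x) := x^2 - tx + 1$ over $\F_q$. First I would record the identity $t^2 - 4 = (2-\alpha)^2 - 4 = \alpha(\alpha-4)$. Since $\alpha\in\F_q^{\times 2}\setminus\{4\}$, both $\alpha$ and $\alpha-4$ are nonzero and $\alpha$ is a square, so $\alpha(\alpha-4)$ is a nonzero square exactly when $\alpha-4$ is a square and a nonsquare exactly when $\alpha-4$ is a nonsquare; moreover $t\neq\pm2$, since $t=2$ would force $\alpha=0$ and $t=-2$ would force $\alpha=4$. Hence $g$ has two distinct reciprocal roots $\lambda,\lambda^{-1}\neq\pm1$: they lie in $\F_q^\times$ (so equal $\gamma^{\pm i}$) when $\alpha-4$ is a square, and in $\F_{q^2}\setminus\F_q$ when $\alpha-4$ is a nonsquare, in which case they are Galois-conjugate with product $1$, forcing $\lambda^q=\lambda^{-1}$ and thus $\lambda\in\langle\eta\rangle$ (so $\lambda=\eta^{\pm j}$). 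The parity of $i$ (resp.\ of $j$) is well-defined because $q-1$ (resp.\ $q+1$) is even. The computation that drives the parity claims is $(\lambda-1)(\lambda^{-1}-1) = 2-(\lambda+\lambda^{-1}) = 2-t = \alpha$, equivalently $\lambda = -(\lambda-1)^2/\alpha$.

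For the split case: from $\lambda=-(\lambda-1)^2/\alpha$ and $\alpha\in\F_q^{\times 2}$ one sees that $\lambda$ is a square in $\F_q^\times$ iff $-1$ is a square in $\F_q$, i.e.\ iff $q\equiv1\pmod{4}$; since $\gamma$ generates $\F_q^\times$, this says $2\mid i$ exactly when $q\equiv1\pmod{4}$, matching the parity asserted in both (i) and (ii). For the nonsplit case I would first note $\alpha = (\lambda-1)(\lambda^q-1) = N_{\F_{q^2}/\F_q}(\lambda-1)$, and recall that the norm $\F_{q^2}^\times\to\F_q^\times$ sends squares to squares and nonsquares to nonsquares (indeed $N(\theta^k)=\gamma^k$); hence $\alpha$ being a square forces $\lambda-1$ to be a square in $\F_{q^2}^\times$, so $(\lambda-1)^{(q^2-1)/2}=1$. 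Then $\lambda = -(\lambda-1)^2\alpha^{-1} = -(\lambda-1)^{2-(q+1)} = -(\lambda-1)^{1-q}$ gives $\lambda^{(q+1)/2} = (-1)^{(q+1)/2}(\lambda-1)^{-(q^2-1)/2} = (-1)^{(q+1)/2}$, which is $1$ iff $q\equiv3\pmod{4}$. As $\langle\eta\rangle$ is cyclic of even order $q+1$, $\lambda$ is a square in it iff $\lambda^{(q+1)/2}=1$, so $2\mid j$ exactly when $q\equiv3\pmod{4}$, again matching (i) and (ii).

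It remains to count. As $\alpha$ ranges over $\F_q^{\times 2}\setminus\{4\}$, write $\alpha=4\beta$ (allowed since $4$ is a square); then $\alpha\in\F_q^{\times 2}\setminus\{4\}$ corresponds to $\beta\in\F_q^{\times 2}\setminus\{1\}$, and $\alpha-4\in\F_q^{\times 2}$ to $\beta-1\in\F_q^{\times 2}$. So the number of $\alpha$ falling in the split ($\gamma^i$) case equals the number of solutions of $x+1=y$ with $(x,y)=(\beta-1,\beta)\in\F_q^{\times 2}\times\F_q^{\times 2}$, which by Lemma~\ref{lem:sq1} is $(q-5)/4$ if $q\equiv1\pmod{4}$ and $(q-3)/4$ if $q\equiv3\pmod{4}$. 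Subtracting from the total $|\F_q^{\times 2}\setminus\{4\}| = (q-1)/2-1 = (q-3)/2$ leaves the nonsplit ($\eta^j$) count: $(q-1)/4$ if $q\equiv1\pmod{4}$ and $(q-3)/4$ if $q\equiv3\pmod{4}$. I expect the only genuinely delicate point to be the nonsplit parity computation, where one must carefully distinguish ``square in $\F_{q^2}^\times$'' from ``square in the torus $\langle\eta\rangle$''; everything else is bookkeeping plus the direct appeal to Lemma~\ref{lem:sq1}.
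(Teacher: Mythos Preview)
Your argument is correct and structurally parallel to the paper's: both reduce to the roots of $x^2-(2-\alpha)x+1$ and exploit the identity $(\lambda-1)^2=-\lambda\alpha$ to pin down parity. There are two minor differences. For the nonsplit parity, you pass through the norm $N_{\F_{q^2}/\F_q}$ to conclude $(\lambda-1)^{(q^2-1)/2}=1$ and then evaluate $\lambda^{(q+1)/2}$; the paper instead computes $(-\alpha)^{(q-1)/2}$ directly as a telescoping quotient $(\lambda-1)^{q-1}/\lambda^{(q-1)/2}=-1/\lambda^{(q+1)/2}$, avoiding any mention of squares in $\F_{q^2}^\times$. For the counts, you substitute $\alpha=4\beta$ and invoke Lemma~\ref{lem:sq1} to count the split $\alpha$, then subtract; the paper instead reads the split count straight from the parity constraint on $i$ (the admissible $\lambda$ are exactly the squares, respectively nonsquares, in $\F_q^\times$ avoiding $\pm1$), so it never calls Lemma~\ref{lem:sq1} here. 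Your route has the virtue of making the link to Lemma~\ref{lem:sq1} explicit; the paper's is slightly more self-contained at this spot.
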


\begin{proof}
Consider the element $X=\begin{pmatrix}1-\alpha & 1\\-\alpha & 1\end{pmatrix} \in \SLtwoq$ with trace $2-\alpha$. It is clear that  neither $1$ nor $-1$ 
is not an eigenvalue of $X$. Hence $X$ has two distinct eigenvalues $\lambda$ and $\lambda^{-1}$, whence 
\begin{equation}\label{eq:sq20}
  2-\alpha = \lambda+\lambda^{-1}.
\end{equation}  
Since $X \in \SLtwoq$, we have $\lambda^q = \lambda$ or $\lambda^q=-\lambda^{-1}$, and the total number of these two occurrences is $(q-3)/2$.

In the first case, we have $\lambda=\gamma^i \neq \pm 1$. Furthermore, \eqref{eq:sq20} implies $(\lambda-1)^2 = -\lambda\alpha$, showing 
that $-\gamma^i=-\lambda \in \F_q^{\times 2}$. If $q \equiv 1 \pmod4$, then $\pm 1 \neq \gamma^i \in \F_q^{\times 2}$, showing
$2|i$ and leaving $(q-5)/2$ possibilities for $\lambda$ and $(q-5)/4$ possibilities for $\alpha$. If $q \equiv 3 \pmod4$, then $-1 \neq \gamma^i$ is a non-square in 
$\F_q^\times$, showing $2 \nmid i$ and leaving $(q-3)/2$ possibilities for $\lambda$ and $(q-3)/4$ possibilities for $\alpha$ in this case.

In the second case, $\lambda=\eta^j \neq \pm 1$ and $\lambda^{(q+1)/2} = (-1)^j$. As before, $(\lambda-1)^2/\lambda = -\alpha$, whence 
$$\begin{aligned}\kappa  = (-\alpha)^{(q-1)/2} & = \frac{(\lambda-1)^{q-1}}{\lambda^{(q-1)/2}}=  \frac{(\lambda-1)^{q}}{\lambda^{(q-1)/2}(\lambda-1)}
    =  \frac{\lambda^q-1}{\lambda^{(q-1)/2}(\lambda-1)}\\
    &  = \frac{\lambda^{-1}-1}{\lambda^{(q-1)/2}(\lambda-1)} = \frac{-1}{\lambda^{(q+1)/2}}  = \frac{-1}{(-1)^j},\end{aligned}$$
and thus $(-1)^j = -\kappa$. Hence $2 \nmid j$ if $q \equiv 1 \pmod4$, and $2 \mid j$ if $q \equiv 3 \pmod4$.  
\end{proof}

Unlike $\GLnq$, with $2 \nmid q$ symplectic transvections in $\Spnq$ break into two conjugacy classes, say $C$ and $C^*$. We will work with
one of them, say $C$, which may be assumed to
be consisting of 
$$T(\alpha,v): x  \mapsto x + \alpha(v|x)v,$$
where $0 \neq v \in V$ and $\alpha \in \F_q^{\times 2}$. As before, the map $(\alpha,v) \mapsto T(\alpha,v)$ is $(q-1)$ to $1$, and 
$$|C|=(q^{2n}-1)/2.$$

\smallskip
We will now determine the probability that the product $S:=T(\alpha,u)T(\beta,v)$ of two random transvections from $C$ belongs to a fixed conjugacy class in $\Spnq$. 

\smallskip
First we consider the case $S$ has rank $\leq 1$. As mentioned above, this can only happen when $u$ and $v$ are linearly dependent. Rescaling $u$, which leads to 
a factor $q-1$ in counting the number of possibilities, we may assume $u=v$. Then $S=T(\alpha+\beta,v)$. In particular, $S$ can be the identity exactly when $\beta=-\alpha$ for 
$\alpha,\beta \in \F_q^{\times 2}$. Hence the probability that $S$ is the identity is $1/|C|$ if $q \equiv 1 \pmod4$ and $0$ otherwise. 

Suppose $q \equiv 1 \pmod 4$. By Lemma \ref{lem:sq1}, among the $(q-1)^2/4$ pairs $(\alpha,\beta) \in \F_q^{\times 2} \times \F_q^{\times 2}$,  $(q-1)(q-5)/8$
of them have $\alpha+\beta \in \F_q^{\times 2}$, and hence $(q-1)^2/8$
of them have $\alpha+\beta$ being a non-square in $\F_q^{\times}$. Hence the probability that $S$ belongs to $C$,  respectively $C^*$, is 
$$\frac{(q-5)/2}{q^{2n}-1}, \mbox{ resp. }\frac{(q-1)/2}{q^{2n}-1}.$$ 

Suppose $q \equiv 3 \pmod 4$. By Lemma \ref{lem:sq1}, among the $(q-1)^2/4$ pairs $(\alpha,\beta) \in \F_q^{\times 2} \times \F_q^{\times 2}$,  $(q-1)(q-3)/8$
of them have $\alpha+\beta \in \F_q^{\times 2}$, and hence $(q-1)(q+1)/8$
of them have $\alpha+\beta$ being a non-square in $\F_q^{\times}$. Hence the probability that $S$ belongs to $C$,  respectively $C^*$, is 
$$\frac{(q-3)/2}{q^{2n}-1}, \mbox{ resp. }\frac{(q+1)/2}{q^{2n}-1}.$$ 

Now we consider the case $S$ has rank $2$, i. e. $u$ and $v$ are linearly independent. Conjugating inside $\Spnq$, we may assume that 
$u,v \in V_1 = \F_q^4$, a non-degenerate subspace of $V = V_1 \oplus V_1^{\perp}$. It follows that $S$ acts trivially on $V_1^\perp$ and stabilizes $V_1$, and 
hence its conjugacy class is completely determined by the conjugacy class in $\Sp(V_1) \cong \Spfourq$ that contains $S|_{V_1}$. For the latter classes, we will follow
the notation in \cite{Sr}, and in fact will use that notation to denote the class of $S$ in $\Spnq$. In particular, $C$ can be denoted by $A_{21}$, and $C^*$ by
$A_{22}$. Another advantage of this labeling is that, since $\Sp(V_1)$ is a standard subgroup of $\Sp(V)$, we can choose a (reducible) Weil representation 
$$\omega=\omega_n$$ 
of degree $q^n$ of $\Spnq$ such that
\begin{equation}\label{eq:weil1}
  \omega(S) = q^{n-2}\omega_4(S|_{V_1}) = q^{n-2}(\theta_3-\theta_7)(S|_{V_1}),
\end{equation}
where $\theta_3$ and $\theta_7$ are listed on pp. 523--524 of \cite{Sr}: $\theta_3$ is irreducible of degree $(q^2+1)/2$, and $-\theta_7$ is irreducible of 
degree $(q^2-1)/2$.  

First suppose that $(u|v)=0$ (but $v \notin \langle u \rangle_{\F_q}$). The number of possibilities for such $(\alpha,u,\beta,v)$ is 
$$(q^{2n}-1)(q^{2n-1}-q)(q-1)^2/4.$$
We may assume that $V_1$ has a basis $(u_1=u,v_1=v,u_2,v_2)$ in which $(\cdot|\cdot)$ has Gram matrix $\begin{pmatrix}0 & I_2\\-I_2 & 0 \end{pmatrix}$. The matrix of
$S|_{V_1}$ in this basis is $\begin{pmatrix}I_2 & X\\0 & I_2\end{pmatrix}$, where $X := \mathrm{diag}(\alpha,\beta)$. If $q \equiv 1 \pmod 4$, then the 
quadratic form $(x,y) \mapsto \alpha x^2 +\beta y^2$ is isotropic, and hence we see (using the conjugation action of the Levi subgroup 
$$\mathrm{Stab}_{\Sp(V_1)}\bigl(\langle u_1,v_1\rangle_{\F_q}, \langle u_2,v_2\rangle_{\F_q}\bigr) \cong \GLtwoq$$
of $\Sp(V_1)$) that $S$ belongs to class $A_{31}$, one of the two classes of double transvections in \cite{Sr}. If $q \equiv 3 \pmod 4$, then the 
quadratic form $(x,y) \mapsto \alpha x^2 +\beta y^2$ is anisotropic, and hence $S$ belongs to class $A_{32}$, the other class of double transvections in \cite{Sr}. 
Thus the probability that $S$ belongs to class $A_{31}$, respectively $A_{32}$, is 
$$\frac{q^{2n-1}-q}{q^{2n}-1}, \mbox{ resp. }0,$$
when $q \equiv 1 \pmod4$, and  
$$0, \mbox{ resp. }\frac{q^{2n-1}-q}{q^{2n}-1},$$
when $q \equiv 3 \pmod4$.

Now assume that $(u|v) \neq 0$. Rescaling $u$, which leads to a factor $q-1$ in counting the number of possibilities, we may assume $(u|v)=1$. 
For a fixed $0 \neq u \in V$, the number of such $v$ is $(q^{2n}-q^{2n-1})/(q-1) = q^{2n-1}$.
We may assume that $V_1$ has a basis $(u,v,u',v')$ in which $(\cdot|\cdot)$ has Gram matrix $\mathrm{diag}(\Gamma,\Gamma)$, 
where $\Gamma=\begin{pmatrix}0 & 1\\-1 & 0 \end{pmatrix}$. The matrix of
$S|_{V_1}$ in this basis is $\mathrm{diag}(Y,I_2)$, where $Y:=\begin{pmatrix}1-\alpha\beta & \alpha \\ -\beta & 1\end{pmatrix}$.

Consider the case $\alpha\beta=4$. Then $-Y =  \begin{pmatrix}3 & -\alpha \\ 4/\alpha & -1\end{pmatrix}$ is conjugated by the element
$\begin{pmatrix}\sqrt{\alpha} & 0\\ 2/\sqrt{\alpha} & 1/\sqrt{\alpha}\end{pmatrix} \in \Sp_2(q)$ to $\begin{pmatrix}1 & -1\\0 & 1\end{pmatrix}$. Hence
$S$ belongs to class $D_{21}$ in \cite{Sr} if $q \equiv 1 \pmod4$, and class $D_{22}$ if $q \equiv 1 \pmod4$, and the probability of this event is
$$\frac{2q^{2n-1}}{q^{2n}-1}.$$

Now we consider the case $\alpha\beta \neq 4$. Then $Y \in \Sp_2(q)$ has trace $2-\alpha\beta \neq -2$. By Lemma \ref{lem:sq2}, $Y$ is conjugate to
$\mathrm{diag}(\gamma^i,\gamma^{-i})$ and then $S$ belongs to class $C_3(i)$ in \cite{Sr}, or to 
$\mathrm{diag}(\eta^j,\eta^{-j})$ and then $S$ belongs to class $C_1(j)$ in \cite{Sr}, each possibility completely determined by $\alpha\beta$.
More precisely, when $q \equiv 1 \pmod 4$, $S$ belongs to 
$(q-5)/4$ classes $C_3(i)$, all with $2|i$, and $(q-1)/4$ classes $C_1(j)$, all with $2 \nmid j$, and the probability that $S$ belongs to each of these classes is 
$$\frac{2q^{2n-1}}{q^{2n}-1}.$$
When $q \equiv e \pmod 4$, $S$ belongs to 
$(q-3)/4$ classes $C_3(i)$, all with $2 \nmid i$, and $(q-3)/4$ classes $C_1(j)$, all with $2 \mid j$, and the probability that $S$ belongs to each of these classes is 
$$\frac{2q^{2n-1}}{q^{2n}-1}.$$

\smallskip
Using \eqref{eq:weil1} and \cite{Sr}, we can list the values of $\omega(S)$ as
\begin{equation}\label{eq:weil2}
  -q^{n-1}\kappa\sqrt{\kappa q},~q^{n-1}\kappa\sqrt{\kappa q},~q^{n-1},~-q^{n-1},~-(-1)^jq^{n-1},~(-1)^iq^{n-1},~\kappa q^{n-1}, \mbox{ resp. } \kappa q^{n-1},
\end{equation}  
when $S$ belongs to class $C=A_{21}$, $C^*=A_{22}$, $A_{31}$, $A_{32}$, $C_1(j)$, $C_3(i)$, $D_{21}$, or $D_{22}$, respectively. Pairing these values with
the above determined probabilities, we obtain the following main result of this section:

\begin{theorem}\label{sp-odd-prob1}
Let $q$ be any odd prime power, $n \geq 2$, and let $\omega$ denote a reducible Weil character of degree $q^n$ of $\Spnq$.
Fix a conjugacy class $C$ of transvections in $\Spnq$, and 
for any conjugacy class $T$ of $\Spnq$, let $p_2(T)$ be the probability that the product of two random elements from $C$ belongs to $T$. Then 
for any integer $r \geq 0$, 
$$\sum_{T}\omega(T)^r p_2(T)$$
is equal to
$$\frac{q^{(n-2)r}}{q^{2n}-1} \cdot\Bigl( 2q^{2r}+\frac{q-1}{2}q^{3r/2}+(-1)^r\frac{q-5}{2}q^{3r/2}+q^r(q^{2n}-q) \Bigr)$$
if $q \equiv 1 \pmod4$, and 
$$\frac{q^{(n-2)r}}{q^{2n}-1} \cdot\Bigl( (-1)^r\frac{q-3}{2}(-q)^{3r/2}+\frac{q+1}{2}(-q)^{3r/2}+(-q)^r(q^{2n}-q) \Bigr)$$
if $q \equiv 3 \pmod4$ (and with a suitable choice of $\sqrt{-q}$).
\end{theorem}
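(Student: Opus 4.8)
The plan is to read the answer directly off the case analysis preceding the statement. That analysis already produces, for every conjugacy class $T$ of $\Spnq$ with $p_2(T)\neq 0$, both the probability $p_2(T)$ and the Weil value $\omega(T)$ (the latter via \eqref{eq:weil1} and the list \eqref{eq:weil2}), so it only remains to form $\sum_T \omega(T)^r p_2(T)$ and simplify. The classes that occur are exactly: the identity (only when $q\equiv 1\pmod 4$), with $\omega(\Id)^r = q^{nr}$; the two transvection classes $C = A_{21}$ and $C^* = A_{22}$, with $\omega$-values $\mp q^{n-1}\kappa\sqrt{\kappa q}$; one of the two ``double transvection'' classes $A_{31}$ ($q\equiv 1$) or $A_{32}$ ($q\equiv 3$), with $\omega$-value $\pm q^{n-1}$; one of the classes $D_{21}$ ($q\equiv 1$) or $D_{22}$ ($q\equiv 3$), with $\omega$-value $\kappa q^{n-1}$; and the two families $\{C_3(i)\}$ and $\{C_1(j)\}$ coming from the case $\alpha\beta\neq 4$, each individual class occurring with the same probability $2q^{2n-1}/(q^{2n}-1)$. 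I would also record that $\kappa = 1$ if $q\equiv 1 \pmod 4$ and $\kappa=-1$ if $q\equiv 3\pmod 4$, so $\sqrt{\kappa q}$ is $\sqrt q$ in the first case and a fixed square root of $-q$ in the second.

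The single observation that makes the $C_1(j)$/$C_3(i)$ contributions tractable is that, although there are $(q-5)/4 + (q-1)/4$ (resp. $(q-3)/4 + (q-3)/4$) such classes, by \eqref{eq:weil2} one has $\omega(C_3(i)) = (-1)^i q^{n-1}$ with $2\mid i$ forced when $q\equiv 1\pmod 4$ and $2\nmid i$ forced when $q\equiv 3\pmod 4$, and $\omega(C_1(j)) = -(-1)^j q^{n-1}$ with $2\nmid j$ (resp. $2\mid j$) forced. Hence every one of these classes contributes the same value $\omega(T) = q^{n-1}$ when $q\equiv 1\pmod 4$ and $\omega(T) = -q^{n-1}$ when $q\equiv 3\pmod 4$; together with $A_{31}$ (or $A_{32}$) and $D_{21}$ (or $D_{22}$), which also have $\omega$-value $\pm q^{n-1}$, these produce terms with the common $q$-power $q^{(n-1)r} = q^{(n-2)r}q^r$, the sign being absorbed into $(-q)^r$ when $q\equiv 3 \pmod 4$.

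Next I would factor $q^{(n-2)r}/(q^{2n}-1)$ out of the whole sum and collect terms by their power of $q$: the identity gives the $2q^{2r}$ term; $A_{21}$ and $A_{22}$, whose probabilities are the ones computed from Lemma \ref{lem:sq1}, give the two $q^{3r/2}$ (resp. $(-q)^{3r/2}$) terms, with coefficients $\tfrac{q-1}{2}$ and $(-1)^r\tfrac{q-5}{2}$ (resp. $\tfrac{q+1}{2}$ and $(-1)^r\tfrac{q-3}{2}$); and $A_{31}$ (or $A_{32}$), $D_{21}$ (or $D_{22}$), and the $C_3(i), C_1(j)$ families all contribute terms of weight $q^r$ (resp. $(-q)^r$). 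The only genuine computation is the collapse of that last coefficient: for $q\equiv 1\pmod 4$ one checks $q^{2n-1}\bigl(1 + 2 + \tfrac{q-5}{2} + \tfrac{q-1}{2}\bigr) - q = q^{2n} - q$, and for $q\equiv 3\pmod 4$ the analogue $q^{2n-1}\bigl(1 + 2 + \tfrac{q-3}{2} + \tfrac{q-3}{2}\bigr) - q = q^{2n}-q$; in both cases the parenthesized sum is exactly $q$, which is the arithmetic heart of the theorem.

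There is no deep obstacle in this theorem beyond bookkeeping, since the hard work (classifying which classes arise and computing $p_2(T)$ and $\omega(T)$, via Lemmas \ref{lem:sq1} and \ref{lem:sq2} and the tables of \cite{Sr}) is already done. The error-prone points are keeping track of the sign $\kappa$, of the parity constraints on $i,j$ that make the $C_1,C_3$ contributions uniform, and of the conversions among $q^r$, $(-q)^r$, $(-q)^{r/2}$ and $(-q)^{3r/2}$ when $q\equiv 3\pmod 4$ (where $r$ need not be even); this last point is exactly why the statement allows a suitable choice of $\sqrt{-q}$. As a final consistency check I would set $r = 0$, where the formula must reduce to $\sum_T p_2(T) = 1$; indeed $\tfrac{1}{q^{2n}-1}\bigl(2 + \tfrac{q-1}{2} + \tfrac{q-5}{2} + q^{2n}-q\bigr) = 1$ when $q\equiv 1\pmod 4$, and similarly when $q\equiv 3\pmod 4$.
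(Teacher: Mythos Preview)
Your proposal is correct and is exactly the paper's approach: the paper concludes its case analysis with ``Pairing these values with the above determined probabilities, we obtain the following main result of this section'' and states Theorem~\ref{sp-odd-prob1} without further detail, so you have simply spelled out that pairing. Your arithmetic (the collapse of the $q^r$ and $(-q)^r$ coefficients to $q^{2n}-q$, and the $r=0$ sanity check) is correct.
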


We will also need the following variant of Theorem \ref{sp-odd-prob1}:

\begin{theorem}\label{sp-odd-prob2}
Let $q$ be any odd prime power, $n \geq 2$, and let $\omega$ denote a reducible Weil character of degree $q^n$ of $\Spnq$.
For any conjugacy class $T$ of $\Spnq$, let $\tilde p_2(T)$ be the probability that the product of two random transvections belongs to $T$. Then 
for any even integer $r \geq 0$, 
$$\sum_{T}\omega(T)^r \tilde p_2(T) = \frac{q^{(n-2)r}}{q^{2n}-1} \cdot\Bigl( q^{2r}+(q-2)\kappa^{r/2}q^{3r/2}+q^r(q^{2n}-q) \Bigr),$$
where $\kappa:=(-1)^{(q-1)/2}$.
\end{theorem}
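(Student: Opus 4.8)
The plan is to derive Theorem~\ref{sp-odd-prob2} from the geometric count already recorded in Theorem~\ref{sp-transv}, combined with the list of Weil values in \eqref{eq:weil2}. The main point is that when $r$ is even the quantity $\omega(T)^r$ depends only on the codimension of the fixed space of $T$, so the refined conjugacy-class bookkeeping carried out for Theorem~\ref{sp-odd-prob1} collapses to three cases.

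First I would record the parametrization: a uniformly random transvection of $\Spnq$ is $T(\alpha,v)$ with $(\alpha,v)$ uniform in $\F_q^\times\times(V\setminus\{0\})$, the map $(\alpha,v)\mapsto T(\alpha,v)$ being $(q-1)$-to-$1$ onto the full set of transvections (across both classes $C$ and $C^*$). Thus the product $S:=T_1T_2$ of two independent random transvections is exactly the element counted in Section~\ref{sec:sp1}, and by Theorem~\ref{sp-transv} the total probability that $S$ has fixed space of codimension $0$, $1$, resp.\ $2$ is
\[ \frac{1}{q^{2n}-1},\qquad \frac{q-2}{q^{2n}-1},\qquad \frac{q^{2n}-q}{q^{2n}-1}.\]
Here codimension $0$ forces $S=I$; codimension $1$ forces $S$ to be a symplectic transvection, i.e.\ $S\in C\cup C^*$; and codimension $2$ accounts for the classes $A_{31},A_{32},C_1(j),C_3(i),D_{21},D_{22}$ of Section~\ref{sec:sp1}.

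Next I would evaluate $\omega(T)^r$ on each of these three strata using \eqref{eq:weil2} (together, if desired, with Howe's theorem $|\omega(g)|=q^{\frac12\dim(\ker(g-1))}$ to fix the modulus and \eqref{eq:weil2} to fix the sign). One gets $\omega(I)^r=q^{nr}$; for a transvection $\omega(T)^2=\bigl(\pm q^{n-1}\kappa\sqrt{\kappa q}\bigr)^2=\kappa q^{2n-1}$, with the \emph{same} value on $C$ and on $C^*$, hence $\omega(T)^r=(\kappa q^{2n-1})^{r/2}=\kappa^{r/2}q^{(2n-1)r/2}$ since $r$ is even; and on each codimension-$2$ class the value in \eqref{eq:weil2} is real of absolute value $q^{n-1}$, so $\omega(T)^r=q^{(n-1)r}$. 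In particular $\omega(T)^r$ is constant on each stratum.

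Finally, grouping the sum over $T$ by codimension and inserting the three probabilities above,
\[ \sum_T\omega(T)^r\tilde p_2(T)=\frac{q^{nr}}{q^{2n}-1}+\frac{(q-2)\kappa^{r/2}q^{(2n-1)r/2}}{q^{2n}-1}+\frac{q^{(n-1)r}(q^{2n}-q)}{q^{2n}-1},\]
and pulling out the common factor $q^{(n-2)r}/(q^{2n}-1)$ produces exactly the asserted formula. The only step requiring genuine care is the verification that $\omega(T)^2=\kappa q^{2n-1}$ holds identically on both transvection classes, so that the two classes may be merged; this is immediate from \eqref{eq:weil2}, where the two relevant values $\mp q^{n-1}\kappa\sqrt{\kappa q}$ differ only by a sign and $\kappa^{r/2}$ is unambiguous because $r$ is even. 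Everything else is a straightforward transcription of data already assembled, so I do not anticipate a real obstacle.
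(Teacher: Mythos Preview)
Your proposal is correct and follows essentially the same approach as the paper: both observe that for even $r$ the value $\omega(T)^r$ depends only on the codimension of the fixed space (via \eqref{eq:weil2}, giving $q^{nr}$, $\kappa^{r/2}q^{(2n-1)r/2}$, and $q^{(n-1)r}$ on the three strata), and then invoke Theorem~\ref{sp-transv} for the stratum probabilities. Your write-up is in fact slightly more explicit than the paper's, which dispatches the rank-$2$ case by a one-line reference back to the analysis in Theorem~\ref{sp-odd-prob1}.
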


\begin{proof}
Consider the product $S=T(\alpha,u)T(\beta,v)$ of two random transvections, i.e. we now assume only that 
$\alpha,\beta \in \F_q^\times$. If $S=\mathrm{Id}$, then $\omega(S)=q^n$. 
If $S$ has rank $1$, then 
$S$ is a transvection, and using \eqref{eq:weil1}, \eqref{eq:weil2} we have 
$$\omega(S)^2 = \kappa q^{2n-1}.$$ 
If $S$ has rank $2$, then the same analysis as in 
the proof of Theorem \ref{sp-odd-prob1} shows that $S$ belongs to one of the classes $A_{31}$, $A_{32}$, $C_1(j)$, $C_3(i)$, $D_{21}$, or $D_{22}$ (but now without
parity restrictions on $i$ and on $j$).
Again using \eqref{eq:weil1}, \eqref{eq:weil2}, we have 
$$\omega(S)^2 = q^{2n-2}.$$ 
Since $2|r$, the statement follows from Theorem \ref{sp-transv}.  
\end{proof}

\end{document}